\pdfoutput=1
\documentclass[letterpaper,11pt]{article}
\usepackage[margin=1in]{geometry}
\usepackage[
            CJKbookmarks=true,
            bookmarksnumbered=true,
            bookmarksopen=true,
            colorlinks=true,
            citecolor=red,
            linkcolor=blue,
            anchorcolor=red,
            urlcolor=blue
            ]{hyperref}

\usepackage[title]{appendix}
\usepackage{array}        
\usepackage{natbib}
\usepackage{graphicx}
\usepackage{subcaption}
\usepackage{booktabs} 

\usepackage{algorithm}
\usepackage{algorithmic}

\usepackage{amsmath,amsthm,amsfonts,amssymb}
\usepackage{hyperref}
\usepackage{color}
\usepackage{enumitem}
\usepackage{bm}
\usepackage{multirow}
\usepackage{bbm}
\usepackage{subfiles}
\usepackage{xspace}
\usepackage{lmodern}
\usepackage{caption}
\usepackage{mathtools} 
\usepackage{tikz}

\newtheorem{theorem}{Theorem}
\newtheorem{lemma}{Lemma}

\newtheorem{assumption}{Assumption}
\newtheorem{proposition}{Proposition}

\theoremstyle{definition}
\newtheorem{remark}{Remark}
\newtheorem{definition}{Definition}

\usetikzlibrary{intersections,calc,decorations.pathreplacing,through,arrows}
\usetikzlibrary{ decorations.markings,positioning}

\tikzset{global scale/.style={
    scale=#1,
    every node/.append style={scale=#1}
  }
}

\newcommand{\maC}{\mathcal{C}}

\newcommand{\maE}{\mathcal{E}}

\newcommand{\maG}{\mathcal{G}}

\newcommand{\maM}{\mathcal{M}}
\newcommand{\maN}{\mathcal{N}}

\newcommand{\maP}{\mathcal{P}}
\newcommand{\maQ}{\mathcal{Q}}

\newcommand{\maS}{\mathcal{S}}
\newcommand{\maT}{\mathcal{T}}

\newcommand{\prob}[1]{ \mathbb{P}\left[ #1 \right] }
\newcommand{\expect}[1]{ \mathbb{E}\left[ #1 \right] }

\newcommand{\pth}[1]{\left( #1 \right)}
\newcommand{\qth}[1]{\left[ #1 \right]}
\newcommand{\sth}[1]{\left\{ #1 \right\}}

\newcommand{\calN}{{\mathcal{N}}}

\newcommand{\calS}{{\mathcal{S}}}
\newcommand{\calT}{{\mathcal{T}}}

\newcommand{\ti}{\tilde}

\newcommand{\overlap}{\mathrm{overlap}}

\newcommand{\sfE}{{\mathsf{E}}}

\newcommand{\sfV}{{\mathsf{V}}}

\newcommand{\sfd}{{\mathsf{d}}}

\newcommand{\tr}{\mathrm{Tr}}

\newcommand{\indc}[1]{{\mathbf{1}_{\left\{{#1}\right\}}}}

\newcommand{\ER}{Erd\H{o}s-R\'enyi }
\DeclareMathOperator*{\argmax}{arg\,max}
\DeclareMathOperator*{\argmin}{arg\,min}

\def\var{\mathrm{Var}}
\def\E{\mathbb{E}}

\newcommand{\score}{S}

\title{
Attributed Network Alignment: Statistical Limits and Efficient Algorithm}
\author{Dong Huang, Chenyang Tian and Pengkun Yang\thanks{D.\ Huang and P.\ Yang are with the Department of Statistics and Data Science, Tsinghua University.
C. Tian is with Weiyang College, Tsinghua University.
P. Yang is supported in part by National Key R\&D Program of China 2024YFA1015800, 
Tsinghua University Dushi Program 2025Z11DSZ001, and High Performance Computing Center, Tsinghua University.}}
\begin{document}
\date{}
\maketitle
\begin{abstract}
This paper studies the problem of recovering a hidden vertex correspondence between two correlated graphs when both edge weights and node features are observed. While most existing work on graph alignment relies primarily on edge information, many real-world applications provide informative node features in addition to graph topology. To capture this setting, we introduce the featured correlated Gaussian Wigner model, where two graphs are coupled through an unknown vertex permutation, and the node features are correlated under the same permutation. We characterize the optimal information-theoretic thresholds for exact recovery and partial recovery of the latent mapping. On the algorithmic side, we propose QPAlign, an algorithm based on a quadratic programming relaxation, and demonstrate its strong empirical performance on both synthetic and real datasets. Moreover, we also derive theoretical guarantees for the proposed procedure, supporting its reliability and providing convergence guarantees.
\end{abstract}

\begin{keywords}%
  {Graph alignments, information-theoretic threshold, algorithm, attributed network}
\end{keywords}

\tableofcontents

\section{Introduction}

Graph alignment is a fundamental problem in network science and machine learning, with applications in many areas. For example, in computer vision, 3-D shapes can be represented as graphs and a significant problem for pattern recognition and image processing is determining whether two graphs represent the same object under rotations~\cite{berg2005shape,mateus2008articulated}; in natural language processing, each sentence can be represented as a graph and the ontology problem refers to uncovering the correlation between different knowledge graphs that are in different languages~\cite{haghighi2005robust}; in computational biology, proteins can be regarded as vertices and the interactions between them can be formulated as weighted edges~\cite{singh2008global, vogelstein2015fast}.

Since real-world scenarios often present challenges due to the noise in real data, many studies focused on random graph models to serve as a pivotal step, including graph alignment problem in \ER model~\cite{wu2022settling,ding2023matching,huang2024information}, Gaussian Wigner model~\cite{fan2019spectral,araya2024seeded,ding2024polynomial}, stochastic block model~\cite{onaran2016optimal,lyzinski2018information,chai2024efficient}, and graphon model~\cite{zhang2018consistent}.
However, previous works on random graph alignment mainly focus on models that rely solely on topological information. In real scenarios, however, feature information often plays a crucial role. For instance, in the ACM–DBLP dataset, node features such as paper titles or author names are essential for identifying corresponding entities across the two graphs~\cite{tang2023robust,bommakanti2024fugal}.
This motivates the study of alignment models that incorporate both structural and feature information, beyond purely topology-based settings. 

{
While existing work on attributed graph alignment mostly builds on correlated \ER or community-based model with binary edges and node attributes~\cite{zhang2024attributed,yang2024exact,yang2025exact}, many real-world networks are both weighted and attributed. For example, in gene co-expression networks~\cite{zhang2005general}, edge weights encode co-expression strength while genes carry functional annotations, and in social networks~\cite{leskovec2010predicting}, edges record rating or interaction strengths while nodes have profile or content features.
}
To bridge this gap, we investigate the following featured correlated Gaussian Wigner model, in which the random graphs are generated from Gaussian distributions.
\begin{definition}[Featured correlated Gaussian Wigner model]
{ Let $G_1$ and $G_2$ be two weighted random graphs with vertex sets $V(G_1),V(G_2)$ such that $|V(G_1)| = |V(G_2)| = n$.}
    Let $\pi^*$ denote a latent bijective mapping from $V(G_1)$ to $V(G_2)$. We say that a pair of graphs $(G_1,G_2)$ follows featured correlated Gaussian Wigner model $\maG(n,d,\rho,r)$ if \begin{enumerate}
        \item each pair of weighted edges $\beta_{uv}(G_1)$ and $\beta_{\pi^*(u)\pi^*(v)}(G_2)$ for any $u,v\in V(G_1)$ are correlated standard normals with correlation $\rho\in (0,1)$;
        \item  each pair of features $(\bm{x}_u,\bm{y}_{\pi^*(u)})$ for any $u\in V(G_1)$ follows multivariate normal distribution $\maN(0,\Sigma_d)$ with $\Sigma_d=\begin{bmatrix}
I_d & rI_d \\
rI_d & I_d
\end{bmatrix}$, where the dimension $d\in \mathbb{N}$ and the correlation $r\in (0,1)$. Moreover, we assume that the features are independent with the weighted edges.
    \end{enumerate}
\end{definition}
We assume features are standardized so that each coordinate has unit variance and is independent, which justifies the identity matrices in $\Sigma_d$. 
Edge weights are likewise centered and variance-normalized, with correlations $\rho$ and $r$ capturing structural and feature dependence, respectively.
{Furthermore, we assume $\rho,r\in(0,1)$ without loss of generality, since the model is invariant under flipping the signs of all edge weights or node features in $G_2$, and hence negative correlations can be reduced to positive ones.}
Indeed, this model bridges two significant extremes, it reduces to correlated Gaussian Wigner model~\cite{ding2021efficient} when $r=0$ and correlated Gaussian database model~\cite{dai2019database} when $\rho = 0$. Given $G_1$ and $G_2$ under $\maG(n,d,\rho,r)$, our goal is to recover the latent vertex mapping $\pi^*$. 
Specifically, given two permutations $\pi^*,\hat{\pi}:V(G_1)\mapsto V(G_2)$, denote the fraction of their overlap by $\overlap(\pi^*,\hat{\pi}) = \tfrac{1}{n}\vert\sth{ v\in V(G_1):\pi^*(v) = \hat{\pi}(v)}\vert$.
To quantify the performance of an estimator $\hat{\pi}$, we say $\hat{\pi}(G_1,G_2)$ achieves \begin{itemize}
    \item \emph{partial recovery}, if $\overlap(\hat{\pi},\pi^*)\ge \delta$ for  $\delta\in (0,1)$;
    \item \emph{exact recovery}, if $\overlap(\hat{\pi},\pi^*)=1$.
\end{itemize}

\subsection{Main Results}\label{subsec:main-results}



In this subsection, we present our main results on information-theoretic thresholds. Let $\maS_n$ denote the set of bijective mappings $\pi:V(G_1)\mapsto V(G_2)$. Our goal is to determine the correlation required for successful recovery of $\pi^*$. Next, we introduce our main theorems.

\begin{theorem}[Partial Recovery]\label{thm:main-partial}
Under featured correlated Gaussian Wigner model, if $d=\omega(\log n)$ and  $n\log (\tfrac1{1-\rho^2})+2d\log (\tfrac1{1-r^2})\ge (4+\epsilon)\log n$ for some constant $\epsilon>0$, then there exists an estimator $\hat\pi$ such that, for any fixed constant $0<\delta<1$ and $\pi^*\in \maS_n$, we have 
    $$\prob{\mathrm{overlap}(\hat \pi,\pi^*)\ge\delta}=1-o(1).$$
    Conversely, for any constant $0<\delta<1$, if $n\log (\frac1{1-\rho^2})+2d\log (\frac1{1-r^2})\le c\log n$ for some constant $c$, then for any estimator $\hat\pi$,
    $$\prob{\mathrm{overlap}(\hat \pi, \pi^*)<\delta}\ge 1-\frac{c}{4\delta},$$
    where $\pi^*$ is uniformly distributed over $\maS_n$.
\end{theorem}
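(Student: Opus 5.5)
Both directions reduce to mutual information bounds and to first-moment estimates for the likelihood-based score. For the achievability part I will use the maximum likelihood estimator. Because $\pi^*$ is uniform and the joint density factorizes, the MLE maximizes
\[
\score(\pi)=\frac{\rho}{1-\rho^2}\sum_{u<v}\beta_{uv}(G_1)\beta_{\pi(u)\pi(v)}(G_2)+\frac{r}{1-r^2}\sum_{u}\langle \bm x_u,\bm y_{\pi(u)}\rangle .
\]
I will not try to show the MLE is exact. Instead I will show that, with high probability, every $\pi$ with $\overlap(\pi,\pi^*)<\delta$ satisfies $\score(\pi)<\score(\pi^*)$. By a union bound over permutations with exactly $k=n(1-\overlap)$ mismatched vertices, of which there are at most $\binom{n}{k}k!\le n^{k}$, it suffices to bound $\prob{\score(\pi)\ge \score(\pi^*)}$ by roughly $\exp(-k(1+\epsilon')\log n)$ for $k\ge(1-\delta)n$.

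The key step is a Chernoff bound on $\score(\pi)-\score(\pi^*)$. I will use the cycle decomposition of $\pi^{-1}\circ\pi^*$, acting both on vertices (for features) and on the induced permutation of vertex pairs (for edges). For each orbit of length $\ell$, the moment generating function of the orbit's contribution to the difference factorizes. This is the standard computation in the Gaussian Wigner literature (e.g.\ the cycle-decomposition MGF bounds of Wu--Xu--Yu), and for an $\ell$-cycle it gives $\E[e^{t(\cdot)}]\le \lambda^{\ell}$-type bounds. With the optimal $t$, the per-element factor for the Gaussian pairs is essentially $(1-\rho^2)^{1/2}$ per edge and $(1-r^2)^{d/2}$ per vertex feature; fixed points contribute nothing.

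When $k$ vertices are moved, roughly $kn/2$ edge pairs are moved and $k$ vertices are moved, each carrying $d$ coordinates. The tail therefore becomes
\[
\exp\Big(-\tfrac{k}{4}\big[n\log\tfrac1{1-\rho^2}+2d\log\tfrac1{1-r^2}\big](1-o(1))\Big).
\]
The constant $1/4$ versus $1/2$ has to be tracked carefully through the optimal $t$ and the number of moved edges, which is about $k(n-k/2)$ and at least $kn/2$. Under the hypothesis with $4+\epsilon$, this beats $n^{k}$.

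I expect the main obstacle to be controlling short cycles, especially transpositions and $2$-cycles in the edge orbits. Edge orbits of length $1$ or $2$ that come from fixed-point pairs or swapped pairs have worse MGFs. For edges, the count of these bad orbits is $O(k^2+n)$ against $kn$, and this is negligible when $k\ge(1-\delta)n$ only up to constants, so I will need care. For features, the hypothesis $d=\omega(\log n)$ is exactly what absorbs the lower-order terms: the $2$-cycle feature bound loses a factor like $(1-r^2)^{-d/2\cdot o(1)}$ or a polynomial prefactor $\mathrm{poly}(d)$, and these terms are $o(\log n)$ per vertex only because $d\gg\log n$. If the pure Chernoff argument is too lossy for small cycles, I will truncate instead: first show $\score(\pi^*)$ concentrates above its mean, and then bound $\score(\pi)$ for mismatched $\pi$ via Hanson--Wright.

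For the converse, I will apply Fano's inequality in its partial-recovery form. The bound $I(\pi^*;G_1,G_2)\le \E\,D(P_{G_1,G_2\mid\pi^*}\|P_{G_1}\otimes P_{G_2})$ is computed exactly. Correlated Gaussian pairs with correlation $\rho$ give $\frac12\log\frac1{1-\rho^2}$ each. Summing over $\binom n2$ edges and $nd$ feature coordinates gives
\[
I\le \frac n4\Big(n\log\tfrac1{1-\rho^2}+2d\log\tfrac1{1-r^2}\Big)\le \frac{c}{4}\,n\log n .
\]
The number of permutations within overlap $\delta$ of any fixed $\hat\pi$ is at most $\binom{n}{\delta n}(n-\delta n)!$, so $\log n!-\log(\text{ball})\approx \delta n\log n$. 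Fano then gives
\[
\prob{\overlap<\delta}\ge 1-\frac{I+\log 2}{\delta n\log n}\approx 1-\frac{c}{4\delta},
\]
with the lower-order terms absorbed.
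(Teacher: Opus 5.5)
Your converse is fine and is essentially the paper's argument: the KL bound against the product of the marginals (which do not depend on $\pi^*$) gives $I\le\binom n2\frac12\log\frac1{1-\rho^2}+\frac{nd}2\log\frac1{1-r^2}\le\frac c4 n\log n$, and the ball count is $n!/\pth{\binom{n}{\delta n}(n-\delta n)!}=(\delta n)!$.

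The achievability part has a genuine gap. The per-element factors you assert for the Chernoff bound on the pairwise difference $\score(\pi)-\score(\pi^*)$ are off by a factor of two in the exponent. The optimal choice is $t=\frac12$, and it gives only $\frac14\log\frac1{1-\rho^2}$ per moved edge and $\frac d4\log\frac1{1-r^2}$ per moved vertex. For a $2$-cycle the exact cumulant is $-\frac12\log\pth{1+\frac{\rho^2}{1-\rho^2}(4t-4t^2)}$; at $t=\frac12$ this is $-\frac12\log\frac1{1-\rho^2}$, shared by two edges, and long cycles have the same per-element rate. This is not Chernoff slack. The difference has mean $-m$ with $m=\rho\varphi(\rho)N_k+r\varphi(r)kd$ and variance $\approx 2m$, so each pairwise probability genuinely is $e^{-(1+o(1))m/4}$. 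Since $k=n$ must be excluded for every $\delta>0$ and $N_n=\binom n2$, your exponent there is only about $\frac n8\pth{n\log\frac1{1-\rho^2}+2d\log\frac1{1-r^2}}$. That beats $|\maT_n|\approx n!$ only when the bracket exceeds $8\log n$, not $4\log n$. For $\rho=0$ this reads $d\log\frac1{1-r^2}>4\log n$, which is the database exact-recovery threshold rather than the correct $2\log n$. The loss comes from union-bounding $n^k$ events that all share the same true score. It does not come from short cycles: edge $1$-orbits off $\binom F2$ number at most $k/2$ and cost only $O(k)$. Also, $d=\omega(\log n)$ is used in the paper only to force $\rho,r=o(1)$, not to absorb $\mathrm{poly}(d)$ prefactors.

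The missing idea is the decoupling the paper uses, following Wu--Xu--Yu. For $\pi'\in\maT_k$ with fixed-point set $F$, write $\score_{\pi'}-\score_{\pi^*}=Z_{\pi'}-W_F$. Here $W_F$ is the true score restricted to edges outside $\binom F2$ and vertices outside $F$; the common fixed-point part cancels exactly, and $W_F$ depends on $\pi'$ only through $F$. Take $\tau$ slightly below $\expect{W_F}=m$.

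(a) Bounding $\prob{\exists F:W_F<\tau}$ needs a union over only $\binom nk\le e^{nh(k/n)}$ sets. Hanson--Wright suffices here, since the required deviation $\sqrt{(\rho^2N_k+r^2kd)\,nh(k/n)}$ is $o(m)$ when $k=\Theta(n)$. This is exactly where partial, rather than exact, recovery is used.

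(b) Bound $\prob{\exists\pi':Z_{\pi'}\ge\tau}\le n^k e^{-\tau}\expect{e^{Z_{\pi'}}}$ using the exact cycle MGF at $t=1$. This yields $\frac12$ instead of $\frac14$ per element: the exponent is about $-\frac m2\le-\frac k4\pth{(n-1)\log\frac1{1-\rho^2}+2d\log\frac1{1-r^2}}$, using $N_k\ge\frac{(n-1)k}2$.

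Your fallback sentence points in this direction but puts the tools on the wrong sides. What must be concentrated is $W_F$, uniformly over $F$, not $\score(\pi^*)$ itself, whose fixed-point part must cancel rather than be bounded separately. On the noise side you need an $n^{-(1+\epsilon')k}$ tail with the sharp sub-Gaussian constant, which a generic Hanson--Wright bound with an unspecified constant cannot supply.
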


{ The upper bound holds uniformly over all $\pi^*\in\maS_n$, while the lower bound is obtained by analyzing the Bayes risk under the uniform prior on $\pi^*$, which is least favorable in our permutation-symmetric model and therefore leads to the same threshold for the minimax risk. Consequently, the threshold is valid for both minimax and Bayesian risks. 
As for the assumption $d=\omega(\log n)$, it is standard in Gaussian database alignment: identifying a vertex among $n$ candidates requires $\Theta(\log n)$ bits, while each feature coordinate contributes only $O(1)$ bits of discriminative information, so the total feature dimension must eventually dominate $\log n$ (see, e.g., \cite{dai2019database}).}
Indeed, this assumption is commonly adopted in prior work on attributed graph alignment; see, e.g., \cite{dai2023gaussian,yang2025exact}.
Even without this assumption, we can still derive the optimal rate up to universal constants.
Theorem~\ref{thm:main-partial} characterizes the optimal rate for the information-theoretic threshold in the partial recovery regime. In particular, the special case $\delta = 1$ corresponds to exact recovery. For obtaining a sharper constant in this regime, we have the following theorem.

\begin{theorem}[Exact Recovery]\label{thm:exact-main}
Under featured correlated Gaussian Wigner model, if $d=\omega(\log n)$ and $n\log (\frac1{1-\rho^2})+d\log (\frac1{1-r^2})\ge (4+\epsilon)\log n$ for some constant $\epsilon>0$, then there exists an estimator $\hat \pi$ such that, for any $\pi^*\in \maS_n$, we have 
    $$\prob{\mathrm{overlap}(\hat \pi, \pi^*)=1}=1-o(1).$$

    Conversely, if $r^2\ge\frac{40}{d}$ and $n\log (\frac1{1-\rho^2})+d\log (\frac1{1-r^2})+4\log d\le (4-\epsilon)\log n$ for some constant $\epsilon>0$, then for any estimator $\hat \pi$, 
    $$\prob{\mathrm{overlap}(\hat \pi, \pi^*)\neq 1}=1-o(1),$$
    where $\pi^*$ is uniformly distributed over $\maS_n$.
\end{theorem}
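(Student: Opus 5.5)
For the achievability part of Theorem~\ref{thm:exact-main}, I will use the maximum likelihood estimator. Under $\maG(n,d,\rho,r)$ the log-likelihood depends on $\pi$ only through
$$\score(\pi)=\rho\sum_{u<v}\beta_{uv}(G_1)\beta_{\pi(u)\pi(v)}(G_2)+r\sum_{u}\langle \bm{x}_u,\bm{y}_{\pi(u)}\rangle .$$
So $\hat\pi=\argmax_\pi \score(\pi)$. I will bound $\prob{\hat\pi\neq\pi^*}$ with a union bound over $\pi\neq\pi^*$, grouped by the number $k$ of points moved by $\pi^{-1}\circ\pi^*$.

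For fixed $\pi$ with $k$ non-fixed points, $\score(\pi)-\score(\pi^*)$ is a Gaussian quadratic form. I will compute its moment generating function exactly by decomposing $\pi$ into edge orbits and node orbits (cycles), the same way as in the Gaussian Wigner and Gaussian database literature. The edge part involves roughly $\binom{n}{2}-\binom{n-k}{2}\approx kn$ edge pairs. The feature part involves $k$ node pairs in $d$ independent coordinates. A Chernoff bound at the optimal parameter (equivalently, a Bhattacharyya-type bound) should then give
$$\prob{\score(\pi)\ge \score(\pi^*)}\le \exp\!\Big(-\tfrac{k}{4}\big(n\log\tfrac1{1-\rho^2}+d\log\tfrac1{1-r^2}\big)(1+o(1))\Big),$$
up to corrections from short cycles (fixed edges inside cycles, 2-cycles). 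I expect these corrections to be negligible when $k$ is small relative to $n$.

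Summing against the at most $n^k$ permutations with $k$ moved points gives $\sum_k n^k e^{-k(4+\epsilon)\log n/4}=o(1)$. For large $k$ (for example $k\ge n/\log n$) the edge contribution alone is $\Theta(kn\log\frac1{1-\rho^2})$, or the $d\log\frac1{1-r^2}$ term dominates, so a cruder bound suffices. I expect the main obstacle to be obtaining the sharp constant $1/4$ per moved vertex uniformly over cycle structures, especially when 2-cycles cause the feature and edge orbits to interact. This is where the assumption $d=\omega(\log n)$ should be needed to absorb the lower-order terms.

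For the converse, under the uniform prior the MAP estimator is the ML estimator, so it suffices to show that with high probability some transposition $\tau=(u\,v)$ satisfies $\score(\pi^*\circ\tau)>\score(\pi^*)$. Write $Z_{uv}=\score(\pi^*\circ\tau_{uv})-\score(\pi^*)$. I will apply the second moment method to $N=\sum_{u<v}\indc{Z_{uv}>0}$ and establish two facts:
\begin{enumerate}
\item a lower bound on $\prob{Z_{uv}>0}$ matching the Chernoff exponent $\tfrac12\cdot\tfrac{1}{2}\big(n\log\tfrac1{1-\rho^2}+d\log\tfrac1{1-r^2}\big)$ up to a polynomial prefactor. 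This uses a tilted measure and a local CLT; the $4\log d$ slack and $r^2\ge 40/d$ are there to handle the $\mathrm{poly}(d)$ prefactor from the $\chi^2$-type feature term.
\item approximate pairwise independence, namely $\var(N)=o(\E[N]^2)$. Disjoint pairs are nearly independent; overlapping pairs share one vertex and contribute $n^3$ terms, which I will control by conditioning on the shared vertex's features and incident edges.
\end{enumerate}
With $\E N\ge n^2 e^{-(4-\epsilon)\log n/2}\cdot \mathrm{poly}(d)^{-1}\to\infty$, Chebyshev's inequality gives $N>0$ with high probability, so exact recovery fails. The sharp lower tail bound in the first fact is the delicate step.
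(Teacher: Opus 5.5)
Your small-$k$ argument is the paper's Proposition~\ref{prop:exact-possible}: Chernoff at $t=\tfrac12$ over node and edge orbits, plus a union bound with $|\maT_k|\le n^k$. It does handle $k\le n/\log n$, and in fact $k\le\epsilon n/16$.

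\textbf{The gap: the range $k=\Theta(n)$.} Your claim that for larger $k$ ``a cruder bound suffices'' is false.
The per-permutation bound has exponent about $\tfrac14\bigl(N_k-\tfrac k2\bigr)\log\tfrac1{1-\rho^2}+\tfrac{kd}4\log\tfrac1{1-r^2}$, where $N_k=\binom n2-\binom{n-k}2=kn\bigl(1-\tfrac{k+1}{2n}\bigr)$. So the edge exponent per moved vertex falls from $\tfrac n4\log\tfrac1{1-\rho^2}$ to about $\tfrac n8\log\tfrac1{1-\rho^2}$ as $k\to n$. Meanwhile $\log|\maT_k|=(1+o(1))k\log n$ does not shrink. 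In the critical regime $kn\log\tfrac1{1-\rho^2}=\Theta(k\log n)$ is of the same order as the entropy, so the constant is everything.

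A concrete failure: take $n\log\tfrac1{1-\rho^2}=3\log n$ and $d\log\tfrac1{1-r^2}=(1+\epsilon)\log n$. Neither source suffices alone, so you cannot fall back on a single-source estimator. At $k=n$ your union bound equals $\exp\bigl((\tfrac{3-2\epsilon}{8}+o(1))\,n\log n\bigr)\to\infty$.

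This is not slack in the estimate. For $n$-cycles the bound at $t=\tfrac12$ is tight up to $e^{O(\sqrt{n\log n})}$ factors (tilt, then apply the CLT). Hence the expected number of $\pi\in\maT_n$ with $\score_\pi\ge\score_{\pi^*}$ genuinely diverges. No first-moment argument over $\maT_k$ can cover $k=\Theta(n)$; you must exploit that all these events share the term $\score_{\pi^*}$.

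\textbf{The missing idea.} The paper handles this range with the decoupling in Proposition~\ref{prop:partial-possible}, following \cite{wu2022settling}.
\begin{itemize}
\item For $\pi'\in\maT_k$ with fixed-point set $F$, subtract from both $\score_{\pi^*}$ and $\score_{\pi'}$ their common value on $(G_1[F],G_2[F])$.
\item Compare each remainder separately with a deterministic threshold $\tau$ slightly below $\rho\varphi(\rho)N_k+r\varphi(r)kd$.
\item The lower-tail event for the $\pi^*$ remainder depends only on $F$. Its union bound therefore costs only $\binom nk\le e^{nh(k/n)}$ (with $h$ the binary entropy), which is negligible against $e^{k\log n}$. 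Hanson--Wright controls each term.
\item Only the upper-tail event for the $\pi'$ remainder pays $n^k$. Chernoff at $t=1$ gives it a per-vertex exponent of at least about $\tfrac14(n\rho^2+2dr^2)$.
\end{itemize}
This range therefore needs only $n\log\tfrac1{1-\rho^2}+2d\log\tfrac1{1-r^2}\ge(4+\epsilon)\log n$, which your hypothesis implies. This step, not the small-$k$ one, is where $d=\omega(\log n)$ is used: it forces $\rho,r=o(1)$ once neither source alone is above threshold.

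\textbf{Smaller points.}
\begin{itemize}
\item The likelihood weights are $\varphi(\rho)=\rho/(1-\rho^2)$ and $\varphi(r)$, not $\rho$ and $r$. Your ``equivalently Bhattacharyya'' step needs the true weights, though the discrepancy is asymptotically harmless when $\rho,r=o(1)$.
\item The single-transposition exponent is $\tfrac12\bigl((n-2)\log\tfrac1{1-\rho^2}+d\log\tfrac1{1-r^2}\bigr)$, not a quarter of the sum. Your bound on $\expect{N}$ in fact uses the correct value.
\item Your converse follows the paper's skeleton. Where you propose tilting plus a local CLT, the paper multiplies independent edge and feature lower bounds. The feature bound comes from Kunisky--Niles-Weed, and it is the source of the $1/\sqrt d$ prefactor, the condition $r^2\ge 40/d$, and the $4\log d$ slack.
\item For overlapping transpositions the paper applies Chernoff at $t=\tfrac12$ to $\tfrac12(Z_{uv}+Z_{uw})$, obtaining the bound $(1-\rho^2)^{3(n-2)/4}(1-r^2)^{3d/4}$. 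Your conditioning argument would need to reach the same exponent.
\end{itemize}
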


The technical condition $r^2 \ge 40/d$ is only imposed to sharpen the leading constant in this regime: Theorem~\ref{thm:main-partial} already yields the optimal rate without this assumption under the special case $\delta = 1$, while under $d = n^{o(1)}$ and $d = \omega(\log n)$ such a lower bound on the feature signal ensures that our exact recovery threshold attains the optimal constant. 
In comparison with the special case $\delta = 1$ in Theorem~\ref{thm:main-partial}, Theorem~\ref{thm:exact-main} establishes a sharper information-theoretic threshold for exact recovery under certain conditions on $d$. 
Indeed, the difference between the $2d$-term in partial recovery and the $d$-term in exact recovery arises because exact recovery requires distinguishing much smaller distances between candidate alignments, which in turn necessitates stronger correlation and thus a stronger condition.

For the special case $r = 0$, \cite{wu2022settling} showed that in the correlated Gaussian Wigner model, there is a phase transition from possible exact recovery to impossible recovery as the quantity $\frac{n\log(1/(1-\rho^2))}{\log n}$ changes from $4+\epsilon$ to $4-\epsilon$. For another special case $\rho = 0$, \cite{dai2019database} demonstrated that in the Gaussian database model exact recovery is possible when $d\log(1/(1-r^2)) \ge (4+\epsilon)\log n$, and impossible when $d\log(1/(1-r^2)) \le (4-\epsilon)\log n$, under the same condition on the feature dimension $d$.
In our new model $\maG(n,d,\rho,r)$, we show that the optimal threshold is determined by the two components of the model: structure and features. Specifically, the term $n\log(1/(1-\rho^2))$ captures the contribution of structural information, while $d\log(1/(1-r^2))$ captures the contribution of feature information. Indeed, when $n\log(1/(1-\rho^2)) = C_1\log n$ and $d\log(1/(1-r^2))=C_2\log n$ with $C_1,C_2<4$, and $C_1+C_2>4$, there exists an estimator $\hat\pi$ that achieves exact recovery only when both edge and feature information are available. This demonstrates that our approach goes beyond the performance attainable by relying on either structural or feature information alone. Indeed, balancing the edge and feature information requires a careful choice of weighting coefficients in our estimator design, instead of simply adding the two parts together. See Section~\ref{sec:possibility} for details.


\subsection{Related Work}

\paragraph{Attributed graph alignment}
In the information-theoretic perspective,
\cite{zhang2024attributed} proposed the attributed \ER pair model, where the edges and the features follows Bernoulli distribution under a latent bijective mapping, and it derived the information-theoretic thresholds for recovering the latent mappings in both possibility and impossibility regimes. ~\cite{yang2024exact} proposed the correlated Gaussian-attributed \ER model and derived the optimal information-theoretic thresholds for exact recovery by analyzing the $k$-core estimator. Both work proposed random graph model with additional feature and found that the graph matching becomes feasible in a wider regime through the information of attributed nodes. There are also many algorithms for attributed graph alignment, including methods based on subgraph counting~\cite{du2017first,liu2019g,wang2025efficient}, spectral methods~\cite{zhang2016final}, optimal transport~\cite{tang2023robust}, and neighborhood statistics~\cite{wang2024feasible}.



\paragraph{Other graph models} Many information-theoretic properties of the correlated Gaussian Wigner model and correlated \ER model have been extensively investigated~\cite{cullina2016improved,cullina2017exact,ganassali2021impossibility,wu2022settling,wu2023testing,ding2023matching,ding2023detection,hall2023partial,huang2024information,du2025optimal,huang2025sample}, along with a rich line of algorithmic developments~\cite{babai1980random,bollobas1982distinguishing,barak2019nearly,dai2019analysis,ganassali2020tree,ding2021efficient,mao2021random,piccioli2022aligning,mao2023exact,mao2023random,fan2019spectral,fan2023spectral,ding2023polynomial,ding2024polynomial,araya2024seeded,ganassali2024statistical,muratori2024faster,du2025algorithmic}. 
However, the marginal distributions inherent in these models makes it different from graph models in practical applications. Therefore, it is crucial to explore more general graph models, such as graphon model~\cite{wolfe2013nonparametric,gao2015rate}, inhomogeneous graph model~\cite{racz2023matching,song2023independence,ding2025efficiently}, geometric random graph model~\cite{wang2022random,bangachev2024detection,gong2024umeyama,sentenac2025online}, planted cycle model~\cite{mao2023detection,mao2024informationtheoretic}, and multiple graph model~\cite{ameen2024exact,ameen2025detecting}.


 \subsection{Our Contribution}
We study the graph alignment problem under a featured correlated Gaussian Wigner model, where both weighted edges and node features are correlated through an unknown vertex permutation. Our contributions advance both the theoretical understanding and the algorithmic practice of attributed graph alignment.

We derive optimal information-theoretic thresholds for both partial and exact recovery. In contrast to most existing theoretical work on attributed graph alignment, which focus on unweighted \ER or stochastic block models~\cite{yang2013community,yang2024exact}, our results apply to graphs with continuous edge weights, revealing the gap between partial recovery and exact recovery in the featured correlated Gaussian Wigner model. Moreover, unlike prior works on Gaussian-attributed models~\cite{yang2024exact,yang2025exact} that require two-step algorithms to achieve optimality, our characterization is directly derived from maximum likelihood objective and does not rely on regime-specific estimators. This makes the theoretical conditions directly interpretable and more amenable to algorithmic realization.

Algorithmically, we propose QPAlign (see Section~\ref{sec:alg}) to achieve efficient recovery for attributed graphs. While existing methods for attributed graph alignment~\cite{bommakanti2024fugal,zeng2023parrot} typically lack theoretical guarantees, QPAlign has provable convergence guarantees and admits the oracle permutation as a feasible optimum of the relaxed objective. Extensive experiments on synthetic data and real datasets indicate that QPAlign performs effectively in regimes predicted by our theory and aligns well with the information-theoretic recovery limits.




\section{Information-theoretic Thresholds}

\subsection{Possibility Results}\label{sec:possibility}
We first introduce our estimator. Given two graphs $(G_1,G_2)\sim \maG(n,d,\rho,r)$ under the featured correlated Gaussian Wigner model, our goal is design an estimator $\hat{\pi}(G_1,G_2)$ to recover the latent bijective mapping $\pi^*:V(G_1)\mapsto V(G_2)$. Let $\maP$ denote the joint distribution of $(G_1,G_2)$, $P(\cdot,\cdot)$ denote the distribution of two correlated standard normals with correlation $\rho$, and $f(\cdot,\cdot)$ denote the multivariate normal distribution $\maN(0,\Sigma_d)$ with $\Sigma_d =\begin{bmatrix}
I_d & rI_d \\
rI_d & I_d
\end{bmatrix}$. 
Let $\varphi(x) = \tfrac{x}{1-x^2}$ and 
$\score_\pi(G_1,G_2) = \varphi(\rho)\sum_{e\in E(G_1)} \beta_e(G_1) \beta_{\pi(e)}(G_2)+\varphi(r)\sum_{v\in V(G_1)} \bm{x}_v \bm{y}_{\pi(v)}$.
Then the likelihood function $\maP_{G_1,G_2\mid \pi^*}\propto \exp\pth{S_{\pi^*}(G_1,G_2)}.$
Consequently, our estimator takes  the form 
\begin{align}\label{eq:estimator}
    \hat{\pi}\in \argmax_{\pi\in \calS_n}\, S_{\pi}(G_1,G_2).
\end{align}

Indeed, $S_\pi(G_1,G_2)$ represents the similarity score under $\pi$ between $G_1$ and $G_2$.
Then the estimator is equivalent to $\hat{\pi}\in \argmax_\pi \score_\pi(G_1,G_2)$. For any two bijections $\pi,\pi':V(G_1)\mapsto V(G_2)$, let $\sfd(\pi,\pi') = n(1-\overlap(\pi,\pi'))$. To prove the recovery guarantee of $\hat{\pi}$, it suffices to show that \begin{align*}
    \score_{\pi^*}(G_1,G_2)&>\max_{\pi:\sfd(\pi,\pi^*)\ge d_0}\score_\pi(G_1,G_2)=\max_{k\ge d_0} \max_{\pi:\sfd(\pi,\pi^*)=k} \score_\pi(G_1,G_2)
\end{align*}
with high probability, where the thresholds $d_0=1$ and $d_0 = (1-\delta)m$ correspond to the exact and partial recoveries, respectively. Let $\maT_k$ denote the set of bijective mappings such that $\sfd(\pi,\pi^*) = k$. Then the failure event satisfies \begin{align*}
    &~\sth{\sfd(\hat{\pi},\pi^*) = k}\subseteq \sth{\exists \pi'\in \maT_k:\score_{\pi^*}(G_1,G_2)\le \score_{\pi'}(G_1,G_2)}.
\end{align*}
Accordingly we bound $\prob{\sfd(\hat{\pi},\pi^*) = k}$ separately for large and small values of $k$. The next two propositions provide those bounds.

\begin{proposition}\label{prop:partial-possible}
    If $d=\omega(\log n)$ and $n\log\pth{\frac{1}{1-\rho^2}}+2d\log\pth{\frac{1}{1-r^2}}\ge (4+\epsilon)\log n$ with some constant $0<\epsilon<1$, then for any constant $0<\delta<1$ and $k\ge \delta n$, there exists $\hat\pi$ such that \begin{align*}
        &\prob{\sfd(\hat{\pi},\pi^*) = k}\le \exp\pth{-nh\pth{\frac{k}{n}}}\indc{k\le n-1}+\exp\pth{-2\log n}\indc{k=n}+\exp\pth{-\frac{\epsilon k\log n}{32}},
    \end{align*}
    where $h(x) = -x\log x-(1-x)\log(1-x)$ is the binary entropy function.
\end{proposition}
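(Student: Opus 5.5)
Let $\hat\pi$ be the maximum-likelihood estimator \eqref{eq:estimator}. For $k\ge\delta n$ I would bound $\prob{\sfd(\hat\pi,\pi^*)=k}$ by a union bound over $\pi\in\maT_k$ of $\prob{\score_\pi-\score_{\pi^*}\ge0}$, controlled through the moment generating function. Write $\score_\pi-\score_{\pi^*}=\varphi(\rho)\sum_{e}\beta_e(G_1)(\beta_{\pi(e)}(G_2)-\beta_{\pi^*(e)}(G_2))+\varphi(r)\sum_v \bm x_v^\top(\bm y_{\pi(v)}-\bm y_{\pi^*(v)})$. Edges and features are independent, so the MGF factorizes. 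Both pieces are quadratic forms in Gaussians whose MGFs decompose along the cycle structure of the permutation $\sigma=\pi^{*-1}\circ\pi$: the edge part along the cycles of the induced edge permutation, the feature part along the node cycles of $\sigma$ (and, coordinatewise, $d$ independent copies). By the standard computation for correlated Gaussian pairs (as in \cite{wu2022settling} and \cite{dai2019database}), a cycle of length $\ell\ge2$ contributes at most $(1-\rho^2)^{\ell/2}$ (resp. $(1-r^2)^{d\ell/2}$) at the Chernoff parameter $t=1/2$, up to lower-order corrections. Fixed points contribute $1$.

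A permutation with $k$ non-fixed vertices has at least $\binom{k}{2}+k(n-k)\approx k n/2$ moved edges once $k\ge\delta n$; more precisely, the number of non-fixed edge orbits elements is $\binom n2-\binom{n-k}{2}-(\text{2-cycle edges})$. Hence, at $t=1/2$,
\begin{align*}
\prob{\score_\pi\ge\score_{\pi^*}}\le \exp\Big(-\tfrac{k}{2}\big(\tfrac{n}{2}(1-o(1))\log\tfrac1{1-\rho^2}+d\log\tfrac1{1-r^2}\big)\Big)\le\exp\Big(-\tfrac{(4+\epsilon)(1-o(1))k\log n}{4}\Big).
\end{align*}
Here the factor $2d$ in the hypothesis is exactly matched by $d\cdot k$ from features against $nk/2$ from edges. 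This is why $2d$ rather than $d$ appears.

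Since $|\maT_k|\le\binom nk k!\le n^k$, a crude union bound gives a $k\log n$ entropy term that exactly cancels the leading term and leaves only $\epsilon k\log n/4$ slack. To make that slack usable I would split the union bound in the usual way. Truncate the edge sum at its typical value: apply a Gaussian concentration (Hanson–Wright) bound to $\sum_e \beta_e(G_1)\beta_{\pi^*(e)}(G_2)$ restricted to moved edges, and similarly for the feature term. Fluctuations of $\score_{\pi^*}$ are shared by all $\pi\in\maT_k$ with a given moved set, and cost only $\binom nk\le e^{nh(k/n)}$. That is the source of the first term, $\exp(-nh(k/n))$; the case $k=n$, where the entropy vanishes, is handled separately with an explicit $n^{-2}$ tail. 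The remaining $\pi$-dependent part is then union-bounded over $k!$ choices, with the hypothesis leaving the margin $\exp(-\epsilon k\log n/32)$ after absorbing constants from the truncation. The assumption $d=\omega(\log n)$ controls the feature part's lower-order corrections: the feature MGF exponent is $\frac d2\log\det$ of a $2\ell\times2\ell$ matrix, and its deviation from $(1-r^2)^{d\ell/2}$ must be $o(\log n)$ per moved vertex.

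The main obstacle is computing the MGF cycle by cycle sharply enough, especially for short cycles (length $2$ in both the edge and node permutations). There the per-cycle bound is weaker, and one must check that the deficit is $o(k\log n)$; I would use that at most $O(k)$ edge orbits are short, so the loss is $O(k)$ in the edge exponent, negligible against $nk$. I would also choose the constants in the two-stage union bound so that the final slack is $\epsilon/32$.
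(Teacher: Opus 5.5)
\emph{There is a genuine gap: your MGF bound is off by a factor of two, the naive union bound therefore fails, and the second stage of your split is never actually bounded.}

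Your skeleton matches the paper's. You use the MLE and split off the $\pi^*$-part on the moved edges and vertices. You control it by Hanson--Wright with a union over the $\binom nk$ moved sets, which gives $e^{-nh(k/n)}$, and $n^{-2}$ for $k=n$. You then apply a Chernoff bound with a union over $|\maT_k|\le n^k$ to the $\pi$-dependent part.

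The computation your reasoning rests on is off by a factor of two. At $t=1/2$, the differenced MGF of an edge orbit of length $\ell$ is at most $(1-\rho^2)^{\ell/4}$, not $(1-\rho^2)^{\ell/2}$. An edge $2$-cycle gives exactly $\exp(\mu_2^\sfE(1/2))=(1-\rho^2)^{1/2}$, and node orbits give $(1-r^2)^{d\ell/4}$. Your version is also inconsistent with the transposition bound $\epsilon_1\ge\frac{c}{\sqrt{d\log n}}(1-\rho^2)^{\frac{n-2}{2}(1+o(1))}(1-r^2)^{d/2}$ in the proof of Proposition~\ref{prop:impossible-exact}. The crude bound is therefore only $\exp\bigl(-\tfrac14(N_k\log\tfrac1{1-\rho^2}+kd\log\tfrac1{1-r^2})\bigr)$. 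With $N_k\ge k(n-1)/2$, which is tight for $k$ near $n$, the hypothesis guarantees only $\exp(-\tfrac{4+\epsilon}{8}k\log n)$. This is attained for instance when $\rho=0$, and it loses to $n^k$. The naive union bound would need $8+\epsilon$ in place of $4+\epsilon$, so there is no $\epsilon k\log n/4$ slack.

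So the split is not a device to ``make slack usable''; it is exactly what recovers the lost factor of two. Your proposal leaves its second stage unproved, because the only MGF you compute (the differenced one at $t=1/2$) is the wrong object there.

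What is needed is a Chernoff bound for the undifferenced sum $Z=\varphi(\rho)\sum_{e\notin\binom F2}\beta_e(G_1)\beta_{\pi(e)}(G_2)+\varphi(r)\sum_{v\notin F}\bm x_v^\top\bm y_{\pi(v)}$ at $t=1$. It is compared against the deterministic threshold $\tau\ge(1-\tfrac{\epsilon}{16})(\rho\varphi(\rho)N_k+r\varphi(r)kd)$ supplied by the first stage. The paper obtains it from four ingredients:
\begin{itemize}
\item the cycle cumulants $\kappa_\ell^\sfE,\kappa_\ell^\sfV$ of the products;
\item the bound $\kappa_\ell\le\tfrac{\ell}{2}\kappa_2$ for $\ell\ge2$;
\item the estimate $\kappa_2^\sfE(1)\le(1+4\rho^2)\rho\varphi(\rho)$ and its feature analogue;
\item a separate $\kappa_1^\sfE$ term for the at most $k/2$ length-one edge orbits outside $\binom F2$, namely swapped pairs whose summands are correlated products with nonzero mean. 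These, not $2$-cycles, are the short orbits that need care.
\end{itemize}
This yields $\log\expect{e^{Z}}\le(\tfrac12+\tfrac{\epsilon}{128})(\rho\varphi(\rho)N_k+r\varphi(r)kd)+k$. The net exponent is then $\le-(\tfrac12-\tfrac{\epsilon}{32})(\rho\varphi(\rho)N_k+r\varphi(r)kd)+k\le-(1+\tfrac{\epsilon}{16}-\tfrac{\epsilon^2}{128})k\log n+k$, which beats $n^k$ with margin $\tfrac{\epsilon}{32}k\log n$. The gain comes from comparing the $\pi$-part against nearly the full mean of the $\pi^*$-part. The fluctuations of that mean are paid for only through $\binom nk=e^{O(n)}$, not through $n^k$.

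All of this requires $\rho,r=o(1)$: for $t=1$ to be admissible, for the $\kappa$ estimates, and for the Hanson--Wright deviation $a_k$ to be at most $\tfrac{\epsilon}{16}$ of the mean when $k\ge\delta n$. The paper gets $\rho,r=o(1)$ from $d=\omega(\log n)$, after using the single-source results to reduce to the regime where neither $n\log\tfrac1{1-\rho^2}$ nor $d\log\tfrac1{1-r^2}$ alone is large. That is the actual role of $d=\omega(\log n)$, not controlling lower-order corrections of the feature MGF.
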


In view of Proposition~\ref{prop:partial-possible}, an upper bound is established for any $\sfd(\hat{\pi},\pi^*) = k$ with $k \ge \delta n$. Summing over all $k \ge \delta n$ yields an error estimate for $\prob{\overlap(\hat{\pi},\pi^*) \ge \delta}$ in the partial recovery regime. However, the proposition only controls the error probability when $k \ge \delta n$. To derive an error bound in the exact recovery regime, it remains necessary to handle the case $k < \delta n$. Specifically, we establish the following proposition.

\begin{proposition}\label{prop:exact-possible}
    If $n\log\pth{\frac{1}{1-\rho^2}}+d\log\pth{\frac{1}{1-r^2}}\ge (4+\epsilon)\log n$ with some constant $0<\epsilon<1$, then for any $k\le \frac{\epsilon}{16} n$, the estimator $\hat{\pi}$ in~\eqref{eq:estimator} satisfies \begin{align*}
        \prob{\sfd(\hat{\pi},\pi^*) = k}\le \exp\pth{-\frac{\epsilon}{8}k\log n}.
    \end{align*}
\end{proposition}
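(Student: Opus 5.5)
Fix $k\le \frac{\epsilon}{16}n$. We use a union bound over $\pi\in\maT_k$ combined with a Chernoff bound on $\score_\pi-\score_{\pi^*}$, exploiting the cycle structure of $\sigma=\pi^{*-1}\circ\pi$. By the earlier inclusion of the failure event,
\begin{align*}
\prob{\sfd(\hat\pi,\pi^*)=k}\le \sum_{\pi\in\maT_k}\prob{\score_\pi(G_1,G_2)-\score_{\pi^*}(G_1,G_2)\ge 0},
\end{align*}
and $|\maT_k|\le \binom{n}{k}k!\le n^k$.

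For fixed $\pi\in\maT_k$, the difference $\score_\pi-\score_{\pi^*}$ decomposes into an edge part and a feature part, which are independent. The feature part is $\varphi(r)\sum_{v:\pi(v)\ne\pi^*(v)}\bm x_v(\bm y_{\pi(v)}-\bm y_{\pi^*(v)})$, a sum over the $k$ moved vertices. For the edge part, write $E_k$ for the set of pairs $e$ with $\pi(e)\ne\pi^*(e)$. Since $\pi$ moves $k$ vertices, $|E_k|\ge \binom{n}{2}-\binom{n-k}{2}\ge k(n-k)\ge k n(1-\epsilon/16)$. The edge part is $\varphi(\rho)\sum_{e\in E_k}\beta_e(G_1)(\beta_{\pi(e)}(G_2)-\beta_{\pi^*(e)}(G_2))$. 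Both parts factor along the orbits of $\sigma$ acting on vertices (for features) and on vertex pairs (for edges).

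Following \cite{wu2022settling} and \cite{dai2019database}, the MGF of each orbit sum at $t=1/2$ can be computed exactly via the Gaussian quadratic-form formula $\E e^{\langle z,Az\rangle}=\det(I-2A\Sigma)^{-1/2}$ applied to the cycle. This gives the bound $\prob{\score_\pi-\score_{\pi^*}\ge 0}\le \E\exp\big(\tfrac12(\score_\pi-\score_{\pi^*})\big)$. For an orbit of length $\ell\ge 2$, the factor is at most $(1-\rho^2)^{\ell/4}$ for edges and $(1-r^2)^{d\ell/4}$ for features. For features this is an exact computation: the per-coordinate block is a circulant quadratic form whose eigenvalues give $\prod_j(1+\ldots)$, and for $\ell=2$ it equals $(1-r^2)^{d/2}$; the longer orbits are only better. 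The fixed pairs of $\sigma$ contribute a factor of $1$. The weights $\varphi(\rho),\varphi(r)$ are the likelihood weights, which is why the per-entry factor is the Bhattacharyya-type quantity $(1-\rho^2)^{1/4}$. Hence
\begin{align*}
\prob{\score_\pi\ge\score_{\pi^*}}\le \exp\Big(-\tfrac{|E_k|}{4}\log\tfrac1{1-\rho^2}-\tfrac{kd}{4}\log\tfrac1{1-r^2}\Big).
\end{align*}
A subtlety is that edge orbits consisting of pairs fixed setwise but not pointwise (2-cycles on vertices give such pairs) and mixed cycles need care. I would handle them by the standard trick of \cite{wu2022settling}: bound the total number of edge orbits by $|E_k|/2$ plus a term of order $k$, so the exponent becomes $\frac{|E_k|-O(k)}{4}$ times the edge information.

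Inserting $|E_k|\ge kn(1-\epsilon/16)-O(k)$ and the hypothesis $n\log\frac1{1-\rho^2}+d\log\frac1{1-r^2}\ge(4+\epsilon)\log n$, the exponent is at least $\frac{k}{4}(1-\epsilon/16)(4+\epsilon)\log n-O(k\log\frac1{1-\rho^2})\ge k(1+\epsilon/5)\log n$. Here the $O(k)\log\frac1{1-\rho^2}$ correction is negligible: if $\log\frac1{1-\rho^2}$ is not $o(\log n)$ then the edge term alone gives a huge exponent, so one may split into these cases. Multiplying by $n^k$ then leaves $\exp(-\frac{\epsilon}{8}k\log n)$.

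The main obstacle is the edge-orbit bookkeeping: obtaining the per-orbit MGF bound uniformly in the orbit length, and controlling the orbits on pairs induced by short vertex cycles, so that the loss is only $O(k)$ rather than a constant fraction of $|E_k|$. This is the step where I would lean on the cycle-decomposition lemma of \cite{wu2022settling} and verify that the feature factor multiplies through independently.
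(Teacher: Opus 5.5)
Your proposal is correct and follows essentially the same route as the paper: a union bound over $\maT_k$ with $|\maT_k|\le n^k$, Chernoff at $t=\tfrac12$, independence of the edge and feature parts, and factorization over the orbits of $\sigma=(\pi^*)^{-1}\circ\pi$, with each orbit of length $\ell\ge 2$ contributing at most $(1-\rho^2)^{\ell/4}$ (resp.\ $(1-r^2)^{d\ell/4}$). The paper obtains this orbit bound from the transfer-matrix formula $1/(\lambda_1^{\ell/2}-\lambda_2^{\ell/2})\le(\lambda_1-\lambda_2)^{-\ell/2}$, while your circulant-determinant route works equally well via $\prod_{j=1}^{\ell}\bigl(1-\rho^2\cos^2(\pi j/\ell)\bigr)\ge(1-\rho^2)^{\ell/2}$ (the paper's Lemma~\ref{lem:det_bound}).

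There are two small slips. First, the number of pairs with $\pi(e)\neq\pi^*(e)$ is exactly $N_k$ minus the number of $2$-cycles of $\sigma$, so your claim $|E_k|\ge N_k$ is false; what holds is $|E_k|\ge N_k-\tfrac{k}{2}=kn\bigl(1-\tfrac{k+2}{2n}\bigr)$, which is still $\ge kn\bigl(1-\tfrac{\epsilon}{16}\bigr)$ for large $n$, and this makes your additive $O(k)$ correction and the case split on $\rho$ unnecessary. Second, $(1-\tfrac{\epsilon}{16})(1+\tfrac{\epsilon}{4})$ is only $\ge 1+\tfrac{11\epsilon}{64}$, not $1+\tfrac{\epsilon}{5}$, but that still exceeds the needed $1+\tfrac{\epsilon}{8}$.
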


The proofs of Propositions~\ref{prop:partial-possible} and~\ref{prop:exact-possible} are deferred to Appendices~\ref{apd:proof-partial-possible} and~\ref{apd:proof-exact-possible}, respectively. By combining these two propositions, we obtain the possibility results stated in Theorems~\ref{thm:main-partial} and~\ref{thm:exact-main}, through summing over $k \ge \delta n$ and $k \ge 1$, respectively.
{
The main task behind Propositions~\ref{prop:partial-possible} and~\ref{prop:exact-possible} is to control the MLE score difference
$Z_\pi = S_\pi(G_1,G_2) - S_{\pi^*}(G_1,G_2)$ uniformly over all permutations under a
mixed Gaussian channel (continuous edges and high-dimensional features). For permutations
with macroscopic Hamming distance, we adapt the cycle decomposition and Gaussian
moment-generating-function computation from~\cite{wu2022settling} to this structural--feature
setting, obtaining sharp bounds with exponent $n\log(1/(1-\rho^2)) + 2d\log(1/(1-r^2))$. For permutations very close to $\pi^*$, we
represent $S_{\pi^*}-S_\pi$ as a quadratic form in a jointly Gaussian vector (edges and
features together), decorrelate the coordinates, and apply the Hanson-Wright inequality~\cite{hanson1971bound}
to get uniform bounds and the sharp constant in the exact recovery threshold.
}

\begin{remark}
    When two attributed graphs are only partially correlated through a latent injective mapping $\pi:S\subseteq V(G_1)\to T\subseteq V(G_2)$, the estimator in~\eqref{eq:estimator} can be naturally extended by optimizing over the set of injective mappings rather than bijections. Similar techniques have been used to establish information-theoretically optimal rates for partially overlapping graph alignment~\cite{huang2024information}. We leave this extension to future work.
\end{remark}

\subsection{Impossibility Results}\label{sec:impossibility}

In this subsection, we present information-theoretic impossibility results. For the converse arguments, we adopt a Bayesian formulation by endowing the ground-truth permutation $\pi^*$ with the uniform prior on $\maS_n$; under this prior, the MLE $\hat\pi$ minimizes the error probability among all estimators.
For impossibility results, it suffices to prove the failure of MLE, which corresponds to show the existence of a permutation $\pi'$ that achieves a higher likelihood than the true permutation $\pi^*$. However, such strategy only proves impossibility results for exact recovery regime (See Proposition~\ref{prop:impossible-exact}). We will show the impossibility results for the partial recovery regime by Fano's method (see, e.g. ~\cite[Section 2.10]{cover2006elements}). 

Let $M_\delta$ be a packing set of $\maS_n$ such that two distinct elements $\pi,\pi'\in \maM_\delta$ differs from a certain threshold. Specifically, we choose $\min_{\pi\neq \pi'\in \maM} \sfd(\pi,\pi')>(1-\delta)n$ in partial recovery regime and $\maM_1 = \maS_n$. The cardinality of $M_\delta$ measures the complexity of the parameter space under the corresponding metric.
Let $\maP$ denote the joint distribution of $(G_1,G_2)$, $\maQ$ be any distribution over $(G_1,G_2)$, and $D_{\mathrm{KL}}$ denote the Kullback–Leibler (KL) divergence. 
We then bound the mutual information $I(\pi^*;G_1,G_2)$ by $\max_{\pi\in \maS_n} D_{\mathrm{KL}}(\maP_{G_1,G_2|\pi}\Vert \maQ_{G_1,G_2})$.

By Fano's inequality, with $\pi^*$ being the discrete uniform prior in the packing set $\maM_\delta$, for any estimator $\hat{\pi}$, we have \begin{align}\label{eq:Fano}
    \prob{\overlap(\hat{\pi},\pi^*)<\delta}\ge 1-\frac{I(\pi^*;G_1,G_2)+\log 2}{\log |\maM_\delta|}.
\end{align}
Specifically, we have the following proposition.


\begin{proposition}[Impossibility result, partial recovery]\label{prop:impossible-partial}
    For any $0<\delta\le 1$, if $n\log\pth{\frac{1}{1-\rho^2}}+2d\log\pth{\frac{1}{1-r^2}}\le c\log n$ for some constant $c$, then \begin{align*}
        \prob{\overlap(\hat{\pi},\pi^*)<\delta}\ge 1-\frac{c}{4\delta}.
    \end{align*}
\end{proposition}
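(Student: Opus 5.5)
We apply Fano's inequality \eqref{eq:Fano} with a packing set $\maM_\delta$ and the uniform prior on it. Two ingredients are needed: an upper bound on $I(\pi^*;G_1,G_2)$ and a lower bound on $\log|\maM_\delta|$. Their ratio should come out to roughly $\frac{c\log n}{4\delta n\log n}$, or something of that order that is dominated by $c/(4\delta)$.

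\emph{Mutual information.} We bound $I(\pi^*;G_1,G_2)\le \max_{\pi} D_{\mathrm{KL}}(\maP_{G_1,G_2\mid\pi}\Vert \maQ)$ with $\maQ$ the product of the marginals: $G_1$ and $G_2$ independent, with i.i.d.\ standard normal edges and features. Under $\maP_{\cdot\mid\pi}$ the data split into independent pairs: $\binom n2$ edge pairs $(\beta_e(G_1),\beta_{\pi(e)}(G_2))$ with correlation $\rho$, and $n$ feature pairs $(\bm x_v,\bm y_{\pi(v)})$, each consisting of $d$ coordinatewise pairs with correlation $r$. The KL divergence between a bivariate normal with correlation $\rho$ and the independent product is $\frac12\log\frac{1}{1-\rho^2}$. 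Therefore
\begin{align*}
D_{\mathrm{KL}}=\binom n2\cdot\tfrac12\log\tfrac1{1-\rho^2}+n d\cdot\tfrac12\log\tfrac1{1-r^2}\le \tfrac n4\Big(n\log\tfrac1{1-\rho^2}+2d\log\tfrac1{1-r^2}\Big)\le \tfrac{c\, n\log n}{4}.
\end{align*}
This bound is uniform in $\pi$. It is exactly where the combination $n\log\frac1{1-\rho^2}+2d\log\frac1{1-r^2}$ appears.

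\emph{Packing.} We need $\maM_\delta\subseteq\maS_n$ with pairwise $\sfd(\pi,\pi')>(1-\delta)n$, i.e.\ pairwise overlap $<\delta$, and $\log|\maM_\delta|\gtrsim \delta n\log n$. A greedy (Gilbert–Varshamov) argument gives this. The number of permutations with overlap at least $\delta$ with a fixed $\pi$ is at most $\binom{n}{\delta n}(n-\delta n)!$, so
\begin{align*}
|\maM_\delta|\ge \frac{n!}{\binom{n}{\lceil\delta n\rceil}(n-\lceil\delta n\rceil)!}=\lceil\delta n\rceil!,
\end{align*}
and hence $\log|\maM_\delta|\ge \delta n\log n-\delta n-O(\log n)=\delta n\log n(1-o(1))$. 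For $\delta=1$ we simply take $\maM_1=\maS_n$ with $\log n!$. Plugging into \eqref{eq:Fano} gives
\begin{align*}
\prob{\overlap(\hat\pi,\pi^*)<\delta}\ge 1-\frac{cn\log n/4+\log 2}{\log\lceil \delta n\rceil!}.
\end{align*}

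\emph{Main obstacle.} The main obstacle is the constant bookkeeping. Since $\log(\delta n)!\approx \delta n\log n-\delta n\log(e/\delta)$, the ratio is $\frac{c}{4\delta}(1+o(1))$, not exactly $\frac{c}{4\delta}$. I would first try to absorb the discrepancy by sharpening the KL bound. We used $\binom n2\le n^2/2$, which leaves slack $\frac n4\log\frac1{1-\rho^2}$, but this slack is not enough in general. Failing that, I would note that the statement is presumably meant up to $1+o(1)$, or compensate by replacing $\maQ$ with a better reference measure. The alternative is to use the conditional form $I(\pi^*;G_1,G_2)=I(\pi^*;G_2\mid G_1)$ and take the rate loosely, accepting the $1-o(1)$ factor.

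Finally, the Fano bound is for the uniform prior on $\maM_\delta$. To match the stated uniform prior on $\maS_n$, I would use a randomization argument: compose with an independent uniform relabeling, which preserves both the mutual information bound and the packing. Alternatively, I would apply Fano directly with the uniform $\pi^*\in\maS_n$, using the ball-counting version: $\log n!-\log\max_{\pi}\#\{\pi':\overlap(\pi,\pi')\ge\delta\}\ge\log\lceil\delta n\rceil!$. This gives the same bound directly.
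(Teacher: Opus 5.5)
Your plan is the paper's proof: Fano's inequality, the bound $I(\pi^*;G_1,G_2)\le\binom n2\frac12\log\frac1{1-\rho^2}+\frac{nd}{2}\log\frac1{1-r^2}\le\frac{cn\log n}{4}$ via the product of the marginals, and the volume bound $|\maM_\delta|\ge n!/\bigl(\binom{n}{\delta n}(n-\delta n)!\bigr)=(\delta n)!$ from Lemma~\ref{lem:mutual-pack}.

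\textbf{The constant.} Your worry is well founded, and it applies equally to the paper. The paper's last step just asserts $\frac{I+\log 2}{\delta n\log(\delta n/e)}\le\frac{c}{4\delta}$. But when $\rho\to0$ and $2d\log\frac1{1-r^2}=c\log n$, the left side equals $\frac{c}{4\delta}\cdot\frac{\log n+4\log 2/(cn)}{\log n-\log(e/\delta)}$, which is strictly larger than $\frac{c}{4\delta}$. In that regime the $\frac n4\log\frac1{1-\rho^2}$ slack you identified also vanishes. So what both arguments establish is $1-\frac{c}{4\delta}(1+o(1))$. For large $n$ this still gives the converse of Theorem~\ref{thm:main-partial} after an arbitrarily small increase of $c$.

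\textbf{The Fano step.} I would promote your ball-counting version of Fano from an aside to the main argument. There $\pi^*$ is uniform on $\maS_n$ and the denominator is $\log\bigl(n!/\max_{\pi}|\{\pi':\overlap(\pi,\pi')\ge\delta\}|\bigr)\ge\log\lceil\delta n\rceil!$. It gives the same numbers and matches the prior in Theorem~\ref{thm:main-partial}. It also avoids a point that the packing form~\eqref{eq:Fano} glosses over. Two packing elements whose pairwise overlap is merely below $\delta$ can both have overlap at least $\delta$ with the same estimate, since that only forces them to agree on a $(2\delta-1)$-fraction of vertices. Hence the standard reduction from estimation to testing over $\maM_\delta$ does not apply as written.
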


Proposition~\ref{prop:impossible-partial} provides an impossibility result for partial recovery, highlighting the relationship between the recovery probability and the threshold. This result also extends to the exact recovery regime when $\delta = 1$. The following proposition strengthens this conclusion under an assumption for the exact recovery regime, achieving both vanishing error and a sharp constant in the threshold.

\begin{proposition}[Impossibility result, exact recovery]\label{prop:impossible-exact}
    If $n\log\pth{\frac{1}{1-\rho^2}}+d\log\pth{\frac{1}{1-r^2}}+4\log d\le (4-\epsilon)\log n$  for some constant $\epsilon>0$ under the assumption $r^2\ge \frac{40}{d}$, then for any estimator $\hat{\pi}$, \begin{align*}
        \prob{\hat{\pi}\neq \pi^*} = 1-o(1).
    \end{align*}
\end{proposition}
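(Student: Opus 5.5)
Under the uniform prior on $\maS_n$ the MLE~\eqref{eq:estimator} is Bayes-optimal, so it suffices to show that with probability $1-o(1)$ some $\pi'\neq\pi^*$ satisfies $\score_{\pi'}(G_1,G_2)\ge \score_{\pi^*}(G_1,G_2)$. I will restrict to transpositions: for $u\neq v$ let $\pi_{uv}=\pi^*\circ(u\,v)$.

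\textbf{Score difference for a transposition.} Direct computation gives
\[
Z_{uv}:=\score_{\pi_{uv}}-\score_{\pi^*}=\varphi(\rho)\,A_{uv}+\varphi(r)\,B_{uv},
\]
where $A_{uv}$ collects the $2(n-2)$ edge terms $\sum_{w\neq u,v}\bigl(\beta_{uw}\beta_{\pi^*(v)\pi^*(w)}+\beta_{vw}\beta_{\pi^*(u)\pi^*(w)}-\beta_{uw}\beta_{\pi^*(u)\pi^*(w)}-\beta_{vw}\beta_{\pi^*(v)\pi^*(w)}\bigr)$, and
\[
B_{uv}=\bm x_u\bm y_{\pi^*(v)}+\bm x_v\bm y_{\pi^*(u)}-\bm x_u\bm y_{\pi^*(u)}-\bm x_v\bm y_{\pi^*(v)}=-(\bm x_u-\bm x_v)^\top(\bm y_{\pi^*(u)}-\bm y_{\pi^*(v)}).
\]
Each pair contributes a $2\times2$ Gaussian product of the form $-(a-b)(a'-b')$, where $(a,a')$ and $(b,b')$ are independent $\rho$- (resp.\ $r$-)correlated pairs. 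The difference vectors are independent correlated Gaussians with variance $2$. Hence $-Z_{uv}$ is a weighted sum of $n-2$ edge products and $d$ feature products, each of the form $2(\xi\xi')$ with correlation $\rho$ or $r$.

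\textbf{Single-pair lower bound.} A Gaussian MGF/Chernoff tilt, sharpened to a matching lower bound through a Cramér-type change of measure, should give
\[
\prob{Z_{uv}\ge0}\ge \exp\Bigl(-\tfrac{n}{2}\log\tfrac1{1-\rho^2}-\tfrac d2\log\tfrac1{1-r^2}-O(\log d)\Bigr).
\]
The tilted measure here has an explicit $\chi^2$-like optimizer. The $O(\log d)$ polynomial prefactor comes from local-CLT precision for the feature part. This is where $r^2\ge 40/d$ enters: it guarantees that the tilted variance is of order $d r^2\gtrsim 1$, so the Bahadur–Rao correction is only polynomial in $d$, absorbed by the $4\log d$ slack. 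Under the hypothesis, the single-pair probability is therefore at least $n^{-2+\epsilon/2}$ times harmless factors.

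\textbf{Second moment over many pairs.} Choose a set $\calS$ of about $n^2/2$ pairs, or, to reduce dependence, a disjoint family of vertex pairs restricted to a subset of the vertices. Let $N=\sum_{(u,v)\in\calS}\indc{Z_{uv}\ge0}$. Then $\expect{N}\gtrsim n^{\epsilon/2}\to\infty$. For the variance, disjoint pairs $\{u,v\},\{u',v'\}$ share feature-independent parts but their edge statistics $A_{uv},A_{u'v'}$ share the four cross edges among $u,v,u',v'$ and are otherwise built from different rows. To handle this I would condition on the edges incident to both pairs, or simply discard the $O(1)$ shared edges (each has bounded effect on the exponent). This shows $\prob{Z_{uv}\ge0,Z_{u'v'}\ge0}\le(1+o(1))\,\prob{Z_{uv}\ge0}\,\prob{Z_{u'v'}\ge0}$. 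Overlapping pairs form only an $O(n^3)$ fraction, each bounded by the single probability, and contribute $o(\expect{N}^2)$ provided $\expect{N}^2\gg n^3 p$, i.e.\ $n p\to\infty$, which holds. Chebyshev's inequality then gives $N>0$ with high probability, so the MLE errs.

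\textbf{Main obstacle.} The main obstacle is the sharp single-pair lower bound. The upper-bound Chernoff computation is routine, but matching it from below with only polynomial loss requires a controlled tilted-measure argument. The decoupling step in the second moment needs care for the shared edges.
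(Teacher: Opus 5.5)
\textbf{Gap: pairs of transpositions that share a vertex.} Your overall architecture matches the paper's: Bayes-optimality of the MLE under the uniform prior, restriction to the $\binom n2$ transpositions, and Chebyshev on their count. The argument breaks at pairs of transpositions that share a vertex, i.e.\ $\sfd(\pi_1,\pi_2)=3$. You bound their joint probability by $p=\prob{Z_{uv}\ge 0}$. Since there are $\asymp n^3$ such pairs and $(\expect{N})^2\asymp n^4p^2$, their contribution to $\var(N)/(\expect{N})^2$ is of order $1/(np)$, and you assert $np\to\infty$. This is false in the relevant regime. The Chernoff bound at the MLE tilt $t=1/2$ gives $p\le(1-\rho^2)^{(n-2)/2}(1-r^2)^{d/2}$, so $np\to\infty$ forces $n\log\frac{1}{1-\rho^2}+d\log\frac{1}{1-r^2}\le(2+o(1))\log n$. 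Near the claimed threshold, $p=n^{-2+\epsilon/2+o(1)}$ (consistent with your own first-moment estimate), so $np\to 0$. As written, your argument proves impossibility only below $2\log n$, not $4\log n$. Falling back to a disjoint family of vertex pairs does not help either: that leaves only $O(n)$ transpositions, so $\expect{N}=O(np)\to 0$.

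\textbf{Missing ingredient.} What you need is a sharp joint bound for overlapping transpositions, $\prob{Z_{uv}\ge 0,\,Z_{uw}\ge 0}\le(1-\rho^2)^{3(n-2)/4}(1-r^2)^{3d/4}$. This is roughly $p^{3/2}$ rather than $p$: the two events together involve the $3(n-2)$ edges touching three vertices, not $2(n-2)$. The paper proves this in Lemma~\ref{lem:d=3}. It uses $\indc{Z_1\ge0}\indc{Z_2\ge0}\le e^{(Z_1+Z_2)/2}$ and evaluates the resulting Gaussian integral exactly as $\sqrt{(1-\rho^2)^{\binom n2}/\det\Sigma}$. Here $\det\Sigma$ factorizes over the orbits of the 3-cycle $\pi_2\circ\pi_1^{-1}$ and is bounded via Lemma~\ref{lem:det_bound}. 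With this bound the overlapping term is $O(\epsilon_2/(n\epsilon_1^2))$, which vanishes exactly under the stated hypothesis. This is also where the $4\log d$ slack is used: the $1/\sqrt d$ prefactor in the lower bound on $\epsilon_1$ is squared in $n\epsilon_1^2/\epsilon_2$. It is not consumed by the single-pair bound, as you suggest.

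\textbf{Smaller points.} For disjoint transpositions, the four shared cross edges must change the joint probability by a factor $1+o(1)$; a merely bounded factor would not make the variance ratio vanish. This does hold because $\rho=o(1)$. Your caution here is warranted, since the paper simply asserts exact independence, which is not literally true. Your single-pair lower bound is only sketched; the paper does it without tilting, in three steps:
\begin{itemize}
\item split $Z_{uv}$ into its independent edge and feature parts;
\item condition on the $G_1$-differences, so the edge sum is Gaussian, and apply a Mills-ratio lower bound with $\chi^2$ concentration;
\item invoke the Kunisky--Niles-Weed Gaussian-database estimate for the feature part, which is where $r^2\ge 40/d$ enters.
\end{itemize}
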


By Propositions~\ref{prop:exact-possible} and~\ref{prop:impossible-exact}, we derive sharp thresholds for exact recovery with a gap of $4\log d$. When $\log n\ll d=n^{o(1)}$, the threshold is tight at the constant level. 


\section{QPAlign: Quadratic Programming relaxation for attributed graph Alignment}\label{sec:alg}

In Sections~\ref{sec:possibility} and~\ref{sec:impossibility}, we have shown that the MLE in~\eqref{eq:estimator} achieves the optimal information-theoretic thresholds. However, this estimator requires an exhaustive search over all possible mappings in $\maS_n$, which has a runtime of order $n!$. To address this computational bottleneck, we propose QPAlign, an approximation algorithm for attributed graph alignment.

Let $[n]\triangleq\sth{1,2,\cdots,n}$.
Without loss of generality, we assume $V(G_1) = V(G_2) = [n]$, $\pi:[n]\mapsto [n]$ and $E(G_1) = E(G_2) = \sth{(i,j):1\le i<j\le n}$.
Let $\Pi$ be the permutation matrix of $\pi$ with $\Pi_{ij} = \indc{\pi(i)=j}$ for any $1\le i,j\le n$, and define $\lambda\triangleq \tfrac{\varphi(\rho)}{\varphi(\rho)+\varphi(r)}$. Then the MLE $\hat{\pi}$ in~\eqref{eq:estimator} is equivalent to minimizing
\begin{equation}\label{eq:estimator-1}
\begin{aligned}
     \lambda &~\sum_{1\le i<j\le n}\pth{\beta_{ij}(G_1)-\beta_{\pi(i)\pi(j)}(G_2)}^2+(1-\lambda) \sum_{1\le i\le n} \Vert \bm{x}_i-\bm{y}_{\pi(i)}\Vert^2.
\end{aligned}
\end{equation}
Denote $A_1,A_2$ as the adjacent matrices of $G_1,G_2$. Let $B_1^i = \mathrm{diag}\sth{\bm{x}_{1i},\bm{x}_{2i},\cdots,\bm{x}_{ni}}$ and $B_{2}^i=\mathrm{diag}\sth{\bm{y}_{1i},\bm{y}_{2i},\cdots,\bm{y}_{ni}}$ for any $i\in [d]$, where $\bm{x}_{ki}$ corresponds to the $i-$component of vector $\bm{x}_k$.
{
Then minimizing~\eqref{eq:estimator-1} is equivalent to minimize the following function
\begin{align*}
    &~f(\Pi)\triangleq\lambda \Vert A_1\Pi-\Pi A_2 \Vert_F^2+(1-\lambda) \sum_{i=1}^d \Vert B_1^i\Pi-\Pi B_2^i\Vert_F^2,
\end{align*}
}
where $\Pi\in \mathbb{P}^n\triangleq\sth{\mathbf{P}\in \sth{0,1}^{n\times n},\mathbf{P1}=\mathbf{1},\mathbf{P}^\top \mathbf{1} = \mathbf{1}}$. Indeed, this is an instance of the \emph{quadratic assignment problem} (QAP)~\cite{pardalos1994quadratic,burkard1998quadratic}, which is NP-hard to solve or to approximate~\cite{makarychev2010maximum}.
To obtain a computationally efficient algorithm for estimating $\hat{\pi}$, we employ a relaxation approach.  
Relaxing the set of permutations to  Birkhoff polytope (the set of doubly stochastic matrices)
\begin{align*}
\mathbb{W}^n\triangleq\{\mathbf{W}\in [0,1]^{n\times n}: \mathbf{W1}=\mathbf{1},\mathbf{W}^\top \mathbf{1} = \mathbf{1},
0\le \mathbf{W}_{ij}\le 1 & \text{ for all }i,j\; \},
\end{align*}
{
we derive the quadratic programming (QP) relaxation $\min_{\Pi\in\mathbb W^n}f(\Pi).$
}
{We  solve the above quadratic programming by projected gradient descent. Specifically, we project the matrix to $\mathbb{W}_n$ by Euclidean projection: $\Pi^{k+1} = \mathsf{Proj}_{\mathbb{W}_n}(\Pi^k-\eta \nabla f(\Pi^k))$, where $\eta$ is the step size. We have the following Theorem on the convergence guarantee for the gradient descent method.}
{
\begin{proposition}\label{prop:convergenceguarantee}
    For any two graphs $G_1, G_2$, there exists a constant $L$ such that for any step size $\eta\le L^{-1}$, for any $0<\delta<1$, if $d>32\log\frac{n}{\sqrt\delta}$, then with probability at least $1-\delta$, 
    $$\|\Pi^K-\Pi'\|_F\le \sqrt{\frac{n}{(1-\lambda)(1-r)d\eta K}}$$
    for any $K\ge 1$, where $\Pi'\in \argmin_{\Pi\in \mathbb W^n} f(\Pi)$, and $\Pi^0$ is the initial state.
\end{proposition}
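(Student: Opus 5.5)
The plan is to show that $f$ is $L$-smooth and, with high probability, $\mu$-strongly convex on the affine hull of $\mathbb W^n$ with $\mu\gtrsim(1-\lambda)(1-r)d$. The projected gradient bound then follows from the standard strongly convex analysis. Write $f(\Pi)=\tfrac12\langle \Pi,\mathcal H(\Pi)\rangle$, where $\mathcal H$ is the positive semidefinite linear operator
\begin{align*}
\mathcal H(\Pi)=2\lambda\,\mathcal L_{A}^*\mathcal L_A(\Pi)+2(1-\lambda)\sum_{i=1}^d \mathcal L_{B^i}^*\mathcal L_{B^i}(\Pi),
\end{align*}
with $\mathcal L_A(\Pi)=A_1\Pi-\Pi A_2$ and $\mathcal L_{B^i}(\Pi)=B_1^i\Pi-\Pi B_2^i$. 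Then $f$ is convex and $\nabla f$ is Lipschitz with constant $L=\|\mathcal H\|_{op}\le 2\lambda(\|A_1\|+\|A_2\|)^2+2(1-\lambda)\sum_i(\|B_1^i\|+\|B_2^i\|)^2$. This $L$ is finite for any fixed pair of graphs, which gives the deterministic first clause of the statement.

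For strong convexity, note that the feature part is diagonal in the entries of $\Pi$:
\begin{align*}
\sum_{i=1}^d\|B_1^i\Pi-\Pi B_2^i\|_F^2=\sum_{k,l}\Pi_{kl}^2\,\|\bm x_k-\bm y_l\|^2 .
\end{align*}
Hence $f(\Pi)\ge (1-\lambda)\min_{k,l}\|\bm x_k-\bm y_l\|^2\,\|\Pi\|_F^2$, and the same bound holds for the quadratic form applied to differences of points, because the edge term is PSD. It therefore suffices to lower-bound $\min_{k,l}\|\bm x_k-\bm y_l\|^2$. For a matched pair, $\bm x_k-\bm y_{\pi^*(k)}\sim\maN(0,2(1-r)I_d)$. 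For an unmatched pair the covariance is $2I_d\succeq 2(1-r)I_d$. So each $\|\bm x_k-\bm y_l\|^2/(2(1-r))$ stochastically dominates a $\chi^2_d$ variable.

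The $\chi^2$ lower tail (Laurent–Massart) gives $\prob{\chi^2_d\le d/2}\le e^{-d/16}$. A union bound over the $n^2$ pairs yields failure probability at most $n^2e^{-d/16}$, which is at most $\delta$ exactly when $d\ge 16\log(n^2/\delta)=32\log(n/\sqrt\delta)$. This matches the hypothesis. On the complementary event, $\min_{k,l}\|\bm x_k-\bm y_l\|^2\ge (1-r)d$, so $f$ is $\mu$-strongly convex with $\mu=2(1-\lambda)(1-r)d$ (the factor $2$ comes from $\nabla^2 f$ versus $f$).

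With $\mu$-strong convexity and $L$-smoothness, the minimizer $\Pi'$ over the convex set $\mathbb W^n$ is unique. The standard projected gradient inequality for $\eta\le 1/L$,
\begin{align*}
f(\Pi^{k+1})-f(\Pi')\le \tfrac1{2\eta}\pth{\|\Pi^k-\Pi'\|_F^2-\|\Pi^{k+1}-\Pi'\|_F^2},
\end{align*}
summed over $k$ and combined with monotonicity of $f(\Pi^k)$, gives $f(\Pi^K)-f(\Pi')\le \|\Pi^0-\Pi'\|_F^2/(2\eta K)$. Optimality of $\Pi'$ together with strong convexity gives $\tfrac\mu2\|\Pi^K-\Pi'\|_F^2\le f(\Pi^K)-f(\Pi')$. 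Finally, $\|\Pi^0-\Pi'\|_F^2\le 2n$ for doubly stochastic matrices, since each has Frobenius norm at most $\sqrt n$ and $\langle\Pi^0,\Pi'\rangle\ge0$. Combining these gives $\|\Pi^K-\Pi'\|_F^2\le \frac{2n}{\mu\eta K}=\frac{n}{(1-\lambda)(1-r)d\eta K}$, which is the claim. The only genuinely probabilistic step, and the one I expect to need care, is the uniform lower bound on $\|\bm x_k-\bm y_l\|^2$ over all pairs, including the matched ones whose variance is reduced by $r$. The constants in the $\chi^2$ tail have to be tracked so that they land on $32\log(n/\sqrt\delta)$.
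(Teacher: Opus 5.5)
Your proposal is correct and follows essentially the same route as the paper: $L$-smoothness with the standard $O(1/K)$ projected-gradient bound on $f(\Pi^K)-f(\Pi')$, strong convexity with modulus $2(1-\lambda)\min_{k,l}\|\bm x_k-\bm y_l\|^2$ coming from the diagonal feature term, and the same $\chi^2$ lower tail $e^{-d/16}$ with a union bound over the $n^2$ pairs. The only cosmetic difference is that you get $\|\Pi^0-\Pi'\|_F^2\le 2n$ directly from $\|\Pi\|_F^2\le n$ and $\langle\Pi^0,\Pi'\rangle\ge 0$, whereas the paper reduces to permutation matrices via the Birkhoff--von Neumann theorem.
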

}

 Relative to~\cite{fan2019spectral}, the node-feature term in our objective plays an important role to their regularization: it ensures the convergence rate remains bounded whenever $\lambda\neq 1$ and $r<1$. Consequently, incorporating node features renders our algorithm stable without introducing any extra regularization term. 
Indeed, relaxing to Birkhoff polytope is widely adopted in graph matching~\cite{vogelstein2015fast,bommakanti2024fugal}, and has been proved tight in random graph models~\cite{fan2019spectral,fan2023spectral}.
In practice, we often regard $\lambda$ as a tuning parameter and adapt model selection technique for picking $\lambda$. 
{By Proposition~\ref{prop:convergenceguarantee}, we obtain a standard $O(1/K)$ convergence guarantee for the projected gradient descent scheme with exact Euclidean projections onto $\mathbb W_n$. However, in our implementation, we replace these exact projections by a few iterations of Sinkhorn scaling~\cite{sinkhorn1964relationship} as a fast approximate projection onto $\mathbb W_n$. In practice, since Sinkhorn requires nonnegative entries, we apply it to the truncated matrix $(\Pi^{(t+1)})_{+}$, obtained by setting all negative entries of $\Pi^{(t+1)}$ to zero.}

Define $D\in\mathbb{R}^{n\times n}$ as $D_{kj}=\|\bm{x}_k-\bm{y}_j\|_2^2$. We note that $\sum_{i=1}^d \Vert B_1^i\Pi-\Pi B_2^i\Vert_F^2 = \sum_{k,j} D_{kj} \, \Pi_{kj}^2$. Note that $\sum_{k,j}\Pi_{kj}(1-\Pi_{kj}) = 0$ for permutation matrix $\Pi$. We turn this constraint into a regularizer with parameter $\mu$ and derive the following program:
\begin{equation}\label{eq:program-final}
\begin{aligned}
\min_{\Pi \in \mathbb W_n} &\left\{
 \lambda \|A_1\Pi - \Pi A_2\|_F^2 
\;+\; (1-\lambda) \sum_{k,j} D_{kj} \, \Pi_{kj}^2\right.\left.+\mu\sum_{k,j} \Pi_{kj}(1-\Pi_{kj})\right\}.
\end{aligned}
\end{equation}
We solve the problem in~\eqref{eq:program-final} by QPAlign in Algorithm~\ref{alg:qap-relax}. In the following, we outline a general recipe for QPAlign.
\begin{itemize}
    \item \emph{Gradient descent.} We update $\Pi^{(t+1)} = \Pi^{(t)}-\eta G^{(t)}$ with step size $\eta>0$, where the gradient is given by \begin{align*}
        G^{(t)} = &~2\lambda(A_1^\top E - E A_2^\top)+2(1-\lambda )\pth{D\circ \Pi^{(t)}}+\mu \pth{J_{n\times n}-2\Pi^{(t)}},
    \end{align*}
    where $E = A_1 \Pi^{(t)} - \Pi^{(t)} A_2$ and $(D\circ\Pi^{(t)})_{ij} = D_{ij} \Pi^{(t)}_{ij},J_{ij}=1$ for any $i,j$.
    \item \emph{Sinkhorn normalization.} After each gradient step, update $\Pi^{(t+1)}$ on the set of doubly stochastic matrices $\mathbb{W}^n$ using $K$ iterations of the Sinkhorn normalization procedure~\cite{sinkhorn1964relationship}.
    \item \emph{Rounding via Hungarian algorithm.}  Once convergence is reached, the final doubly stochastic matrix is converted into a permutation matrix by solving the problem
    $\argmax_{\pi\in \maS_n} \sum_{i} \Pi_{i,\pi(i)}^{(t)}$
    using the Hungarian algorithm~\cite{kuhn1955hungarian,munkres1957algorithms}.
\end{itemize}

\paragraph{Time complexity} The construction of the feature-distance matrix $D$ requires $O(dn^2)$ time. Each gradient step has a complexity of { $O(n^3)$}, and the Sinkhorn algorithm with $K$ iterations takes {$O(Kn^2)$}. Consequently, performing $T$ iterations of gradient descent and Sinkhorn projection costs {$O(T(K+n)n^2)$}. The final rounding via the Hungarian algorithm requires $O(n^3)$~\cite{munkres1957algorithms}. Overall, the time complexity of QPAlign is $O((d +T(K+n))n^2)$. We have conducted experiments in Section~\ref{sec:num} on graphs with up to 3,000 nodes. For larger graphs, one possible direction is to first prune the candidate matches for each node using features or local structural signatures, then solve the alignment problem on a compressed graph, and finally replace dense matrix scaling with sparse, approximate, or distributed Sinkhorn updates.



 \begin{algorithm}[t]
 \caption{QPAlign: Quadratic Programming relaxation for attributed graph Alignment}
 \label{alg:qap-relax}
 \begin{algorithmic}[1]
 \STATE\textbf{Input:} Adjacency matrices $A_1,A_2\in\mathbb{R}^{n\times n}$; node features $\bm{x}_i,\bm{y}_i$, $1\le i\le n$; weights $\lambda,\mu>0$; step size $\eta>0$; Sinkhorn iters $K$; max iters $T$; tolerance $\mathrm{tol}$.
 \STATE\textbf{Output:} Estimated permutation $\hat{\pi}$.
 \STATE Construct feature-distance matrix $D \in \mathbb{R}^{n\times n}$ with $D_{ij} = \Vert \bm{x}_i-\bm{y}_j\Vert_2^2$.
 \STATE Initialize $\Pi^{(0)}$.
 \FOR{$t = 0,\dots,T-1$}
   \STATE $E \leftarrow A_1\Pi^{(t)} - \Pi^{(t)} A_2$. 
   \STATE $f^{(t)}\leftarrow \lambda \Vert E\Vert_F^2+(1-\lambda) \sum_{i,j} D_{ij} (\Pi_{ij}^{(t)})^2+\mu\sum_{i,j}\Pi_{ij}^{(t)} (1-\Pi_{ij}^{(t)})$.
   \STATE $G^{(t)}\leftarrow 2\lambda(A_1^\top E - E A_2^\top)+2(1-\lambda )\pth{D\circ \Pi^{(t)}}+\mu \pth{J_{n\times n}-2\Pi^{(t)}}$.
   \STATE Gradient step: $\Pi^{(t+1)} \leftarrow \Pi^{(t)} - \eta G^{(t)}$.
   \STATE {Truncate negative entries: ${\Pi}^{(t+1)} \leftarrow (\Pi^{(t+1)})_{+}$, $(\cdot)_+$: elementwise $\max\{\cdot,0\}$.}
   \STATE Update $\Pi^{(t+1)}$ on the Birkhoff polytope via Sinkhorn: $\Pi^{(t+1)} \leftarrow \textbf{Sinkhorn}(({\Pi}^{(t+1)})_+,K)$.
   \IF{$t>0$\text{ and }$|f^{(t)}-f^{(t-1)}|<\mathrm{tol}$}
      \STATE \textbf{break}
   \ENDIF
 \ENDFOR
 \STATE Round to a permutation via Hungarian algorithm: $\hat \pi \leftarrow \arg\max_{\pi\in \maS_n} \sum_i \Pi^{(t)}_{i,\pi(i)}$.
 \STATE \textbf{Return:} $\hat\pi$.
 \end{algorithmic}
 \end{algorithm}

In practice, when solving the quadratic program in~\eqref{eq:program-final}, the parameter $\lambda$ is typically difficult to estimate from the observations $G_1$ and $G_2$, since the correlation structure between the two graphs (and hence the relative reliability of topology versus features) is usually unknown. In the following, we establish recovery guarantees that hold uniformly over all $\lambda\in(\delta,1-\delta)$, for any fixed constant $\delta\in(0,1/2)$. Define $\mathsf{Edge}=n\log\frac{1}{1-\rho^2}$ and $\mathsf{Vertex}=d\log\frac{1}{1-r^2}$, which quantify the mutual information contributed by edges and vertices, respectively.

\begin{assumption}\label{assm:choiceoflambda}
We assume that there exists a universal constant $\Gamma$, such that $\frac1\Gamma\le \frac{\mathsf{Edge}}{\mathsf{Vertex}}\le  \Gamma$.
\end{assumption}
This assumption is reasonable in many practical settings. If it is known a priori that either the topological information or the feature information is unreliable, one may instead employ algorithms that rely solely on the reliable component to achieve the recovery objective. Therefore, we focus here exclusively on the balanced regime, in which neither the topological nor the feature information is extremely unreliable, and both provide comparably informative signals.
For any $0<\lambda<1$, we consider the estimator \begin{align*}
     \hat \pi_\lambda=\mathop{\text{argmax}}_{\pi\in \maS_n}\left\{\lambda\sum_{e\in E(G_1)}\beta_e(G_1)\beta_{\pi(e)}(G_2)+(1-\lambda)\sum_{v\in V(G_1)}\bm x_v^\top\bm y_{\pi(v)}\right\}.
\end{align*}
The following proposition provides theoretical guarantee on $\hat\pi_\lambda$ for any $\lambda\in (\delta,1-\delta)$.


\begin{proposition}\label{prop:possible-all-lambda}
    Under Assumption~\ref{assm:choiceoflambda}, for any constant $\delta\in (0,1/2)$,
    if $d=\omega(\log n)$ and $n\log\frac1{1-\rho^2}+d\log\frac1{1-r^2}\ge C_0\log n$ for some constant $C_0 = C_0(\delta, \Gamma)$, then there exists an estimator $\hat\pi_\lambda$ only depending on $\lambda,G_1,G_2$ such that, for all $\lambda\in (\delta, 1-\delta)$, $$\prob{\hat\pi_\lambda\ne \pi^*}=o(1).$$
\end{proposition}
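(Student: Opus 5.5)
Since $\hat\pi_\lambda$ is the maximizer of a reweighted score, I would rerun the analysis behind Propositions~\ref{prop:partial-possible} and~\ref{prop:exact-possible} with generic weights $(\lambda,1-\lambda)$ in place of $(\varphi(\rho),\varphi(r))$. Fix $\lambda\in(\delta,1-\delta)$ and set $Z_\pi=S^\lambda_\pi-S^\lambda_{\pi^*}$, where $S^\lambda_\pi=\lambda\sum_e\beta_e(G_1)\beta_{\pi(e)}(G_2)+(1-\lambda)\sum_v\bm x_v^\top\bm y_{\pi(v)}$. Then
\[
\prob{\hat\pi_\lambda\neq\pi^*}\le\sum_{k\ge 1}\sum_{\pi\in\maT_k}\prob{Z_\pi\ge 0},
\]
and $|\maT_k|\le n^k$. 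So it suffices to show $\prob{Z_\pi\ge0}\le\exp(-(1+\epsilon')k\log n)$ for every $\pi\in\maT_k$, and to sum the resulting geometric series. Because the final statement asks for a guarantee at each fixed $\lambda$, no uniformity over $\lambda$ is needed; the constant $C_0$ will only have to be uniform over $\lambda\in(\delta,1-\delta)$.

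For a single $\pi$, I would use the Chernoff bound $\prob{Z_\pi\ge0}\le\expect{e^{tZ_\pi}}$ with a free $t>0$. By independence of edges and features, the MGF factorizes over the cycles of the edge permutation $\pi^E$ (induced on pairs) and of the node permutation $\pi$. Each cycle is a Gaussian quadratic form whose MGF can be computed exactly via the eigenvalues of a circulant structure, as in~\cite{wu2022settling}. For a cycle of length $\ell$, the edge contribution takes the form $\prod_{j}(\cdots)^{-1/2}$ with parameter $s=t\lambda$, and the feature contribution has the same form with $s=t(1-\lambda)$ and $d$ copies. The key elementary bound I need is: for $s$ in a fixed compact range, the edge MGF per nontrivial edge-cycle is at most $\exp(-c(s,\rho)\ell)$, with $c(s,\rho)\gtrsim \min(s\rho,1)\,\rho$. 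Since $\log\frac1{1-\rho^2}\asymp\rho^2$ when $\rho$ is bounded away from $1$ (with a separate check near $1$), this gives a decay exponent proportional to $\log\frac1{1-\rho^2}$, and likewise $\log\frac1{1-r^2}$ for the features. I would choose $t$ of order $1/\max(\varphi(\rho),\varphi(r))$ so that both $s$ values stay comparable to the optimal ones up to factors depending on $\delta$; the lower bound $\min(\lambda,1-\lambda)\ge\delta$ is exactly what guarantees this. The outcome is
\[
\prob{Z_\pi\ge 0}\le\exp\!\big(-c(\delta)\,[\,k\,\mathsf{Edge}\cdot\tfrac{n-k}{n}\text{-type term}+k\,\mathsf{Vertex}\,]\big),
\]
using that the number of moved edges is at least $k(n-k/2-1)/2$ and the number of moved nodes is $k$.

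Assumption~\ref{assm:choiceoflambda} enters here. A mismatched weighting can effectively lose one of the two signals: if $t$ is tuned to the edges, the feature term may be suboptimal and vice versa. With $\mathsf{Edge}\asymp\mathsf{Vertex}$ up to $\Gamma$, however, the one surviving component already carries a $1/(1+\Gamma)$ fraction of $\mathsf{Edge}+\mathsf{Vertex}\ge C_0\log n$. Taking $C_0(\delta,\Gamma)$ large then makes the exponent at least $3k\log n$ for small $k$. For $k$ near $n$, where the edge count bound degrades, I would rely on the $k\,\mathsf{Vertex}$ term, which Assumption~\ref{assm:choiceoflambda} keeps at least $k C_0\log n/(1+\Gamma)$. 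The assumption $d=\omega(\log n)$ keeps the feature MGF in its Gaussian-approximation regime, so that fluctuations of $\|\bm x\|^2$ do not spoil the bound, as in Proposition~\ref{prop:partial-possible}.

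\textbf{Main obstacle.} I expect the hardest step to be the cycle-wise MGF bound with arbitrary weights. In particular, fixed points of $\pi^E$ contribute $\beta_e^2$-type terms, and these must be shown not to cancel the gain. This is delicate because $\rho$ may be tiny while $\mathsf{Edge}$ is not. I would first try the explicit formula $\expect{e^{s(XY'-XY)}}$ for each $2$-cycle and longer cycle, bounding $\log$-determinants by their second-order Taylor expansion in $s\rho$. If that is too lossy for $\rho$ near $1$, I would split into the cases $\rho\le 1/2$ and $\rho>1/2$. Summing over $k$ then finishes the proof.
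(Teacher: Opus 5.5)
\textbf{There is a genuine gap: the per-cycle MGF bound and the choice of $t$ are both wrong.} Your overall plan is sound. It takes a different route from the paper: you apply Chernoff directly to the difference $Z_\pi=S^\lambda_\pi-S^\lambda_{\pi^*}$ using exact cycle-wise MGFs, as in Proposition~\ref{prop:exact-possible}. The paper instead splits at a threshold $\tau$, using Hanson--Wright for the $\pi^*$-part and Chernoff with $t_0\asymp\sqrt{k\log n/Q}$ for the $\pi'$-part.

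Redo the computation in the proof of Lemma~\ref{lem:cumulant-small} with edge weight $s=t\lambda$. A nontrivial edge cycle then contributes $\expect{\exp(-\tfrac a2\sum_i(B_i-B_{i-1})^2)}$ with $a=s\rho-s^2(1-\rho^2)$; for a $2$-cycle this equals $(1+4a)^{-1/2}$. Since $a$ is a concave parabola in $s$:
\begin{itemize}
\item there is decay only for $0<s<\varphi(\rho)$, and the best rate is at $s=\varphi(\rho)/2$;
\item the MGF exceeds $1$ for $s>\varphi(\rho)$;
\item the MGF is infinite once $s$ is of order one (for a $2$-cycle, once $s\ge 1/(2(1-\rho))$).
\end{itemize}
So your bound ``for $s$ in a fixed compact range, $c(s,\rho)\gtrsim\min(s\rho,1)\rho$'' is false when $\rho=o(1)$, which $d=\omega(\log n)$ forces near the threshold: the variance term $s^2$ swamps the drift $s\rho$. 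Worse, $t\asymp1/\max(\varphi(\rho),\varphi(r))$ gives $t\lambda\ge\delta/\max(\varphi(\rho),\varphi(r))\to\infty$. Then $\expect{e^{tZ_\pi}}=\infty$ and the Chernoff bound is vacuous. Your stated goal also cannot be met: no single $t$ makes both $t\lambda\asymp\varphi(\rho)$ and $t(1-\lambda)\asymp\varphi(r)$, because $\varphi(\rho)/\varphi(r)$ is not controlled by $\lambda/(1-\lambda)$.

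\textbf{How to repair it.} Take $t=\tfrac12\min\{\varphi(\rho)/\lambda,\ \varphi(r)/(1-\lambda)\}$. One component then sits exactly at the MLE point $s=\varphi/2$. The other has $0<s\le\varphi/2$, hence $a\ge 0$ and its MGF is at most $1$. The edge or vertex half of Lemma~\ref{lem:cumulant-small} at $t=\tfrac12$ then gives $\log\expect{e^{tZ_\pi}}\le-\min\{\tfrac14(N_k-\tfrac k2)\log\tfrac1{1-\rho^2},\ \tfrac{kd}{4}\log\tfrac1{1-r^2}\}$. This is at most $-\tfrac{k(n-2)}{8n}\min(\mathsf{Edge},\mathsf{Vertex})$. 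Assumption~\ref{assm:choiceoflambda} gives $\min(\mathsf{Edge},\mathsf{Vertex})\ge C_0\log n/(1+\Gamma)$, so $C_0\asymp 1+\Gamma$ beats the $n^k$ union bound.

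With this fix your route is cleaner than the paper's. It works for every $\lambda\in(0,1)$ and does not need $d=\omega(\log n)$, whereas the paper's threshold split loses a factor $\delta^{-2}$ in $C_0$.

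Finally, your ``main obstacle'' does not arise. Edges fixed by $(\pi^*)^{-1}\circ\pi$ cancel exactly in $Z_\pi$. The $\kappa_1$-type terms appear only in the paper's threshold version, where the noise variable is not a difference.
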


 Proposition~\ref{prop:possible-all-lambda} bridges the gap between the information-theoretic results and the theoretical guarantee for the algorithm, demonstrating the achievability of the oracle solution $\hat\pi_\lambda$ for $\lambda$.

\begin{remark}[Regularization term]\label{rmk:regular-term}
We add a regularization term $\sum_{k,j}\Pi_{kj}(1-\Pi_{kj})$ in~\eqref{eq:program-final}. This term encourages the entries of $\Pi$ to approach 0 or 1. {While some previous works employ a penalty of the form $\|\Pi\|_F^2$ to obtain an explicit solution (see, e.g.~\cite{fan2020spectral}), our relaxation directly pushes the solution toward the boundary of the Birkhoff polytope.}
As a result, the intermediate matrix $\Pi^{(t)}$ becomes more concentrated near the vertices of the Birkhoff polytope, which allows the final rounding via the Hungarian algorithm to be more precise. Therefore, the inclusion of this regularization term helps to improve the overall accuracy of the estimated permutation $\hat{\pi}$.
\end{remark}

\section{Numerical Results}\label{sec:num}

\subsection{Simulation Studies}



In this subsection, we provide numerical results for QPAlign in Algorithm~\ref{alg:qap-relax} on synthetic data. One related model is the correlated Gaussian-attributed \ER model~\cite{yang2024exact}, where the correlated pairs of edges follow a multivariate Bernoulli distribution with connection probability $p$ and correlation $\rho$. Fixing $n=3000$ and $d=512$ 
(with $p=0.5$ in the \ER case), we run the algorithm and report the value of $\overlap(\hat{\pi},\pi^*)$ for varying correlations $\rho\in[0,1]$ and $r\in[0,1]$. We evaluate our method with step size $\eta=10^{-4}$, $T=400$, $K=80$, $\lambda=0.1$ and $\mu=0.1$ for the experiments reported here.

Our results are summarized in Figure~\ref{fig:heatmaps}. Figure~\ref{fig:heatmap1} displays the heatmap of the overlap under the featured correlated Gaussian Wigner model. We observe that the overlap increases smoothly with both $\rho$ and $r$, starting from nearly zero when both correlations vanish and approaching one when both correlations are close to unity. This indicates that the estimator $\hat\pi$ gradually aligns with the ground truth $\pi^*$ as the signal in either edges or features becomes stronger. For the low-dimensional setting with $n=100$ and $d=16$, we present additional results in Figure~\ref{fig:heatmaps-n100} in Appendix~\ref{apd:synthetic-data}, which exhibit the same qualitative trend, confirming the effectiveness of our method in both regimes.

\begin{figure}[htbp]
    \centering
    \begin{subfigure}[b]{0.49\textwidth}  
        \centering
        \includegraphics[width=0.8\textwidth]{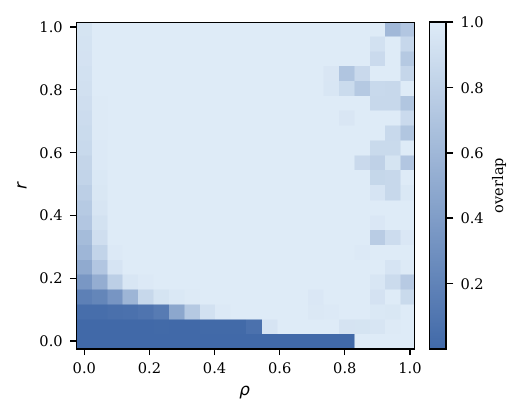}
        \caption{Gaussian Wigner model: $n=3000$ and $d=512$.}
        \label{fig:heatmap1}
    \end{subfigure}
    \hfill
    \begin{subfigure}[b]{0.49\textwidth}
        \centering
        \includegraphics[width=0.8\textwidth]{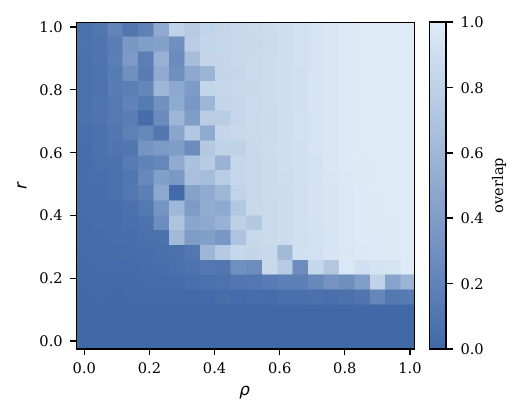}
        \caption{\ER model: $n=3000$ and $d=512$.}
        \label{fig:heatmap2}
    \end{subfigure}
    \caption{Overlap between the estimator $\hat{\pi}$ in Algorithm~\ref{alg:qap-relax} and the ground truth $\pi^*$ in two models with $n=3000$ and $d=512$, evaluated across varying correlations $\rho\in [0,1]$ and $r\in [0,1]$.}
    \label{fig:heatmaps}
\end{figure}
Importantly, these numerical results are consistent with the information-theoretic exact recovery thresholds given in Theorem~\ref{thm:exact-main}. 
We also note that in certain intermediate regimes, there exists a statistic-computation gap: while exact recovery is theoretically possible, computationally achieving it may require stronger correlations. {See Figure~\ref{fig:IT-QPAlign} for a more detailed comparison between the information-theoretic thresholds and the empirical phase-transition boundaries of QPAlign.} {The numerical behavior aligns with the theoretical thresholds established in Theorem~\ref{thm:exact-main}, while also highlighting the presence of a statistical–computational gap in certain intermediate regimes. In particular, Figure~\ref{fig:IT-QPAlign} illustrates how the empirical phase-transition boundaries of QPAlign, for different choices of $\lambda \in \sth{0.1,0.2,\ldots,1.0}$, closely track the information-theoretic limit, thereby providing a direct connection between the algorithm’s empirical success and our derived thresholds.}

\begin{figure}[htbp]
    \centering
    \includegraphics[width=0.35\textwidth]{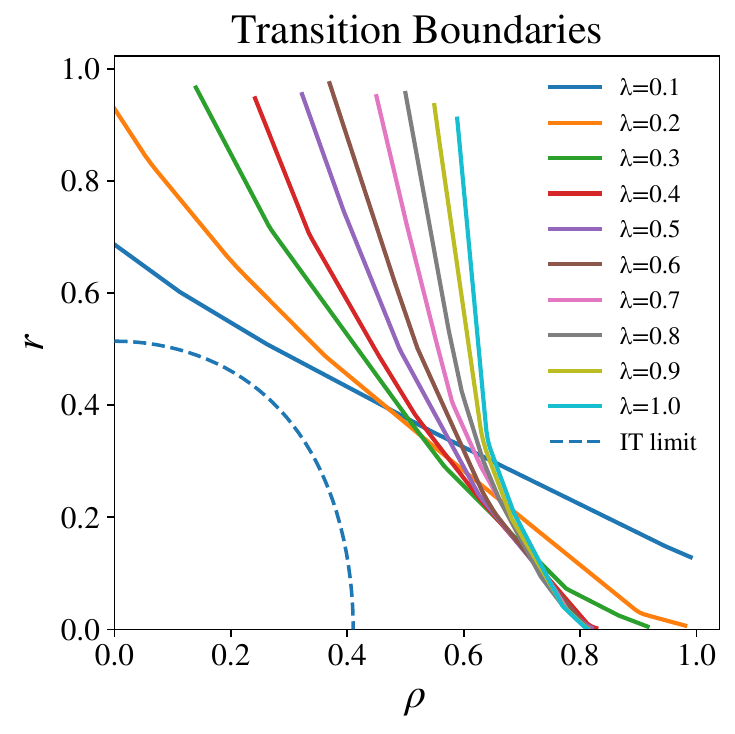}
\caption{Phase-transition boundaries of QPAlign under different regularization parameters $\lambda$, together with the information-theoretic exact recovery limit.} 
\label{fig:IT-QPAlign}
\end{figure}

Figure~\ref{fig:heatmap2} shows the corresponding result under the featured correlated \ER model with $p=0.5$. 
A qualitatively similar pattern is observed: the algorithm achieves high overlap once either $\rho$ or $r$ is sufficiently large. Together, these experiments confirm that our algorithm behaves stably across different correlation regimes and successfully interpolates between weak and strong signal cases, with $\overlap(\hat{\pi},\pi^*)$ ranging from $0$ to $1$ as $(\rho,r)$ varies from $(0,0)$ to $(1,1)$. The results show that our approach is effective for both weighted and unweighted graphs, which broadens the applicability relative to prior methods. 
See more comparisons with the benchmarks in Appendix~\ref{apd:synthetic-data}.
We also conduct ablation studies on synthetic data with $n=100$ and $d=16$ to verify the effectiveness of regularization. In the Gaussian Wigner model, using a positive regularization weight $\mu=0.1$ (instead of $\mu=0$) improves the overlap on 86.57\% of the parameter grid points, while in the \ER model the corresponding proportion is 60.74\%, where $\mu$ is the weight on the regularization term introduced in equation~\eqref{eq:program-final}.



\subsection{Real Data Analysis}
\paragraph{ACM-DBLP dataset}
The ACM-DBLP dataset~\cite{tang2008arnetminer} is a widely used benchmark for attributed graph alignment, containing 2,224 ground-truth matched pairs. In our construction, each node represents a paper from either the ACM or DBLP source, and edges are weighted by co-authorship relations.  
The ground-truth alignment is given by the set of papers appearing in both sources. 
We implemented the experiments with hyperparameters $\mu=0.01$, $T=1000$, $K=200$, step size $\eta=10^{-5}$, and $\lambda \in \sth{0,0.2,0.4,0.6,0.8,1}$. 

\paragraph{Douban (Online–Offline) dataset}
The Douban Online–Offline dataset~\cite{tang2008arnetminer} is another widely used benchmark for attributed graph alignment, consisting of two graphs that share 1,118 ground-truth matched pairs. Each node represents a user, with edges in the online graph encoding platform interactions (e.g., replying to a post) and edges in the offline graph capturing co-attendance at social events. Node features are given by user locations. The online graph strictly contains all users from the offline graph, and the ground-truth alignment is defined by the users present in both. We implemented the experiments with hyperparameters $\mu=0$, $T=1000$, $K=200$, step size $\eta=5\times 10^{-3}$, and $\lambda \in \sth{0,0.2,0.4,0.6,0.8,1}$.

Indeed, the ACM-DBLP dataset corresponds to a featured Gaussian-Wigner graph, while the Douban dataset is treated as a featured \ER graph, each representing different structural settings for graph alignment. We compare our method with three types of baselines: 1) based solely on edge structure (Grampa~\cite{fan2019spectral}, IsoRank~\cite{singh2008global}, Umeyama~\cite{umeyama1988eigendecomposition}, GW~\cite{peyre2016gromov}); 2) based solely on node features (MAP~\cite{dai2019database}, kNN); and 3) exploiting both edge structure and node features (FGW~\cite{titouan2019optimal}, REGAL~\cite{heimann2018regal}, PARROT~\cite{zeng2023parrot}).
To ensure a fair comparison, we follow the official implementations and parameter choices recommended in the original papers. Since the baselines are designed for different settings (edge-only, feature-only, or joint), { the results should be viewed within their respective information settings rather than as direct head-to-head comparisons.
The results are reported in Figure~\ref{fig:acm-dblp} and Table~\ref{tab:acm-dblp}.} 

\begin{figure}[htbp]
    \centering
    \includegraphics[width=0.35\linewidth]{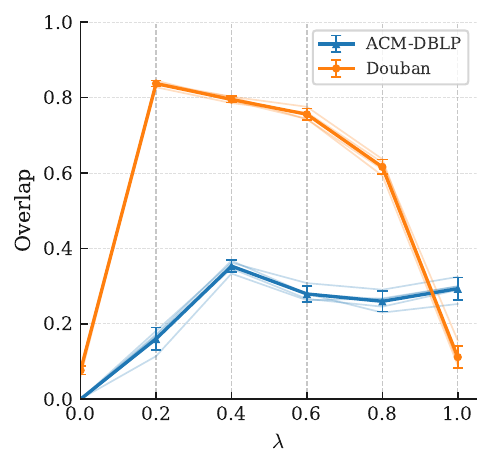}
    \caption{Overlap vs.\ $\lambda$ on ACM-DBLP and Douban datasets.}
    \label{fig:acm-dblp}
\end{figure}%

We report the experimental results averaged over 5 random seeds. The faint curves represent the results of individual runs, while the bold curves show their average.
To fairly compare with baselines using the corresponding information source in Table~\ref{tab:acm-dblp}, we conducted two ablated versions: $\lambda=0$, which corresponds to using only feature information, and $\lambda=1$, which corresponds to using only edge information.  The results in Figure~\ref{fig:acm-dblp} and Table~\ref{tab:acm-dblp} both demonstrate that combining the two sources of information yields performance that surpasses relying on either source alone.
See more details in Appendix~\ref{appendix:acm-dblp}. We also conduct experiments on spatial transcriptomic data; see Appendix~\ref{apd:stdata} for details.

\begin{table}[htbp]
    \centering
    \footnotesize
    \begin{tabular}{ccc}
      \toprule
       & ACM-DBLP & Douban\\
      \midrule
      QPAlign (max) & \textbf{0.3445} & 0.8370 \\
      FGW & 0.0018 & 0.2773 \\
      REGAL & 0.0301 & 0.1118 \\
      PARROT & 0.0441 & \textbf{0.8462} \\
      \midrule
      QPAlign ($\lambda=0$) & 0.0004 & 0.0767 \\
      MAP & 0.0004 & 0.0411 \\
      kNN & 0.0004 & 0.0725 \\
      \midrule
      QPAlign ($\lambda=1$) & 0.2896 & 0.1118 \\
      Grampa & 0.0746 & 0.0027 \\
      IsoRank & 0.0018 &  0.0000 \\
      Umeyama & 0.0346 & 0.0089 \\
      GW & 0.0202 & 0.0000 \\
      \bottomrule
    \end{tabular}
    \caption{Alignment accuracy in ACM-DBLP and Douban datasets.}
    \label{tab:acm-dblp}
\end{table}

\section{Discussions and Future Directions}
In this paper, we studied the graph alignment problem  where both weighted edges and node features are jointly observed under an unknown vertex permutation. We established sharp information-theoretic thresholds for both partial and exact recovery in the featured correlated Gaussian Wigner model, revealing how structural and feature correlations together govern the fundamental limits of alignment.
Our theoretical analysis establishes the optimal rates for both partial and exact recovery regimes. These results primarily depend on the analysis of the maximum likelihood estimator, where careful weighting of edge and feature information is selected to achieve optimal results. This provides a unified theoretical understanding of several previously studied alignment models and highlights the benefits of jointly leveraging both topology and attributes.

From an algorithmic perspective, we proposed QPAlign, which efficiently combines edge and feature information to achieve recovery with theoretical guarantees, confirming the achievability of the oracle solution and convergence. Empirical results on synthetic data and real datasets demonstrate that QPAlign performs effectively, achieving high-quality alignments and comparing favorably with existing baselines. 
There are also several promising directions for future work.
\begin{itemize}
    \item \emph{Extension to partially overlap.} Our framework can be extended to the setting where only a pair of subgraphs of the original graphs are correlated through a latent injective mapping. The optimal rate under this partially overlapping featured correlation model remains unknown.
    \item \emph{Statistical--computational gap.} The computational limits of this model remain unknown. A possible direction is to study this question via the low-degree framework~\cite{hopkins2017power,hopkins2018statistical}.
    \item \emph{Non-Gaussian and heavy-tailed distributions.} An interesting direction is to investigate whether our results under the Gaussian assumption can be extended to more general non-Gaussian settings, including heavy-tailed distributions.
\end{itemize}
\appendix

\section{Experimental Details}

\subsection{Synthetic Data}\label{apd:synthetic-data}

Figure~\ref{fig:heatmaps-n100} reports the results for the low-dimensional setting with $n=100$ and $d=16$. We again observe that the overlap increases monotonically with both $\rho$ and $r$, starting near zero when both correlations vanish and approaching one when either correlation becomes large. These results mirror the high-dimensional case in Figure~\ref{fig:heatmaps}, thereby confirming that our method remains effective in both low-dimensional and high-dimensional regimes.


\begin{figure}[htbp]
    \centering
    \begin{subfigure}[b]{0.4\textwidth}  
        \centering
        \includegraphics[width=\textwidth]{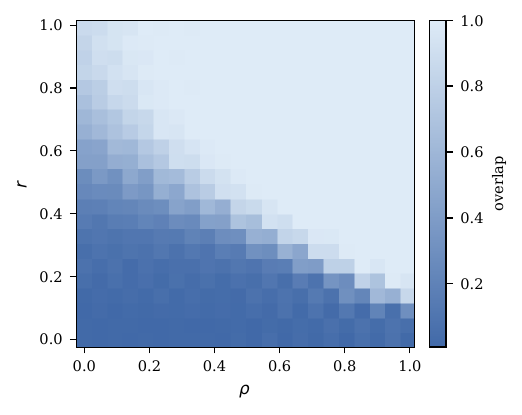}
        \caption{Featured correlated Gaussian Wigner model with $n=100$ and $d=16$.}
        \label{fig:heatmap1-n100}
    \end{subfigure}
    \hspace{0.04\textwidth} 
    \begin{subfigure}[b]{0.4\textwidth}
        \centering
        \includegraphics[width=\textwidth]{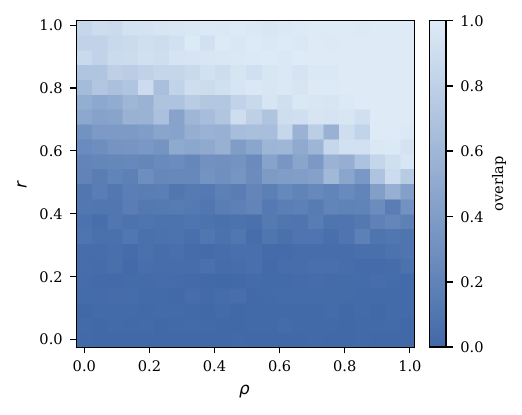}
        \caption{Featured correlated \ER model with $n=100$, $d=16$, and $p=0.5$.}
        \label{fig:heatmap2-n100}
    \end{subfigure}
    \caption{Overlap between the estimator $\hat{\pi}$ in Algorithm~\ref{alg:qap-relax} and the ground truth $\pi^*$ in two models with $n=100$ and $d=16$, evaluated across varying correlations $\rho\in [0,1]$ and $r\in [0,1]$.}
    \label{fig:heatmaps-n100}
\end{figure}


To facilitate a comprehensive comparison, we evaluate our approach against FGW with various fixed values of $r$, as well as against purely topology-based methods, including GW, Grampa, IsoRank, and Umeyama. All evaluations are conducted on synthetic featured Gaussian--Wigner graphs with $n=100$ vertices and $d=16$ dimensions. For the correlation parameter, we report results in terms of 
$\sigma = \sqrt{(1-\rho^2)/\rho^2} \in [0,0.5],$
which serves as a noise-to-signal ratio relative to the original graph, as showed in Figure~\ref{fig:overlap_vs_rho}. We adopt $\sigma$ instead of $\rho$ since several algorithms exhibit sharp performance transitions when $\rho \approx 1$ (i.e., $\sigma \approx 0$), making results easier to interpret under this reparametrization. In addition, we compare with FGW at fixed $\rho$ and with MAP across different values of $r \in [0,1]$ in Figure~\ref{fig:overlap_vs_r}. Because MAP degenerates and becomes numerically unstable at $r=1$, we replace the endpoint with $r=0.999$ for all methods to ensure consistent and stable evaluation. In the synthetic data experiments presented here, our method is evaluated with step size $\eta = 10^{-4}$, $T=400$, $K=80$,  $\lambda=0.1$, and $\mu=0.1$.

Figure~\ref{fig:overlap_vs_rho} reports the alignment accuracy as a function of $\sigma=\sqrt{(1-\rho^2)/\rho^2}$ with $\lambda = 0.05,0.1$, and $0.15$, respectively, where smaller $\sigma$ corresponds to stronger graph correlation. Our method consistently outperforms the purely edge-based baselines (GW, Grampa, IsoRank, Umeyama) and the joint edge–feature baseline FGW across different feature correlations $r$. Notably, even when $r$ is small (e.g., $r=0.1$), our approach achieves higher overlap than FGW under the same setting, indicating robustness to weak feature correlation. By contrast, classical spectral and matching-based methods (Grampa, IsoRank, Umeyama, GW) quickly degrade as noise increases.

\begin{figure}[htbp]
    \centering
    \begin{subfigure}[b]{0.32\textwidth}  
        \centering
        \includegraphics[width=\textwidth]{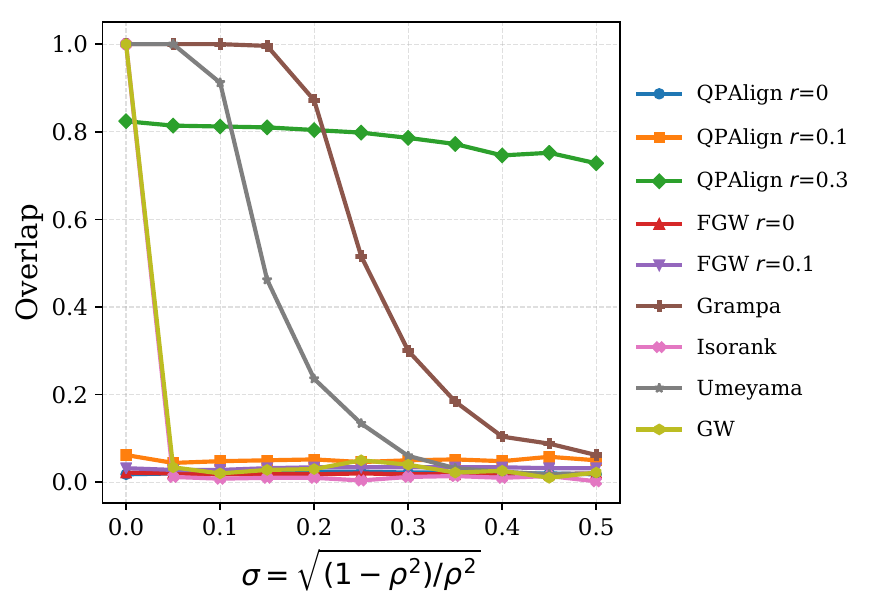}
        \caption{ $\lambda=0.05$.}
        \label{fig:overlap_vs_rho-005}
    \end{subfigure}
    \hfill
    \begin{subfigure}[b]{0.32\textwidth}
        \centering
        \includegraphics[width=\textwidth]{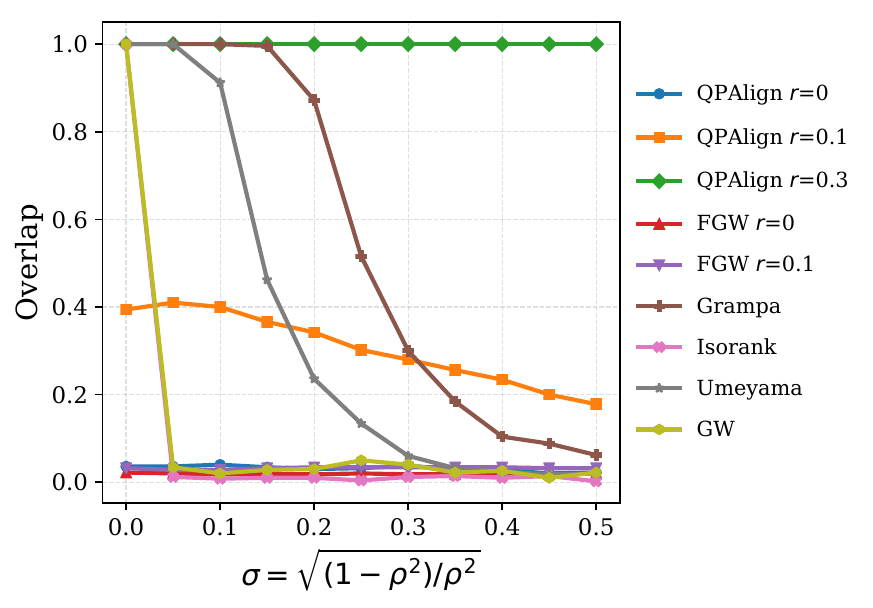}
        \caption{$\lambda=0.1$.}
        \label{fig:overlap_vs_rho-01}
    \end{subfigure}
    \hfill
    \begin{subfigure}[b]{0.32\textwidth}
        \centering
        \includegraphics[width=\textwidth]{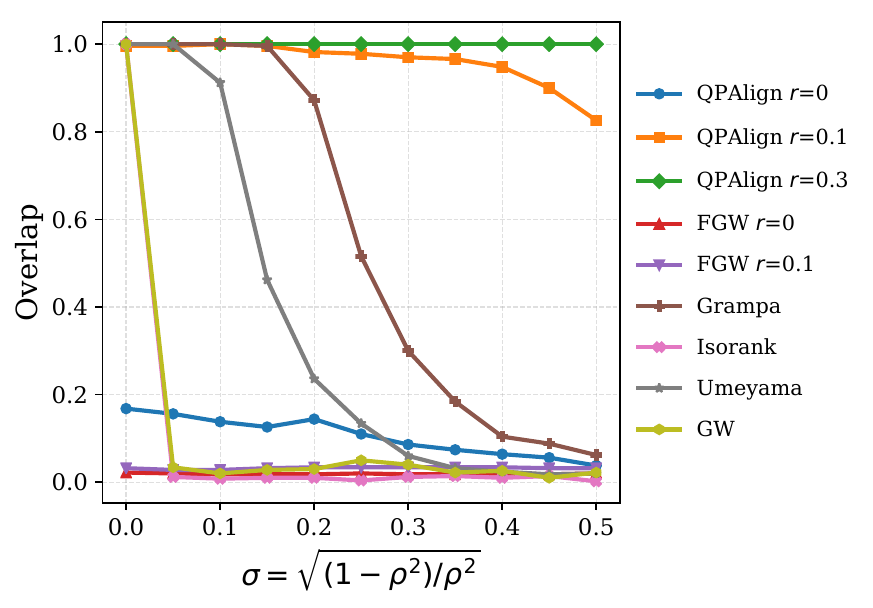}
        \caption{$\lambda=0.15$.}
        \label{fig:overlap_vs_rho-015}
    \end{subfigure}
    \caption{Overlap between the estimator $\hat{\pi}$ in Algorithm~\ref{alg:qap-relax} and the ground truth $\pi^*$ evaluated by different algorithms across varying correlations $\sigma\in [0,0.5]$ with different $\lambda$.}
    \label{fig:overlap_vs_rho}
\end{figure}

Figure~\ref{fig:overlap_vs_r} shows the overlap as a function of feature correlation $r$ with $\lambda = 0.05,0.1$, and $0.15$, respectively, under different edge correlations $\rho$. Our method again demonstrates superior performance, achieving near-perfect alignment at much smaller $r$ compared to FGW and MAP. For example, with $\rho=0.8$, our method reaches almost perfect overlap already at $r= 0.2$, whereas FGW and MAP require significantly larger $r$ to attain comparable accuracy. Overall, except for the degenerate case $r=0$ or $\rho=0$, our method consistently achieves higher overlap than existing baselines under the same $r$ or $\rho$.


\begin{figure}[htbp]
    \centering
    \begin{subfigure}[b]{0.32\textwidth}  
        \centering
        \includegraphics[width=\textwidth]{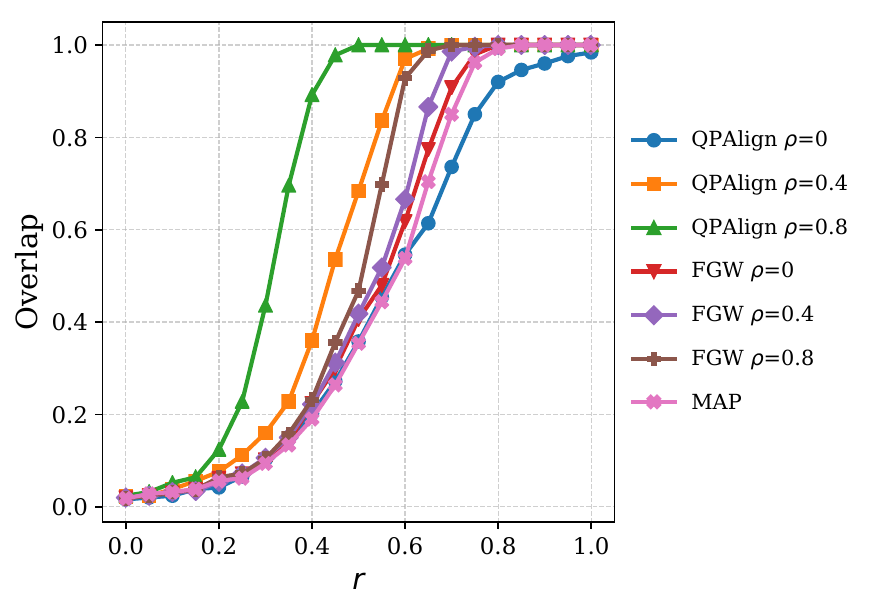}
        \caption{$\lambda=0.05$.}
        \label{fig:overlap_vs_r-005}
    \end{subfigure}
    \hfill
    \begin{subfigure}[b]{0.32\textwidth}
        \centering
        \includegraphics[width=\textwidth]{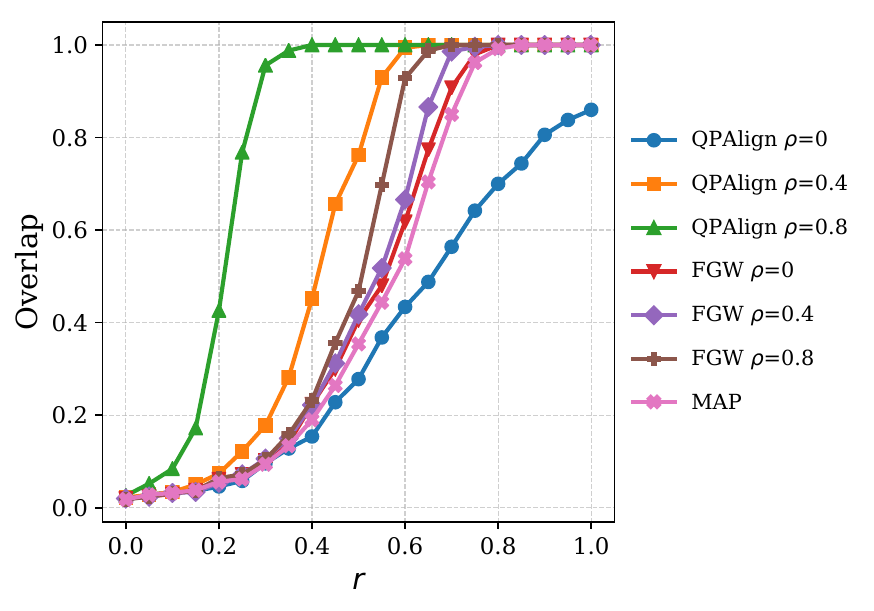}
        \caption{$\lambda=0.1$.}
        \label{fig:overlap_vs_r-01}
    \end{subfigure}
    \hfill
    \begin{subfigure}[b]{0.32\textwidth}
        \centering
        \includegraphics[width=\textwidth]{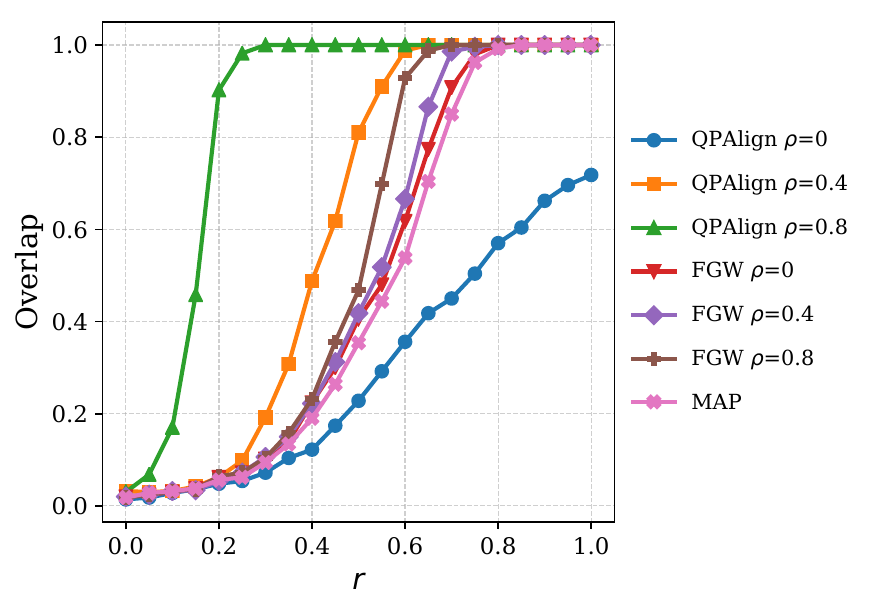}
        \caption{$\lambda=0.15$.}
        \label{fig:overlap_vs_r-015}
    \end{subfigure}
    \caption{Overlap between the estimator $\hat{\pi}$ in Algorithm~\ref{alg:qap-relax} and the ground truth $\pi^*$ evaluated by different algorithms across varying correlations $r\in [0,1]$ with different $\lambda$.}
    \label{fig:overlap_vs_r}
\end{figure}

To investigate the effect of non-Gaussianity and heavy tails, we also conduct experiments under a Student-$t$ model. Specifically, we consider the setting $n=100$ and $d=16$, and generate the edge weights and node features from Student-$t$ distributions with degrees of freedom $\nu \in \{3,5,10\}$. The results in Figure~\ref{fig:t-dis} show that QPAlign remains effective across all three choices of $\nu$: the overlap increases steadily as either $\rho$ or $r$ becomes larger, and the overall phase transition pattern is broadly consistent with that observed in the Gaussian case.

\begin{figure}[htbp]
    \centering
    \begin{subfigure}[b]{0.32\textwidth}  
        \centering
        \includegraphics[width=\textwidth]{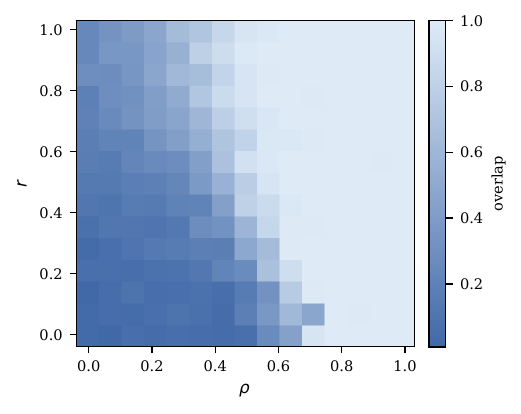}
        \caption{$\nu=3$.}
    \end{subfigure}
    \hfill
    \begin{subfigure}[b]{0.32\textwidth}
        \centering
        \includegraphics[width=\textwidth]{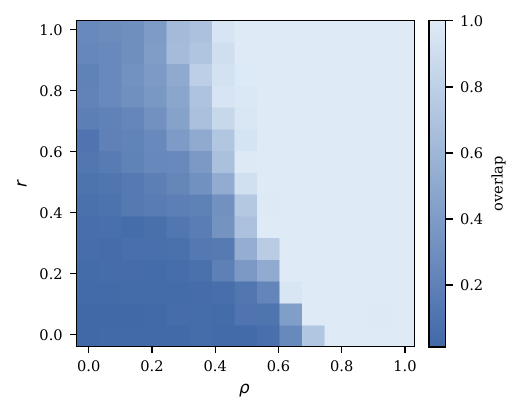}
        \caption{$\nu=5$.}
    \end{subfigure}
    \hfill
    \begin{subfigure}[b]{0.32\textwidth}
        \centering
        \includegraphics[width=\textwidth]{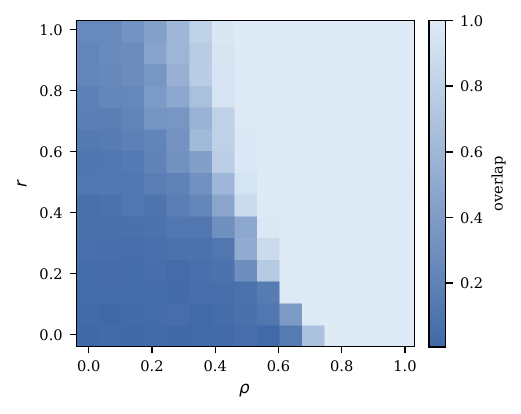}
        \caption{$\nu=10$.}
    \end{subfigure}
    \caption{Overlap between the estimator $\hat{\pi}$ and the ground truth $\pi^*$ under Student-$t$ distributions with degrees of freedom $\nu\in\{3,5,10\}$, for $n=100$ and $d=16$.}
    \label{fig:t-dis}
\end{figure}

To initialize for synthetic datasets, we leverage both feature and degree information to construct a mixed similarity matrix. Specifically, given node feature matrices $X=[\bm{x}_1^\top,\cdots,\bm{x}_n^\top]\in\mathbb{R}^{n\times d}$ and $Y=[\bm{y}_1^\top,\cdots,\bm{y}_n^\top]\in\mathbb{R}^{n\times d}$, we first compute a feature similarity matrix as $S_{\text{feat}}=\max(XY^\top,0)$, i.e., the inner product similarity clamped elementwise at zero. We then compute a degree similarity matrix by setting $d_1=A_1\mathbf{1}$ and $d_2=A_2\mathbf{1}$, and defining $S_{\deg}=(1+\lvert d_1\mathbf{1}^\top-\mathbf{1}d_2^\top\rvert)^{-1}$. These two components are combined into a mixed similarity matrix, $S = S_{\text{feat}} + \nu S_{\deg}$, which balances feature and structural signals. We empirically set $\nu=0.1$. The initial transport plan $\Pi^{(0)}$ is then obtained by applying the Sinkhorn algorithm with $K$ iterations to $S$. Across all datasets, we further employ the Barzilai–Borwein (BB) step-size rule~\cite{barzilai1988two} to adaptively determine the learning rate for the gradient descent updates.


\subsection{ACM-DBLP and Douban Datasets}\label{appendix:acm-dblp}

We introduce the construction of edges and features in the ACM-DBLP dataset as follows:
\begin{itemize}
    \item \emph{Features.} Features are constructed from authors and venues only, while paper titles are discarded. Author strings are lowercased, split on commas or semicolons, and tokenized by collapsing spaces into underscores. Venue names are tokenized into words, and merged phrase tokens are created. We use a pretrained RoBERTa model~\cite{liu2021robustly} to obtain embeddings of the corpus, and all representations are reduced to $d=256$ dimensions via PCA before whitening to zero mean and unit variance.
    \item \emph{Edges.} The graph is constructed by treating papers as nodes with edges defined by co-authorship and same-venue co-occurrence. We assign weights $\alpha_1=1.0$ for shared authors and $\alpha_2=0.5$ for shared venues, the weight edge $\beta_{ij}(G)$ is given by 
    \[
    \ti{\beta}_{ij}(G) = \alpha_1 \, C^{\text{author}}_{ij} + \alpha_2 \, C^{\text{venue}}_{ij}, \quad 
    \beta_{ij}(G) = \frac{\ti{\beta}_{ij}(G)-\expect{\ti{\beta}_{ij}(G)}}{\sqrt{\var\pth{\ti{\beta}_{ij}(G)}}},
    \]
    where $C^{\text{author}}_{ij}$ and $C^{\text{venue}}_{ij}$ denote co-occurrence counts.
\end{itemize}
We employ the same Douban dataset used in PARROT and other prior works, without any additional processing.
\paragraph{Baseline comparison.}
For completeness, we note two method-specific adjustments. First, REGAL assumes non-negative adjacency, which does not strictly match our setting; we therefore follow the standard workaround of omitting nodes with negative degree when running REGAL. Second, PARROT is designed as an anchor-based semi-supervised method, while our experiments do not assume anchor nodes; in this case, we adopt the ablated variant without anchors described in the original paper.

For ACM-DBLP and Douban datasets, we adopt a random initialization followed by a projection step. In practice, we initialize $\Pi^{(0)}$ with a random matrix (using fixed seeds to ensure reproducibility), and then project it onto the Birkhoff polytope using the Sinkhorn algorithm. This initialization avoids numerical instabilities that may arise from directly relying on feature or degree similarities in high-dimensional sparse settings.

\paragraph{Sensitivity analysis.} In Figure~\ref{fig:acm-dblp}, we report the experimental results over five random seeds for $\lambda \in \sth{0,0.2,0.4,0.6,0.8,1}$. The results indicate that the performance is stable with respect to both $\lambda$ and the initialization. We further provide a sensitivity analysis with respect to $\mu$ and $K$ in Table~\ref{tab:sensitivity}. The results again show that the performance remains stable across different choices of $\mu$ and $K$.

\begin{table}[htbp]
\centering
\small
\setlength{\tabcolsep}{5pt}
\renewcommand{\arraystretch}{1.15}
\begin{tabular}{c|ccccc|ccccc}
\hline
& \multicolumn{5}{c|}{\textbf{ACM-DBLP}} & \multicolumn{5}{c}{\textbf{Douban}} \\
\hline
$\mu$
& $0$ & $10^{-5}$ & $10^{-4}$ & $10^{-3}$ & $10^{-2}$
& $0$ & $10^{-5}$ & $10^{-4}$ & $10^{-3}$ & $10^{-2}$ \\
overlap
& $0.3230$ & $0.3219$ & $0.3205$ & $0.3166$ & $0.3249$
& $0.8220$ & $0.8216$ & $0.8229$ & $0.8188$ & $0.7919$ \\
\hline
$K$
& $20$ & $50$ & $100$ & $200$ & $400$
& $20$ & $50$ & $100$ & $200$ & $400$ \\
overlap
& $0.3253$ & $0.3237$ & $0.3246$ & $0.3249$ & $0.3251$
& $0.8220$ & $0.8220$ & $0.8220$ & $0.8220$ & $0.8220$ \\
\hline
\end{tabular}
\caption{Sensitivity analysis of $\mu$ and $K$ on ACM-DBLP and Douban.}
\label{tab:sensitivity}
\end{table}

\subsection{Spatial Transcriptomic Data}\label{apd:stdata}

Spatial transcriptomic (ST) data~\cite{staahl2016visualization} consists of gene expression profiles measured at spatially localized spots on a tissue slice, where each feature corresponds to a gene and the spatial coordinates represent the physical locations of the spots. We use an ST slice containing 255 spots with 7,998 gene features as the base dataset. To enable quantitative evaluation with ground-truth correspondences, we generate simulated slices by rotating the spot coordinates and resampling expression counts after adding a pseudocount $\delta$ to each gene in each spot. We consider five noise levels ($\delta \in {0,1,2,3,4,5}$) to model increasing experimental variability. 

In the experiment, we further examined the sensitivity of our method with respect to $\lambda$. Recall that $\lambda=0$ corresponds to using only the feature information, while $\lambda=1$ corresponds to relying solely on the structural information. Across the entire range of $\lambda$, our method consistently achieved strong alignment performance, indicating remarkable robustness to the choice of $\lambda$. In comparison with widely adopted baselines for spatial transcriptomics data alignment, including BBKNN~\cite{polanski2020bbknn} and Harmony~\cite{korsunsky2019fast}, our approach yields consistently superior results.
We implemented the experiments with parameters $T=400$, $K=80$, $\mu=0.1$, $\eta=10^{-5}$.

\begin{figure}[htbp]
    \centering
    \includegraphics[width=0.65\linewidth]{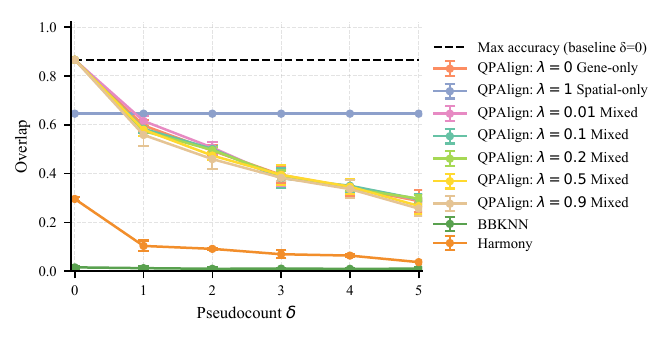}
    \caption{Alignment accuracy across different  $\lambda$ in ST data.}
    \label{fig:placeholder}
\end{figure}

In this following, we describe the experimental setup on simulated spatial transcriptomic data. 
Synthetic slices were generated by perturbing both spatial coordinates and transcript counts. 
Let $(X,Z)$ denote a transcript count matrix $X \in \mathbb{N}^{p \times n}$ and spot coordinates $Z \in \mathbb{R}^{2 \times n}$. 
To model sectioning variability, each coordinate was rotated by
\[
z'_i = R(\theta) z_i, \quad 
R(\theta) = \begin{bmatrix}\cos\theta & -\sin\theta \\ \sin\theta & \cos\theta\end{bmatrix},
\]
where $\theta$ was sampled uniformly from $[0,2\pi)$. 
Spots mapped outside the array were discarded to mimic tissue loss. Pairwise distances 
$d_{ik} = \lVert z_i - z_k \rVert_2$ were used to ensure invariance to global translation and rotation.  

Transcript counts were perturbed in two stages. First, spot-level UMI counts $N_i$ were sampled from a negative binomial distribution $N_i \sim \mathrm{NB}(r,p)$ with mean $\mu$ and variance $\sigma^2$, modeling over-dispersion. 
Second, given $N_i$, gene-level counts were drawn from a multinomial distribution with probabilities
\[
\pi_i = \frac{x_i + \delta}{\lVert x_i + \delta \rVert_1},
\]
where the pseudocount $\delta$ controls smoothing: small $\delta$ preserves heterogeneity, while large $\delta$ yields more uniform profiles. 
After rotation, duplicate grid positions were resolved by keeping one spot per location, preserving a one-to-one mapping.  
Besides, Z-score transformation was applied to both edge and feature information, and $L^2$ normalization was performed on the feature vectors to better align with our theoretical settings.

This perturbation procedure—combining rigid-body rotation, spot loss, and controlled count noise—produces slices that retain biological structure while reflecting realistic variability. 
It ensures that slices retain key biological structures while reflecting realistic experimental variability such as tissue dropout and technical noise.
For example, \cite{zeira2022alignment} used a similar framework to study alignment robustness under geometric perturbations.
We evaluated our method across different values of $\lambda$, where $\lambda=0$ and $\lambda=1$ correspond to feature-only and structure-only settings, respectively. 
The results show that our approach consistently balances edge and feature information, achieving robust performance superior to BBKNN~\cite{polanski2020bbknn} and Harmony~\cite{korsunsky2019fast}.

Finally, we introduce our intialization steps. We exploit domain-specific structure for initialization. Similar to synthetic datasets, we construct $S_{\text{feat}}$ and $S_{\deg}$, combine them with $\nu=0.1$, and apply the Sinkhorn algorithm with $K$ iterations to obtain $\Pi^{(0)}$.

\section{Proof of Theorems}
\subsection{Proof of Theorem~\ref{thm:main-partial}}

The impossibility result directly follows from Proposition~\ref{prop:impossible-partial}. We then show the possibility result.
By Proposition~\ref{prop:partial-possible},
\begin{align*}
        \prob{\sfd(\hat{\pi},\pi^*) = k}\le \exp\pth{-nh\pth{\frac{k}{n}}}\indc{k\le n-1}+\exp\pth{-2\log n}\indc{k=n}+\exp\pth{-\frac{\epsilon k\log n}{16}}.
    \end{align*}
Summing over $k\ge (1-\delta) n$, we have
\begin{align*}
    \sum_{k=(1-\delta)n}^{n-1} \exp\pth{-nh\pth{\frac{k}{n}}}&\le \sum_{k=1}^{n-1} \exp\left( -nh \left( \frac{k}{n} \right) \right)\\ 
    &\stackrel{\mathrm{(a)}}\leq 2 \sum_{1 \leq k \leq n/2} \exp\left( -k \log\frac nk \right)\\
    &\le 2\sum_{k=1}^{10\log n}\exp(-k\log \frac nk)+2\sum_{10\log n\le k\le n/2}2^{-k}\\
    &\le 2e^{-\log n}\cdot 10\log n+4\cdot 2^{-10\log n}= n^{-1 + o(1)},
\end{align*}
where $\mathrm{(a)}$ follows from \( h(x) = h(1 - x) \) and \( h(x) \geq x \log \frac{1}{x} \). Since \begin{align*}
    \sum_{k\ge (1-\delta )n}\exp\left( -\frac{1}{32} \epsilon k \log n \right) \leq \frac{\exp\left( -\frac\epsilon{32} (1-\delta) n \log n\right)}{1 - \exp \left( -\frac{\epsilon}{32} \log n \right)}  = n^{-\Omega(n)}, 
\end{align*}
we obtain that \begin{align}
    \nonumber &~\sum_{k=(1-\delta)n}^{n} \prob{\sfd(\hat{\pi},\pi^*) = k}\\\nonumber \le&~\sum_{k=(1-\delta)n}^{n}\qth{\exp\pth{-nh\pth{\frac{k}{n}}}\indc{k\le n-1}+\exp\pth{-2\log n}\indc{k=n}+\exp\pth{-\frac{\epsilon k\log n}{32}}}\\\label{eq:proof-thm1}\le&~n^{-1+o(1)}+n^{-2}+n^{-\Omega(n)}.
\end{align}
Since $\prob{\overlap(\hat\pi,\pi^*)\ge \delta}\ge 1- \sum_{k=(1-\delta)n}^{n} \prob{\sfd(\hat{\pi},\pi^*)=k}$, we finish the proof of Theorem~\ref{thm:main-partial}.

\subsection{Proof of Theorem~\ref{thm:exact-main}}

The impossibility results directly follows from Proposition~\ref{prop:impossible-exact}. When $n\log\pth{\frac{1}{1-\rho^2}}+d\log\pth{\frac{1}{1-r^2}}\ge (4+\epsilon)\log n$, we have \begin{align*}
    n\log\pth{\frac{1}{1-\rho^2}}+2d\log\pth{\frac{1}{1-r^2}}\ge (4+\epsilon)\log n,
\end{align*}
and thus~\eqref{eq:proof-thm1} holds for $\delta=1-\frac{\epsilon}{16}$. It remains to upper bound $\sum_{k=1}^{\epsilon n/16}\prob{ \sfd(\hat{\pi},\pi^*)=k}$. By Proposition~\ref{prop:exact-possible},\begin{align*}
    \sum_{k=1}^{{\epsilon n}/{16}} \prob{ \sfd(\hat{\pi},\pi^*)=k}\le \sum_{k=1}^{{\epsilon n}/{16}} \exp\pth{-\frac{\epsilon}{8}k\log n}\le \frac{\exp\pth{-\epsilon \log n/8}}{1-\exp\pth{-\epsilon \log n/8}}.
\end{align*}
Combining this with~\eqref{eq:proof-thm1}, we finish the proof of Theorem~\ref{thm:exact-main}.

\section{Proof of Propositions}

\subsection{Proof of Proposition~\ref{prop:partial-possible}}\label{apd:proof-partial-possible}

It is shown in~\cite{wu2022settling} and~\cite{dai2019database} that when $n\log(1/(1-\rho^2))\ge (4+\epsilon)\log n$ or $d\log(1/(1-r^2))\ge (4+\epsilon)\log n$ there exists an estimator $\hat{\pi}$ such that $\prob{\hat{\pi}=\pi^*} = 1-o(1)$. In this paper, we focus on the remaining regime, where $n\log(1/(1-\rho^2)) \vee d\log(1/(1-r^2))\le C_0\log n$ for some universal constant $C_0>4$, where $a\vee b = \max(a,b)$ for any $a,b\in\mathbb{R}$. Recall that we assume $d= \omega(\log n)$, which implies that $\rho,r=o(1)$. 

For any bijective mappings $\pi':V(G_1)\mapsto V(G_2)$, let $F_{\pi'}\triangleq\sth{v\in V(G_1):\pi^*(v) = \pi'(v)}$ be the set of fixed points. Recall that $\maT_k=\sth{\pi\in \maS_n:\sfd(\pi,\pi^*) 
= k}$ and \begin{align*}
    \sth{\sfd(\hat{\pi},\pi^*) = k}\subseteq \sth{\exists \pi'\in \maT_k:\score_{\pi^*}(G_1,G_2)\le \score_{\pi'}(G_1,G_2)},
\end{align*}
where $\score_{\pi}(G_1,G_2) = \varphi(\rho) \sum_{e\in E(G_1)} \beta_e(G_1) \beta_{\pi(e)}(G_2)+\varphi(r)\sum_{v\in V(G_1)} \bm{x}_v \bm{y}_{\pi(v)}$. Let $G[F]$ denote the induced subgraph of $G$ over a vertex set $F$. Then for any $\tau\in \mathbb{R}$, we have that \begin{align*}
    &~\sth{\sfd(\hat{\pi},\pi^*) = k}\\\subseteq&~ \sth{\exists \pi'\in \maT_k:\score_{\pi^*}(G_1,G_2)\le \score_{\pi'}(G_1,G_2)}\\=&~ \sth{\exists\pi'\in \maT_k:\qth{\score_{\pi^*}(G_1,G_2)-\score_{\pi^*}(G_1[F_{\pi'}],G_2[F_{\pi'}])}\le \qth{\score_{\pi'}(G_1,G_2)-\score_{\pi'}(G_1[F_{\pi'}],G_2[F_{\pi'}])}}\\\subseteq&~\sth{\exists\pi'\in \maT_k:{\score_{\pi^*}(G_1,G_2)-\score_{\pi^*}(G_1[F_{\pi'}],G_2[F_{\pi'}])}<\tau}\\&\cup\sth{\exists\pi'\in \maT_k:{\score_{\pi'}(G_1,G_2)-\score_{\pi'}(G_1[F_{\pi'}],G_2[F_{\pi'}])}\ge \tau}.
\end{align*}
We then bound the two events separately.

\subsubsection{Bad Event of Weak Signal}

We first upper bound $\prob{\exists\pi'\in \maT_k:{\score_{\pi^*}(G_1,G_2)-\score_{\pi^*}(G_1[F_{\pi'}],G_2[F_{\pi'}])}<\tau}$. We write $F=F_{\pi'}$ when $\pi'$ is given.
Let $N_k = \binom{n}{2}-\binom{n-k}{2}$.
Without loss of generality, we define $E(G_1)\backslash\binom{F}{2} = \sth{e_1,e_2,\cdots, e_{N_k}}$ and $V(G_1)\backslash F = \sth{v_1,v_2,\cdots,v_{k}}$. Let $X,Y\in \mathbb{R}^{(N_k+dk)\times 1}$ be defined as  \begin{align*}
    X&\triangleq \pth{\beta_{e_1}(G_1),\beta_{e_2}(G_1),\cdots,\beta_{e_{N_k}}(G_1),\bm{x}_{v_1}^\top,\bm{x}_{v_2}^\top,\cdots,\bm{x}_{v_k}^\top}^\top,\\
    Y&\triangleq \pth{\beta_{\pi^*(e_1)}(G_2),\beta_{\pi^*(e_2)}(G_2),\cdots,\beta_{\pi^*(e_{N_k})}(G_2),\bm{y}_{\pi^*(v_1)}^\top,\bm{y}_{\pi^*(v_2)}^\top,\cdots,\bm{y}_{\pi^*(v_k)}^\top}^\top.
\end{align*}
Let $W = \score_{\pi^*}(G_1,G_2)-\score_{\pi^*}(G_1[F],G_2[F])$. Then, we have that $W=X^\top AY$, where $A = \mathrm{diag}\sth{\varphi(\rho) I_{N_k},\varphi(r)I_{dk}}$ with $\Vert A\Vert_F^2 = \varphi(\rho)^2 N_k+\varphi(r)^2 dk$ and $\Vert A\Vert_2 = \varphi(\rho)\vee \varphi(r)$. The following Lemma provides concentration for $W$.

\begin{lemma}\label{lem:hwforweaksignal}
    There exists a universal constant $C$, such that with probability at least $1-\delta_0$,
    $$|W-(\rho\varphi(\rho)N_k+r\varphi(r)kd)|\le  C\pth{\Vert A\|_F\sqrt{\log\frac{1}{\delta_0}}+\|A\|_2\log \frac{1}{\delta_0}}.$$
\end{lemma}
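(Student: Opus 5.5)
The plan is to reduce $W=X^\top AY$ to a quadratic form in a standard Gaussian vector and then apply the Hanson--Wright inequality.

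\emph{Step 1: the joint law of $(X,Y)$.} Under $\pi^*$, the coordinates of $X$ and $Y$ come in correlated pairs. Each pair $(X_i,Y_i)$ is a pair of correlated standard normals, with correlation $\rho$ for the $N_k$ edge coordinates and $r$ for the $dk$ feature coordinates. Distinct pairs are independent. This holds because the edges $e_1,\dots,e_{N_k}$ are distinct, the vertices $v_1,\dots,v_k$ are distinct, $\pi^*$ is a bijection, and features are independent of edges.

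\emph{Step 2: decorrelate.} For each coordinate $i$ with correlation $c_i\in\{\rho,r\}$, write
\[
Y_i=c_iX_i+\sqrt{1-c_i^2}\,Z_i,
\]
where $Z=(Z_i)$ is standard Gaussian and independent of $X$. Setting $g=(X^\top,Z^\top)^\top\in\mathbb{R}^{2m}$ with $m=N_k+dk$, we get $W=g^\top Mg$ for
\[
M=\begin{bmatrix} AC & \tfrac12 AD\\ \tfrac12 AD & 0\end{bmatrix},\qquad C=\mathrm{diag}(c_i),\quad D=\mathrm{diag}\bigl(\sqrt{1-c_i^2}\bigr).
\]
The mean is
\[
\mathbb{E}W=\tr(AC)=\rho\varphi(\rho)N_k+r\varphi(r)kd,
\]
which is exactly the centering in the lemma. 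The off-diagonal blocks contribute nothing to the mean since $X$ and $Z$ are independent.

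\emph{Step 3: apply Hanson--Wright.} The entries of $g$ are independent standard normals, so their sub-Gaussian norm is a universal constant. Hanson--Wright~\cite{hanson1971bound} gives
\[
\mathbb{P}\bigl(|g^\top Mg-\mathbb{E}g^\top Mg|>t\bigr)\le 2\exp\Bigl(-c\min\Bigl(\tfrac{t^2}{\|M\|_F^2},\tfrac{t}{\|M\|_2}\Bigr)\Bigr).
\]

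\emph{Step 4: compare the norms of $M$ and $A$.} Since $A$, $C$, $D$ are diagonal and $c_i^2+(1-c_i^2)=1$, we have
\[
\|M\|_F^2=\sum_i A_{ii}^2\Bigl(c_i^2+\tfrac12(1-c_i^2)\Bigr)\le\|A\|_F^2.
\]
For the operator norm, $\|M\|_2\le\|AC\|_2+\|AD\|_2\le 2\|A\|_2$.

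\emph{Step 5: conclude.} Choose $t=C\bigl(\|A\|_F\sqrt{\log(1/\delta_0)}+\|A\|_2\log(1/\delta_0)\bigr)$ with $C$ a large universal constant. Then both terms in the minimum are at least $\log(2/\delta_0)$, up to a constant absorbed into $C$. Hence the failure probability is at most $\delta_0$, which gives the lemma. For $\delta_0$ close to $1$, adjust $C$ so the factor $2$ is absorbed, or note the bound is trivial there.

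There is no real obstacle. The only care needed is to apply Hanson--Wright to the decorrelated independent vector $g$ rather than to $(X,Y)$, which has dependent coordinates, and to check the norm comparisons in Step 4.
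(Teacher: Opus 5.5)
Your proof is correct. It differs from the paper's only in how you turn the dependent pair $(X,Y)$ into a vector with independent Gaussian coordinates before invoking Hanson--Wright.

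The paper uses polarization, $W=\tfrac14(X+Y)^\top A(X+Y)-\tfrac14(X-Y)^\top A(X-Y)$, which is valid since $A$ is diagonal. Each of $X+Y$ and $X-Y$ has independent centered Gaussian coordinates with variances $2\pm2c_i\le4$. The paper applies Hanson--Wright to each form with the \emph{same} matrix $A$ at level $\delta_0/2$ and takes a union bound, so no new norms need computing.

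Your decorrelation $Y=CX+DZ$ needs only one application, to the unit-variance vector $g=(X,Z)$. The price is the extra bookkeeping for $M$, which you did correctly. In exchange, your route carries over unchanged to a non-symmetric coupling matrix, where polarization no longer reproduces $X^\top AY$.

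The two routes are really the same computation in different bases. Each $2\times2$ block $\bigl[\begin{smallmatrix} A_{ii}c_i & A_{ii}D_{ii}/2\\ A_{ii}D_{ii}/2 & 0\end{smallmatrix}\bigr]$ of $M$ has eigenvalues $\tfrac{A_{ii}}{2}(c_i\pm1)$. The eigenvectors correspond to the normalized $X_i\pm Y_i$, so diagonalizing $M$ gives exactly the paper's decomposition. It also shows the sharper bound $\|M\|_2\le\|A\|_2$.

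One small correction to your Step 5 concerns $\delta_0\to1$. There $\log(1/\delta_0)\to0$ and $2\exp(-a\log(1/\delta_0))\to2$, so no choice of $C$ absorbs the factor $2$, and the inequality is not trivial in that regime either. Simply restrict to $\delta_0\le1/2$, where $2\delta_0^{a}\le\delta_0$ once $a\ge2$. This covers every use of the lemma in the paper ($\delta_0\le n^{-2}$), and the paper's own proof makes the same implicit restriction.
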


Pick $\tau=(\rho\varphi(\rho)N_k+r\varphi(r)kd)-a_k$, where 
\begin{align}\label{eq:choice_of_tau}
    a_k=\begin{cases}
    3C\sqrt{(\rho^2N_k+r^2kd)2nh(\frac{k}n)},&k\le n-1,\\
    3C\sqrt{(n\rho^2+dr^2)n\log n},&k=n,
\end{cases}
\end{align}
and  $h(x)=-x\log x-(1-x)\log(1-x)$ is the binary entropy function.

\paragraph{Case 1: $k=n$.} We choose $\delta_0 = \exp\pth{-2\log n}$ in Lemma~\ref{lem:hwforweaksignal}. Recall that $\rho,r = o(1)$.
Then, with probability at least $1-\delta_0$,\begin{align*}
    \vert W-(\rho \varphi(\rho) N_k+r\varphi(r)dk)\vert 
    &\le C\qth{\sqrt{\varphi(\rho)^2 N_k+\varphi(r)^2dk}\sqrt{2\log n}+\pth{(\varphi(\rho)\vee\varphi(r))2\log n}}\\
    &\le C\sqrt{4n\log n(dr^2+n\rho^2)}+C\sqrt{n\log n(n\rho^2+dr^2)}\\&=3C\sqrt{(n\rho^2+dr^2)n\log n} =a_n,
\end{align*}
where the last inequality follows from 
$$(\varphi(\rho)\vee \varphi(r))2\log n\le \sqrt{n\rho^2+dr^2}\log n\le C\sqrt{n\log n(n\rho^2+dr^2)}.$$
Consequently, we have $\prob{W\le \tau}\le \exp\pth{-2\log n}$.

\paragraph{Case 2: $\delta n\le k\le n-1$.} We choose $\delta_0 = \exp\pth{-2nh\pth{\frac{k}{n}}}$ in Lemma~\ref{lem:hwforweaksignal}.
Then, with probability $1-\delta_0$, we have \begin{align*}
    &~|W-(\rho\varphi(\rho)N_k+r\varphi(r)kd)|\\\le&~  C\left[\left(\sqrt{\varphi(\rho)^2N_k+\varphi(r)^2dk}\sqrt{2nh(\frac kn)}\right)+ \left((\varphi(\rho)\vee \varphi(r))2nh(\frac kn)\right)\right]\\
    \le&~  C\left(\Big(\sqrt{4(\rho^2N_k+r^2dk)}\sqrt{2nh(\frac kn)}\Big)+\Big((\varphi(\rho)\vee \varphi(r))2nh(\frac kn)\Big) \right)\\
    \le&~ 3C\sqrt{(\rho^2N_k+r^2dk)2nh(\frac{k}n)},
\end{align*}
where the last inequality follows from 
\begin{align*}
    (\varphi(\rho)\vee \varphi(r))2nh(\frac kn)&\le \sqrt{(\rho^2N_k+r^2kd)2nh(\frac kn)}\sqrt{\frac{4n(\rho^2+r^2)}{(\rho^2N_k+r^2kd)}}\\
    &\le\sqrt{(\rho^2N_k+r^2kd)2nh(\frac kn)}\sqrt{\frac{8n(\rho^2+r^2)}{k(\rho^2n+r^2d)}}, 
\end{align*}
and $$\frac{8n(\rho^2+r^2)}{k(\rho^2n+r^2d)}\le \frac{16n}{\delta n(\rho^2n +r^2 d)}\le \frac{16n}{\delta n(4+\epsilon/2)\log n}\le  1,$$
where the last inequality is because $\rho,r=o(1)$ and $n\log(\frac{1}{1-\rho^2})+d\log(\frac{1}{1-r^2})\ge (4+\epsilon)\log n$ implies $n\rho^2+dr^2\ge (4+\epsilon/2)\log n$.
Consequently, $\prob{W\le \tau}\le \exp\pth{-2nh\pth{\frac{k}{n}}}$ when $k\le n-1$. 
By the union bound, we obtain \begin{align}
    \nonumber &~\prob{\exists\pi'\in \maT_k:{\score_{\pi^*}(G_1,G_2)-\score_{\pi^*}(G_1[F_{\pi'}],G_2[F_{\pi'}])}<\tau}\\
    \nonumber\le&~\prob{\bigcup_{F\subseteq V(G_1):|F| = n-k}\sth{ 
    {\score_{\pi^*}(G_1,G_2)-\score_{\pi^*}(G_1[F],G_2[F])}<\tau
    }}\\
    \nonumber\le&~\binom{n}{k}\prob{ 
    {\score_{\pi^*}(G_1,G_2)-\score_{\pi^*}(G_1[F],G_2[F])}<\tau
    }\\
    \nonumber\le&~\binom{n}{k}\prob{W\le \tau}\indc{k\le n-1}+\prob{W\le \tau}\indc{k=n}\\\label{eq:bad-event-signal-final}\le&~ \exp\pth{-nh\pth{\frac{k}{n}}}\indc{k\le n-1}+\exp\pth{-2\log n}\indc{k=n},
\end{align}
where the last inequality is because $\binom{n}{k}\le \exp\pth{nh\pth{\frac{k}{n}}}$.

\subsubsection{Bad Event of Strong Noise}
We then upper bound $\prob{\exists\pi'\in \maT_k:{\score_{\pi'}(G_1,G_2)-\score_{\pi'}(G_1[F_{\pi'}],G_2[F_{\pi'}])}\ge\tau}$.
Given $\pi'$, we define $Z\triangleq \score_{\pi'}(G_1,G_2)-\score_{\pi'}(G_1[F_{\pi'}],G_2[F_{\pi'}])$. 
We also write $F=F_{\pi'}$ when $\pi'$ is given.
By Chernoff's inequality, for any $t>0$,\begin{align*}
    \prob{Z\ge \tau}\le e^{-t\tau}\expect{e^{tZ}}.
\end{align*}
In order to compute the moment generating function $\expect{e^{tZ}}$, we introduce the definition of orbits.

\paragraph{Cycle decomposition} For any $\sigma\in \maS_n$, it induces a permutation $\sigma^\sfE$ on the edge set $\binom{V(G_1)}{2}$ with $\sigma^\sfE((u,v))\triangleq (\sigma(u),\sigma(v))$ for $u,v\in V(G_1)$. We refer to $\sigma$ and $\sigma^\sfE$ as a node permutation and edge permutation. Each permutation can be decomposed as disjoint cycles known as orbits. Orbits of $\sigma$ (resp. $\sigma^\sfE$) are referred as \emph{node orbits} (resp. \emph{edge orbits}). For example, a node orbit $(u_1,u_2,\cdots, u_k)$ indicates that $u_{i+1} = \sigma(u_i)$ for $1\le i\le k-1$ and $u_1 = \sigma(u_k)$. Let $n_k$ (resp. $N_k$) denote the number of $k$-node (resp. $k$-edge) orbits in $\sigma$ (resp. $\sigma^\sfE$).

For any $\pi'\in \maT_k$,
let $\sigma\triangleq(\pi^*)^{-1}\circ\pi'$. Define $\maC^\sfV_i$ and $\maC^\sfE_i$ the set of \emph{node orbits} and \emph{edge orbits} of length $i$ induced by $\sigma$, respectively. Denote $\maC^\sfV = \cup_{i\ge 1}\maC^\sfV_i$ and $\maC^\sfE = \cup_{i\ge 1}\maC^\sfE_i$.
Then, $V(G_1)=\cup_{i\ge 1} \sth{v:v\in \maC_i^\sfV}, E(G_1) = \cup_{i\ge 1}\sth{e:e\in \maC_i^\sfE}$, and $\maC^\sfV_1= F$.
Let \begin{align}\label{eq:ZEZV}
    Z^\sfE = \varphi (\rho)\sum_{e\in E(G_1)\backslash \binom{F}{2}} \beta_e(G_1) \beta_{\pi'(e)}(G_2),\quad Z^\sfV = \varphi(r)\sum_{v\in V(G_1)\backslash F}\bm{x}_v\bm{y}_{\pi'(v)}.
\end{align}
Then $Z = Z^\sfV+Z^\sfE$. Since $Z^\sfV$ and $Z^\sfE$ are independent, we obtain that \begin{align}\label{eq:mgf-independent}
    \expect{e^{tZ}} = \expect{e^{tZ^\sfV}} \expect{e^{tZ^\sfE}}.
\end{align}
We then derive the upper bounds for $\expect{e^{tZ^\sfV}}$ and $\expect{e^{tZ^\sfE}}$, respectively. For any edge cycle $C=\sth{e_1,e_2,\cdots,e_{|C|}}$ with $e_{i+1} = \sigma^\sfE(e_i)$ for all $1\le i\le |C|-1$ and $e_1 = \sigma^\sfE(e_{|C|})$, we define the cumulant generating function as \begin{align*}
    \kappa^\sfE_{|C|}(t) = \log\expect{\exp\pth{t\varphi(\rho)\sum_{i=1}^{|C|} \beta_{e_i}(G_1)\beta_{\pi'(e_i)}(G_2)}},
\end{align*}
where we define $(u_{|C|+1},v_{|C|+1}) = (u_1,v_1)$. Similarly, for any node cycle $C=\sth{v_1,\cdots,v_{|C|}}$, we define $v_{|C|+1} = v_1$ and \begin{align*}
    \kappa^\sfV_{|C|}(t) = \log\expect{\exp\pth{t\varphi(r)\sum_{i=1}^{|C|}\bm{x}_{v_i}\bm{y}_{\pi'(v_i)}}}.
\end{align*}

The lower-order cumulants can be calculated directly:\begin{align*}
    \kappa^\sfE_1(t) &= -\frac12\log(1-2t\rho\varphi(\rho)-t^2\varphi^2(\rho)(1-\rho^2)),\\ \kappa^\sfV_1(t) &= -\frac d2\log(1-2tr\varphi(r)-t^2\varphi^2(r)(1-r^2)),\\\kappa_2^\sfE(t) &=-\frac12\log(1-2t^2\varphi^2(\rho)(1+\rho^2)+t^4\varphi^4(\rho)(1-\rho^2)^2) ,\\\kappa_2^\sfV(t)&=-\frac d2\log(1-2t^2\varphi^2(r)(1+r^2)+t^4\varphi^4(r)(1-r^2)^2) .
\end{align*}
Let $N_k = \binom{n}{2}-\binom{n-k}{2}$.
The following Lemma provides an upper bound on the cumulant function $\log\expect{\exp(tZ)}$.
\begin{lemma}\label{lem:cumulant}
    If $\sfd(\pi^*,\pi')=k$, for any $0<t\le (\rho^{-1}-2)\wedge (r^{-1}-2)$, we have \begin{align*}
        \log\expect{\exp(tZ)}\le \frac{N_k}{2} \kappa_2^\sfE(t)+ \frac{k}{2}\pth{\kappa_1^\sfE(t)-\frac{1}{2}\kappa_2^\sfE(t)+\frac{1}{2}\kappa_2^\sfV(t)}.
    \end{align*}
\end{lemma}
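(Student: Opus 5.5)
The plan is to follow the cycle-decomposition argument of \cite{wu2022settling}, running the edge part and the feature part in parallel. By \eqref{eq:mgf-independent}, $\log\expect{e^{tZ}}=\log\expect{e^{tZ^\sfV}}+\log\expect{e^{tZ^\sfE}}$, so we bound each term separately. The orbits of $\sigma=(\pi^*)^{-1}\circ\pi'$ partition the non-fixed nodes and edges. Along an orbit the pairs $(\beta_{e}(G_1),\beta_{\pi^*(e)}(G_2))$ are i.i.d.\ correlated pairs, and $Z$ couples $\beta_{e_i}(G_1)$ with $\beta_{\pi^*(e_{i+1})}(G_2)$. Hence the sums over distinct orbits are independent, and
\begin{align*}
\log\expect{e^{tZ^\sfE}}=\sum_{C\in\maC^\sfE\setminus\binom{F}{2}}\kappa^\sfE_{|C|}(t),\qquad \log\expect{e^{tZ^\sfV}}=\sum_{C\in\maC^\sfV,|C|\ge 2}\kappa^\sfV_{|C|}(t).
\end{align*}
Here the edge sum also includes the length-one edge orbits not inside $F$, namely edges $\{u,v\}$ with $\sigma(u)=v$ and $\sigma(v)=u$, which are fixed as unordered pairs.

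The first key step is the standard cycle inequality $\kappa_\ell(t)\le \frac{\ell}{2}\kappa_2(t)$ for $\ell\ge 2$, applied to both $\kappa^\sfE$ and $\kappa^\sfV$. To get it, compute $\kappa_\ell$ exactly: the quadratic form on a cycle of length $\ell$ is circulant, so its eigenvalues are indexed by the $\ell$-th roots of unity $\omega$. This gives
\begin{align*}
\kappa_\ell(t)=-\tfrac{1}{2}\sum_{\omega^\ell=1}\log\big(1-t^2\varphi^2(1-\rho^2)-2t\rho\varphi\,\mathrm{Re}\,\omega\big),
\end{align*}
up to the factor $d$ for features. Equivalently $\kappa_\ell=-\tfrac12\log\big(\lambda_1^\ell+\lambda_2^\ell-2\big)$-type expressions in the two transfer eigenvalues, as in \cite{wu2022settling}. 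The bound $\kappa_\ell\le \frac{\ell}{2}\kappa_2$ then follows from $(\lambda_1^\ell-1)(1-\lambda_2^\ell)\ge[(\lambda_1^2-1)(1-\lambda_2^2)]^{\ell/2}$-type convexity. The restriction $t\le(\rho^{-1}-2)\wedge(r^{-1}-2)$ is what keeps all arguments of the logarithms positive and the eigenvalues in the admissible range. I expect this verification, especially checking that the constraint on $t$ suffices for both channels, to be the main technical obstacle. The fallback is simply to cite the corresponding lemma of \cite{wu2022settling}, since each coordinate of the feature vectors and each edge is a scalar Gaussian pair of the same form.

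Next comes the counting. For nodes, every non-fixed node lies in an orbit of length $\ge2$, so $\log\expect{e^{tZ^\sfV}}\le \frac{k}{2}\kappa_2^\sfV(t)$. For edges, let $n_2$ be the number of node 2-cycles; these produce exactly $n_2$ edge orbits of length one outside $\binom F2$. All other edge orbits have length $\ge2$, covering $N_k-n_2$ edges. Therefore
\begin{align*}
\log\expect{e^{tZ^\sfE}}\le n_2\kappa_1^\sfE(t)+\frac{N_k-n_2}{2}\kappa_2^\sfE(t).
\end{align*}
Finally use $n_2\le k/2$ together with $\kappa_1^\sfE-\frac12\kappa_2^\sfE\ge0$ for $t>0$. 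This sign can be checked from the explicit formulas: $(1-a)^2\le 1-2a'+\dots$ since $\kappa_2\le 2\kappa_1$ by Cauchy–Schwarz applied to the MGF. It lets us replace $n_2$ by $k/2$, which yields exactly
\begin{align*}
\frac{N_k}{2}\kappa_2^\sfE(t)+\frac{k}{2}\Big(\kappa_1^\sfE(t)-\tfrac12\kappa_2^\sfE(t)+\tfrac12\kappa_2^\sfV(t)\Big).
\end{align*}
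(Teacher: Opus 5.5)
Your proposal is correct and follows the paper's proof. The shared steps are orbit-wise independence of the sums, the cycle bound $\kappa_\ell\le\frac{\ell}{2}\kappa_2$ for $\ell\ge2$ in both the edge and feature channels, and the count of at most $k/2$ length-one edge orbits outside $\binom{F}{2}$; you also make explicit the sign condition $\kappa_1^\sfE-\frac12\kappa_2^\sfE\ge0$, which the paper uses without comment. The step you flag as the main obstacle closes in one line, as in the paper: with $m_\ell=1/(\lambda_1^{\ell/2}-\lambda_2^{\ell/2})$, superadditivity of $x\mapsto x^{\ell/2}$ for $\ell\ge2$ gives $\lambda_1^{\ell/2}-\lambda_2^{\ell/2}\ge(\lambda_1-\lambda_2)^{\ell/2}$, hence $m_\ell\le m_2^{\ell/2}$, and this is exactly your ``convexity'' inequality after normalizing $\lambda_1\lambda_2=1$.
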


Recall that $\tau=(\rho\varphi(\rho)N_k+r\varphi(r)kd)-a_k$, where 
\begin{align}
    a_k=\begin{cases}
    3C\sqrt{(\rho^2N_k+r^2kd)2nh(\frac{k}n)},&k\le n-1\\
    3C\sqrt{(n\rho^2+dr^2)n\log n},&k=n
\end{cases}
\end{align}
and  $h(x)=-x\log x-(1-x)\log(1-x)$ is the binary entropy function. We then show that  $$\pth{1-\frac{\epsilon}{16}}(\rho\varphi(\rho)N_k+r\varphi(r)kd)\le \tau\le \rho\varphi(\rho)N_k+r\varphi(r)kd.$$ 
Recall that $\rho,r=o(1)$. 
For any $\delta n\le k\le n-1$, since $n\log\pth{\frac{1}{1-\rho^2}}+2d\log\pth{\frac{1}{1-r^2}}\ge (4+\epsilon)\log n$, we have $n\rho^2 +d r^2\ge (2+\epsilon/4)\log n$. 
Therefore, we obtain $\frac{n}{k}h\pth{\frac{k}{n}}\le \frac{h(\delta)}{\delta}\le \frac{\epsilon^2}{2^{15}C^2} (n\rho^2+dr^2)$ for sufficiently large $n$.
When $k\le n-1$, we have \begin{align*}  a_k&=3C\sqrt{(\rho^2N_k+r^2kd)2nh(\frac{k}n)}\le 4C\sqrt{(\rho^2N_k+r^2kd)\frac{\epsilon^2}{2^{14}C^2}(n\rho^2+dr^2)k}\\
    &\overset{\mathrm{(a)}}{\le} \frac{\epsilon}{32}\sqrt{(\rho^2N_k+r^2kd)4(\rho^2N_k+r^2kd)}= \frac{\epsilon}{16}(\rho^2N_k+r^2kd)\le \frac{\epsilon}{16}(\rho\varphi(\rho)N_k+r\varphi(r)kd),
\end{align*}
where $\mathrm{(a)}$ is because $nk\le 4N_k$. For $k=n$, since $n\log(\frac1{1-\rho^2})+2d\log(\frac1{1-r^2})\ge (4+\epsilon)\log n$ and $\rho^2,r^2=o(1)$, we conclude that $n\rho^2+dr^2\ge \frac{1}{2}(n\rho^2+2dr^2)\ge  2\log n$ holds for sufficiently large $n$. Therefore, 
\begin{align*}
    a_k&=3C\sqrt{(n\rho^2+dr^2)n\log n}\le 3C\sqrt{(n\rho^2+dr^2)\frac{(n\rho^2+dr^2)n}{2}}\\
    &\le \frac{4C}{\sqrt n}(\rho\varphi(\rho)n^2+r\varphi(r)nd)\le \frac{\epsilon}{16}(\rho\varphi(\rho)N_k+r\varphi(r)kd).
\end{align*}
Therefore, we conclude
 $$\pth{1-\frac{\epsilon}{16}}(\rho\varphi(\rho)N_k+r\varphi(r)kd)\le \tau\le \rho\varphi(\rho)N_k+r\varphi(r)kd.$$

We then upper bound $\prob{Z\ge \tau}$. We note that \begin{align*}
\prob{Z\ge \tau}&\le e^{-t\tau}\expect{e^{tZ}}\le \exp\pth{-t\tau+\frac{N_k}{2} \kappa_2^\sfE(t)+ \frac{k}{2}\pth{\kappa_1^\sfE(t)-\frac{1}{2}\kappa_2^\sfE(t)+\frac{1}{2}\kappa_2^\sfV(t)}}.
\end{align*}
Pick $t=1$. Since $\rho,r=o(1)$, we have $\rho^2,r^2\le \frac{\epsilon}{256}$. Recall that $\varphi(\rho) = \frac{\rho}{1-\rho^2}$. Then,
\begin{align*}
    \kappa_1^\sfE(t)-\frac12 \kappa_2^\sfE(t)&=\frac14\log\frac{1-2t^2\varphi^2(\rho)(1+\rho^2)+t^4\varphi^4(\rho)(1-\rho^2)^2}{(1-2t\rho\varphi(\rho)-t^2\varphi^2(\rho)(1-\rho^2))^2}
    \\
    &=\frac14\log\left(1+\frac{4t\rho^2}{1-\rho^2(1+t)^2}\right)\le \frac{\rho^2}{1-4\rho^2}\le 2,
\end{align*}
where the last inequality is because $\log(1+x)\le x$ and $\rho<\frac14$. We then bound $\kappa_2^\sfE(t)$. We note that 
\begin{align*}
    \frac{\kappa_2^\sfE(t)}{\rho\varphi(\rho)}&=-\frac{1-\rho^2}{2\rho^2}\log(1-2t^2\varphi^2(\rho)(1+\rho^2)+t^4\varphi^4(\rho)(1-\rho^2)^2)\\    &=-\frac{1-\rho^2}{2\rho^2}\log\left(1-\frac{\rho^2(2+\rho^2)}{(1-\rho^2)^2}\right)\overset{\mathrm{(a)}}{\le} 1+4\rho^2\le \pth{1+\frac{\epsilon}{64}},
\end{align*}
where the inequality $\mathrm{(a)}$ is from Lemma~\ref{lem:kappa2bound}. Hence, we have $\kappa_2^\sfE(t)\le \pth{1+\frac{\epsilon}{64}}\rho\varphi(\rho)$.
Similarly, we have 
\begin{align*}
 \kappa_2^\sfV(t)&= -\frac d2\log(1-2t^2\varphi^2(r)(1+r^2)+t^4\varphi^4(r)(1-r^2)^2)\le \pth{1+\frac{\epsilon}{64}}dr\varphi(r).
\end{align*}
Therefore, for $t=1$,
\begin{align*}
    &\quad -t\tau+\frac{N_k}{2}\kappa_2^\sfE(t)+\frac k2\pth{\kappa_1^\sfE(t)-\frac12\kappa_2^\sfE(t)}+\frac{k}{2} \kappa_2^\sfV(t) \\
    &\le -\pth{1-\frac\epsilon{16}}(\rho\varphi(\rho)N_k+r\varphi(r)kd)+\pth{\frac12+\frac{\epsilon}{128}}(\rho\varphi(\rho)N_k+kdr\varphi(r))+k\\
    &\le-\pth{\frac12-\frac\epsilon{32}}(\rho\varphi(\rho)N_k+r\varphi(r)kd)+k\\
    &\le -\pth{\frac12-\frac\epsilon{32}}\pth{2+\frac\epsilon4}k\log n+k,
\end{align*}
where the last inequality is because $N_k=\binom{n}{2}-\binom{n-k}{2}\ge \frac{1}{2}\pth{1-\frac{1}{n}}kn$ and
$$\rho\varphi(\rho)N_k+r\varphi(r)kd\ge k\pth{\frac{n-1}{2}\log(\frac1{1-\rho^2})+d\log(\frac1{1-r^2})}\ge k(2+\frac\epsilon4)\log n.$$
Consequently, by the union bound and $|\maT_k| = \binom{n}{k}k!\le n^k$,
\begin{align*}
    &~\prob{\exists\pi'\in \maT_k:{\score_{\pi'}(G_1,G_2)-\score_{\pi'}(G_1[F_{\pi'}],G_2[F_{\pi'}])}\ge\tau}\\
    \le&~n^k\prob{Z\ge \tau}\\\le&~\exp\pth{k\log n-\pth{\frac12-\frac\epsilon{32}}\pth{2+\frac\epsilon4}k\log n+k}\le \exp\pth{-\frac{\epsilon}{32}k\log n}.
\end{align*}
Combining this with~\eqref{eq:bad-event-signal-final}, we obtain that \begin{align*}
    \prob{\sfd(\hat{\pi},\pi^*) = k}&\le\prob{\exists\pi'\in \maT_k:{\score_{\pi^*}(G_1,G_2)-\score_{\pi^*}(G_1[F_{\pi'}],G_2[F_{\pi'}])}<\tau}\\&~~~~+\prob{\exists\pi'\in \maT_k:{\score_{\pi'}(G_1,G_2)-\score_{\pi'}(G_1[F_{\pi'}],G_2[F_{\pi'}])}\ge\tau}\\&\le \exp\pth{-nh\pth{\frac{k}{n}}}\indc{k\le n-1}+\exp\pth{-2\log n}\indc{k=n}+\exp\pth{-\frac{\epsilon k\log n}{32}}.
\end{align*}

\subsection{Proof of Proposition~\ref{prop:exact-possible}}\label{apd:proof-exact-possible}

For any $\pi'\in \maT_k$, let \begin{align*}
    Y_{\pi'} &\triangleq \varphi(\rho) \sum_{e\in E(G_1)\backslash\maC_1^\sfE} \pth{\beta_e(G_1)\beta_{\pi'(e)}(G_2)-\beta_e(G_1)\beta_{\pi^*(e)}(G_2)}\\
    &~~~~+\varphi(r)\sum_{v\in V(G_1)\backslash \maC_1^\sfV} \pth{\bm{x}_v^\top \bm{y}_{\pi'(v)}-\bm{x}_v^\top \bm{y}_{\pi^*(v)}},
\end{align*}
where $\maC_1^\sfE$ and $\maC_1^\sfV$ denote the edge orbit and vertex orbit of length 1 induced by $\sigma = (\pi^*)^{-1}\circ \pi'$. For notational simplicity, we write $Y=Y_{\pi'}$ when $\pi'$ is given. Then for any $t>0$, $\sth{\sfd(\hat{\pi},\pi^*) = k}\subseteq \sth{\exists \pi'\in \maT_k: Y\ge 0}$ and \begin{align}\label{eq:chernoff-smallk}
    \prob{\sfd(\hat{\pi},\pi^*) = k}\le \prob{\exists \pi'\in \maT_k: Y \ge 0}\overset{\mathrm{(a)}}{\le} |\maT_k|\prob{Y \ge 0}\overset{\mathrm{(b)}}{\le} n^k \expect{\exp\pth{tY }},
\end{align}
where $\mathrm{(a)}$ uses union bound and $\mathrm{(b)}$ follows from Chernoff's inequality and $|\maT_k| = \binom{n}{k}k!\le n^k$. Let \begin{align*}
    Y^\sfE&\triangleq\varphi(\rho) \sum_{e\in E(G_1)\backslash\maC_1^\sfE} \pth{\beta_e(G_1)\beta_{\pi'(e)}(G_2)-\beta_e(G_1)\beta_{\pi^*(e)}(G_2)},\\
    Y^\sfV&\triangleq\varphi(r)\sum_{v\in V(G_1)\backslash \maC_1^\sfV} \pth{\bm{x}_v^\top \bm{y}_{\pi'(v)}-\bm{x}_v^\top \bm{y}_{\pi^*(v)}}.
\end{align*}
Then $Y = Y^\sfE+Y^\sfV$ and $\expect{\exp(tY)} = \expect{\exp(tY^\sfE)}\expect{\exp\pth{tY^\sfV}}$. We then derive the upper bounds for $\expect{\exp\pth{tY^\sfV}}$ and $\expect{\exp\pth{tY^\sfE}}$, respectively. For any edge cycle $C=\sth{e_1,e_2,\cdots,e_{|C|}}$ with $e_{i+1} = \sigma^\sfE(e_i)$ for all $1\le i\le |C|-1$ and $e_1 = \sigma^\sfE(e_{|C|})$, we define the cumulant generating function as \begin{align*}
    \mu^\sfE_{|C|}(t) = \log\expect{\exp\pth{t\varphi(\rho)\sum_{i=1}^{|C|} \beta_{e_i}(G_1)\beta_{\pi'(e_i)}(G_2)-t\varphi(\rho)\sum_{i=1}^{|C|} \beta_{e_i}(G_1)\beta_{\pi^*(e_i)}(G_2)}},
\end{align*}
where we define $(u_{|C|+1},v_{|C|+1}) = (u_1,v_1)$. Similarly, for any node cycle $C=\sth{v_1,\cdots,v_{|C|}}$, we define $v_{|C|+1} = v_1$ and \begin{align*}
    \mu^\sfV_{|C|}(t) = \log\expect{\exp\pth{t\varphi(r)\sum_{i=1}^{|C|}\bm{x}_{v_i}\bm{y}_{\pi'(v_i)}-t\varphi(r)\sum_{i=1}^{|C|}\bm{x}_{v_i}\bm{y}_{\pi^*(v_i)}}}.
\end{align*}
The lower-order cumulants can be calculated directly:\begin{align*}
    \mu_2^\sfE(t)&=-\frac{1}{2}\log\pth{1+\frac{\rho^2}{1-\rho^2}(4t-4t^2)},\quad \mu_2^\sfV(t)=-\frac{d}{2}\log\pth{1+\frac{r^2}{1-r^2}(4t-4t^2)}.
\end{align*}
Recall that $N_k = \binom{n}{2}-\binom{n-k}{2}$. The following Lemma provides an upper bound on the cumulant function $\log\expect{\exp\pth{tY}}$.
\begin{lemma}\label{lem:cumulant-small}
    If $\sfd(\pi^*,\pi') = k$, for any $0<t<1$, we have \begin{align*}
        \log\expect{\exp\pth{tY}}\le \frac{1}{2}\pth{N_k-\frac{k}{2}}\mu_2^\sfE(t)+\frac{k}{2}\mu_2^\sfV(t).
    \end{align*}
\end{lemma}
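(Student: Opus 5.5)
We need to prove Lemma~\ref{lem:cumulant-small}: $\log\expect{\exp(tY)}\le \frac12(N_k-\frac k2)\mu_2^\sfE(t)+\frac k2\mu_2^\sfV(t)$ for $0<t<1$.

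Since $Y^\sfE$ and $Y^\sfV$ are independent, it suffices to prove the two bounds
\[
\log\expect{e^{tY^\sfE}}\le \tfrac12\pth{N_k-\tfrac k2}\mu_2^\sfE(t),
\qquad
\log\expect{e^{tY^\sfV}}\le \tfrac k2\mu_2^\sfV(t).
\]
We prove each one using the orbit decomposition of $\sigma=(\pi^*)^{-1}\circ\pi'$. After relabeling $G_2$ by $\pi^*$, we may take $\pi^*=\mathrm{id}$. Then the pairs $(\beta_e(G_1),\beta_e(G_2))$ are i.i.d.\ over edges, and similarly the feature pairs are i.i.d.\ over nodes.

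\textbf{Step 1: reduction to cycles.} Within each orbit $C=(e_1,\dots,e_\ell)$, the summand $\sum_i \beta_{e_i}(G_1)\pth{\beta_{e_{i+1}}(G_2)-\beta_{e_i}(G_2)}$ involves only variables indexed by $C$. Different orbits are therefore independent, and
\[
\log\expect{e^{tY^\sfE}}=\sum_{C\in\maC^\sfE\setminus\maC^\sfE_1}\mu^\sfE_{|C|}(t).
\]
The same holds for nodes: each node orbit contributes a $d$-fold product of independent scalar terms, because the feature coordinates are independent.

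\textbf{Step 2: cumulants of cycles.} For an $\ell$-cycle, write the exponent as the quadratic form $\frac t2 W^\top M W$, where $W=(a_1,\dots,a_\ell,b_1,\dots,b_\ell)$ has covariance $\Sigma=\begin{bmatrix}I&\rho I\\ \rho I& I\end{bmatrix}$. The form matrix $M$ is built from $\varphi(\rho)(P-I)$ with $P$ the cyclic shift. Then
\[
\mu_\ell^\sfE(t)=-\tfrac12\log\det\pth{I-t\Sigma^{1/2}M\Sigma^{1/2}}.
\]
Diagonalizing $P$ via the Fourier basis with eigenvalues $\omega^j$ reduces this to a product of $2\times2$ determinants. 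This gives
\[
\mu_\ell^\sfE(t)=-\tfrac12\sum_{j=0}^{\ell-1}\log\pth{1+\tfrac{\rho^2}{1-\rho^2}\,(2t-2t^2)\,|1-\omega^j|^2/2}\cdot(\text{normalization}).
\]
The $j=0$ factor is trivial, and the formula is consistent with the stated $\mu_2^\sfE$ at $\ell=2$, where $|1-\omega|^2=4$. The form to aim for is $\mu^\sfE_\ell(t)=-\frac12\sum_j\log\pth{1+c\,|1-\omega^j|^2}$ with $c=\frac{\rho^2(t-t^2)}{1-\rho^2}>0$ for $t\in(0,1)$; in particular every term is nonpositive. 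The key inequality is then $\mu^\sfE_\ell(t)\le \frac{\ell}{2}\mu_2^\sfE(t)$ for $\ell\ge2$. This does not follow from plain concavity, since concavity gives the inequality in the wrong direction. Instead we use the function $g(x)=\log(1+cx)$ on $[0,4]$: because $g$ is concave with $g(0)=0$, we have $g(x)\ge \frac x4 g(4)$. Combined with $\sum_j|1-\omega^j|^2=2\ell$, this yields $\sum_j g(|1-\omega^j|^2)\ge \frac{2\ell}{4}g(4)=\frac\ell2 g(4)$, i.e.\ $\mu_\ell\le\frac{\ell}{2}\mu_2$. The same argument applies coordinatewise to the node orbits and gives $\mu_\ell^\sfV\le\frac\ell2\mu_2^\sfV$.

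\textbf{Step 3: counting.} Summing over orbits gives $\log\expect{e^{tY^\sfE}}\le \frac12\mu_2^\sfE(t)\cdot\#\{\text{edges not fixed by }\sigma^\sfE\}$. The fixed points of $\sigma^\sfE$ are the edges inside $F$, which number $\binom{n-k}{2}$, together with the edges $\{u,v\}$ with $\sigma(u)=v,\sigma(v)=u$ (node 2-cycles), of which there are at most $k/2$. So at least $N_k-\frac k2$ edges are non-fixed. Since $\mu_2^\sfE\le0$, this gives the bound $\frac12(N_k-\frac k2)\mu_2^\sfE$. For nodes, exactly $k$ nodes are non-fixed, which gives $\frac k2\mu_2^\sfV$.

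\textbf{Main obstacle.} The hardest step is Step 2: the exact spectral computation of the cycle cumulant, including the correct normalization constant in front of $|1-\omega^j|^2$ (verified against the given $\mu_2$), and checking that the 2-cycle fixed edges, which give $\mu^\sfE=0$, are handled correctly.
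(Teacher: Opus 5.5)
Your proposal is correct. It departs from the paper only in how a single cycle's cumulant is computed and compared with the 2-cycle one. The reduction to orbits and the final count are the same as the paper's: at most $k/2$ fixed edges lie outside $\binom{F}{2}$, and then $\mu_2^\sfE(t)\le 0$ is used.

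\textbf{Paper's route.} The paper conditions on the $B$'s, which turns a cycle's exponent into $-\frac{c}{2}\sum_i (B_i-B_{i-1})^2$. It factors the resulting circulant form as $\mathbf{J}_k^\top\mathbf{J}_k$, giving $\exp(\mu_k^\sfE(t))=(\lambda_1^{k/2}-\lambda_2^{k/2})^{-1}$. It then compares with $k=2$ through $\lambda_1^{k/2}-\lambda_2^{k/2}\ge(\lambda_1-\lambda_2)^{k/2}$.

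\textbf{Your route.} You diagonalize in the Fourier basis to get $\mu_\ell^\sfE(t)=-\frac12\sum_j\log\pth{1+c|1-\omega^j|^2}$. You then use the chord bound $\log(1+cx)\ge \frac{x}{4}\log(1+4c)$ on $[0,4]$ together with $\sum_j|1-\omega^j|^2=2\ell$. The two closed forms agree, since $1+c|1-\omega^j|^2=|\lambda_1^{1/2}-\lambda_2^{1/2}\omega^j|^2$. Your comparison is the same eigenvalue-wise concavity device the paper uses in Lemma~\ref{lem:det_bound} for the $\sfd(\pi_1,\pi_2)=3$ case.

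\textbf{What each buys.} Your route makes the sign of every spectral factor, and the role of $t\in(0,1)$, transparent. It also avoids the root analysis $\lambda_1>\lambda_2>0$. The paper's route gives an explicit closed form, and finiteness of the MGF is automatic once $A$ is integrated out.

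\textbf{To finish Step 2.} Drop the ``(normalization)'' hedge, because there is no extra factor. With $M_j=\varphi(\rho)\begin{bmatrix}0&\omega^j-1\\ \bar\omega^j-1&0\end{bmatrix}$ and $\Sigma_2=\begin{bmatrix}1&\rho\\ \rho&1\end{bmatrix}$, the $j$-th mode gives
\[
\det(I_2-tM_j\Sigma_2)=1-2t\varphi(\rho)\rho\,\mathrm{Re}(\omega^j-1)-t^2\varphi(\rho)^2(1-\rho^2)|1-\omega^j|^2=1+c|1-\omega^j|^2,
\]
where $c=\frac{\rho^2(t-t^2)}{1-\rho^2}$; this uses $2\,\mathrm{Re}(\omega^j-1)=-|1-\omega^j|^2$. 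Also state why the Gaussian MGF formula applies for every $t\in(0,1)$. Each Hermitian $2\times 2$ block of $\Sigma^{-1}-tM$ has positive diagonal $\frac{1}{1-\rho^2}$ and determinant $\frac{1+c|1-\omega^j|^2}{1-\rho^2}>0$, so each block is positive definite.
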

Pick $t=\frac{1}{2}$. By Lemma~\ref{lem:cumulant-small}, \begin{align*}
    \log\expect{\exp\pth{tY}}\le -\frac{nk}{4}\pth{1-\frac{k+2}{2n}}\log\pth{\frac{1}{1-\rho^2}}-\frac{kd}{4}\log\pth{\frac{1}{1-r^2}}.
\end{align*}
Combining this with~\eqref{eq:chernoff-smallk}, we have \begin{align*}
    \prob{\sfd(\hat{\pi},\pi^*) = k}&\le n^k \expect{\exp\pth{tY}}\\
    &\le\exp\pth{k\log n-\frac{nk}{4}\pth{1-\frac{k+2}{2n}}\log\pth{\frac{1}{1-\rho^2}}-\frac{kd}{4}\log\pth{\frac{1}{1-r^2}}}\\&\le \exp\pth{k\log n-\frac{k}{4}\pth{1-\frac{k+2}{2n}}\pth{n\log\pth{\frac{1}{1-\rho^2}}+d\log\pth{\frac{1}{1-r^2}}}}\\&\overset{\mathrm{(a)}}{\le} \exp\pth{-\pth{\frac{\epsilon}{4}-{\frac{k+2}{2n}}\pth{1+\frac{\epsilon}{4}}}k\log n} \overset{\mathrm{(b)}}{\le} \exp\pth{-\frac{\epsilon}{8}k\log n},
\end{align*} 
where $\mathrm{(a)}$ is because $n\log\pth{\frac{1}{1-\rho^2}}+d\log\pth{\frac{1}{1-r^2}}\ge (4+\epsilon)\log n$;
$\mathrm{(b)}$ follows from $k\le \frac{\epsilon}{16}n$.

\subsection{Proof of Proposition~\ref{prop:impossible-partial}}\label{apd:proof-partial-impossible}

We first introduce the following lemma.
\begin{lemma}\label{lem:mutual-pack}
    For any $0<\delta\le 1$, we have \begin{align}\label{eq:pack-and-mutual}
        &|\maM_\delta|\ge \pth{\frac{\delta n}{e}}^{\delta n},\\
        &I(\pi^*;G_1,G_2)\le \frac{1}{2}\binom{n}{2}\log\Big(\frac{1}{1-\rho^2}\Big)+\frac{nd}{2}\log\Big(\frac{1}{1-r^2}\Big).
    \end{align}
\end{lemma}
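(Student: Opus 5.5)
The lemma has two independent parts: a packing bound and a mutual-information bound. I would prove them separately.

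\emph{Packing bound.} I would build $\maM_\delta$ greedily, or equivalently bound $|\maM_\delta|$ from below by a volume (Gilbert--Varshamov) argument. The key quantity is the size of a Hamming ball $B(\pi,m)=\{\pi':\sfd(\pi,\pi')\le m\}$ with $m=(1-\delta)n$. Choosing which $m$ points may disagree and then any arrangement of them gives
\[
|B(\pi,m)|\le \binom{n}{m}m!=\frac{n!}{(n-m)!}.
\]
A maximal packing $\maM_\delta$ with pairwise distance $>m$ covers $\maS_n$ by such balls. Hence
\[
|\maM_\delta|\ge \frac{n!}{n!/(\delta n)!}=(\delta n)!\ge\pth{\frac{\delta n}{e}}^{\delta n},
\]
using $k!\ge (k/e)^k$. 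For $\delta=1$ this is consistent with $\maM_1=\maS_n$, since $n!\ge (n/e)^n$. If $\delta n$ is not an integer I would replace it by $\lceil\delta n\rceil$, which is harmless.

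\emph{Mutual information bound.} I would use $I(\pi^*;G_1,G_2)\le \max_\pi D_{\mathrm{KL}}(\maP_{G_1,G_2\mid\pi}\Vert\maQ)$ for any $\maQ$. I would take $\maQ$ to be the product of the marginals: all edge weights and all features of $G_1$ and $G_2$ i.i.d.\ standard normal, with $G_1$ independent of $G_2$.

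Under $\maP_{\cdot\mid\pi}$ the observations split into independent pairs: $\binom n2$ edge pairs $(\beta_e(G_1),\beta_{\pi(e)}(G_2))$ and $n$ feature pairs $(\bm x_v,\bm y_{\pi(v)})$. Under $\maQ$ each such pair has the same marginals but independent components. The KL divergence therefore tensorizes, and each term equals the mutual information of a bivariate Gaussian. This gives $-\frac12\log(1-\rho^2)$ per edge and $-\frac d2\log(1-r^2)$ per node, since $\det\Sigma_d=(1-r^2)^d$. Summing yields the claimed bound, uniformly in $\pi$.

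\emph{Main obstacle.} The step most likely to need care is the packing bound. One must check that the ball-size count $n!/(n-m)!$ is correct, including the rounding of $\delta n$. The mutual information part is routine.
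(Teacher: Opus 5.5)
Your proposal is correct and matches the paper's proof essentially step for step. For the packing bound, the paper uses the same volume argument $|\maM_\delta|\ge n!/\max_\pi|B(\pi,(1-\delta)n)|$, with $|B(\pi,(1-\delta)n)|\le\binom{n}{\delta n}(n-\delta n)!$, which is the same count as your $n!/(n-m)!$. For the mutual information, it bounds $I(\pi^*;G_1,G_2)$ by $\max_\pi D_{\mathrm{KL}}(\maP_{G_1,G_2\mid\pi}\Vert\maQ_{G_1,G_2})$, with $\maQ$ the product of independent standard normals, and tensorizes this into $\binom n2$ edge terms and $n$ feature terms. Your handling of non-integer $\delta n$ via $\lceil\delta n\rceil$ is a minor detail the paper leaves implicit.
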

The proof of Lemma~\ref{lem:mutual-pack} is deferred to Appendix~\ref{apd:proof-mutual-pack}.
We directly apply Fano's inequality in~\eqref{eq:Fano}. For any $0<\delta<1$, by Lemma~\ref{lem:mutual-pack},
\begin{align*}
    \prob{\overlap(\hat{\pi},\pi^*)<\delta}&\ge 1-\frac{I(\pi^*;G_1,G_2)+\log 2}{\log |\maM_\delta|}\\&\ge 1-\frac{\binom n2 \frac12\log(\frac1{1-\rho^2})+\frac{nd}{2}\log(\frac1{1-r^2})+\log 2}{\delta n\log(\delta n/e)}\ge 1-\frac{c}{4\delta},
\end{align*}
where the last inequality follows from $n\log\pth{\frac{1}{1-\rho^2}}+2d\log\pth{\frac{1}{1-r^2}}\le c\log n$.

\subsection{Proof of Proposition~\ref{prop:impossible-exact}}\label{apd:proof-exact-impossible}
In this subsection, we provide the proof on Proposition~\ref{prop:impossible-exact}.
Recall that $$\score_{\pi}(G_1,G_2) = \varphi(\rho)\sum_{e\in E(G_1)} \beta_e(G_1)\beta_{\pi(e)}(G_2)+\varphi(r)\sum_{v\in V(G_1)}\bm{x}_v\bm{y}_{\pi(v)}.$$ Define \begin{align*}
    \maE(\pi^*,\pi') &\triangleq \sth{(G_1,G_2):\score_{\pi*}(G_1,G_2)\le \score_{\pi'}(G_1,G_2)},\\ \maM &\triangleq\sth{\pi'\in \maS_n:\pi'\neq \pi^*,(G_1,G_2)\in \maE(\pi^*,\pi')}.
\end{align*}
Since the true permutation $\pi^*$ is uniformly distributed, the MLE $\hat\pi_{\mathrm{ML}}$ minimizes the error probability among all estimators. Therefore, to prove the impossibility result, it suffices to prove the failure of MLE. We note that $\hat{\pi}$ in~\eqref{eq:estimator} achieves exact recovery is equivalent to $\mathcal M=\emptyset$. To prove the impossibility of exact recovery, it suffices to show $\prob{|\mathcal M|=0}=o(1)$.

Let $I = |\maM\cap \maT_2|$ with $\maT_2 = \sth{\pi'\in \maS_n:\sfd(\pi^*,\pi') = 2}$. Then $I\le |\maM|$. By Chebyshev's inequality, we have
\begin{align}\label{eq:prob-M-0}
    \prob{|\maM| = 0}\le \prob{I=0}\le \prob{(I-\expect{I})^2\ge (\expect{I})^2}\le \frac{\var\qth{I}}{(\expect{I})^2}.
\end{align}
Given $\pi'$, let $\epsilon_1\triangleq \prob{(G_1,G_2)\in \maE(\pi^*,\pi')}$. Since $|\maT_2| = \binom{n}{2}$, the expectation $\expect{I}$ is then given by \begin{align*}
    \expect{I} = \sum_{\pi'\in \maT_2}\prob{(G_1,G_2)\in \maE(\pi^*,\pi')} = \binom{n}{2}\epsilon_1.
\end{align*}

We then compute the second moment $\expect{I^2}$. Note that \begin{align}
    \nonumber I^2 &= \pth{\sum_{\pi'\in\maT_2} \indc{(G_1,G_2)\in \maE(\pi^*,\pi')}}^2 \\\label{eq:I-second-moment}&= \sum_{\pi'\in \maT_2} \indc{(G_1,G_2)\in \maE(\pi^*,\pi')}+\sum_{\pi_1,\pi_2\in \maT_2:\pi_1\neq \pi_2}\indc{(G_1,G_2)\in \maE(\pi^*,\pi_1)}\indc{(G_1,G_2)\in \maE(\pi^*,\pi_2)}.
\end{align}
It remains to compute $\sum_{\pi_1\neq \pi_2\in \maT_2}\prob{(G_1,G_2)\in \maE(\pi^*,\pi_1)\cap  \maE(\pi^*,\pi_2)}$. Indeed, we have $\sfd(\pi_1,\pi_2) \in\sth{3,4}$ for any $\pi_1\neq \pi_2 \in \maT_2$. The number of pairs $(\pi_1,\pi_2)$ with $\pi_1\neq \pi_2\in \maT_2$ with $\sfd(\pi_1,\pi_2) = 3$ and $\sfd(\pi_1,\pi_2) = 4$ are $6\binom{n}{3}$ and $6\binom{n}{4}$, respectively. 
For $\pi_1\neq \pi_2\in \maT_2$ with $\sfd(\pi_1,\pi_2) = 4$, since $\score_{\pi^*}(G_1,G_2)-\score_{\pi_1}(G_1,G_2)$ and $\score_{\pi^*}(G_1,G_2)-\score_{\pi_2}(G_1,G_2)$ are independent, we have \begin{align*}
    \prob{(G_1,G_2)\in \maE(\pi^*,\pi_1)\cap  \maE(\pi^*,\pi_2)} = \prob{(G_1,G_2)\in \maE(\pi^*,\pi_1)}\prob{(G_1,G_2)\in \maE(\pi^*,\pi_2)} = \epsilon_1^2.
\end{align*}

For $\pi_1\neq \pi_2\in \maT_2$ with $\sfd(\pi_1,\pi_2) = 3$, we have the following Lemma.
\begin{lemma}\label{lem:d=3}
    For any $\pi_1\neq \pi_2\in \maT_2$ with $\sfd(\pi_1,\pi_2) = 3$, we have \begin{align*}
        \prob{(G_1,G_2)\in \maE(\pi^*,\pi_1)\cap \maE(\pi^*,\pi_2)}\le (1-\rho^2)^{\frac{3(n-2)}{4}}(1-r^2)^{\frac{3d}{4}}.
    \end{align*}
\end{lemma}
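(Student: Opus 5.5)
Two distinct transpositions $\pi_1,\pi_2\in\maT_2$ with $\sfd(\pi_1,\pi_2)=3$ must share exactly one vertex. So write $\sigma_1=(\pi^*)^{-1}\circ\pi_1=(a\,b)$ and $\sigma_2=(\pi^*)^{-1}\circ\pi_2=(a\,c)$ for distinct $a,b,c$. The two failure events are $\{Y_1\ge 0\}$ and $\{Y_2\ge 0\}$, where $Y_i=\score_{\pi_i}(G_1,G_2)-\score_{\pi^*}(G_1,G_2)$, exactly as in the proof of Proposition~\ref{prop:exact-possible}. The plan is a joint Chernoff bound with equal weights:
\[
\prob{Y_1\ge0,\,Y_2\ge0}\le\prob{Y_1+Y_2\ge0}\le \inf_{t>0}\expect{\exp\pth{t(Y_1+Y_2)}}.
\]
We will take $t=1/2$, mirroring the choice in Proposition~\ref{prop:exact-possible}. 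Since $Y_1+Y_2$ splits into independent edge and node parts, the MGF factorizes, and we bound each part separately.

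\emph{Node part.} Only $\bm{x}_a,\bm{x}_b,\bm{x}_c$ and their partners are involved, and the coordinates are i.i.d. So the node MGF is the $d$-th power of a one-dimensional Gaussian quadratic-form MGF in $6$ correlated standard normals. We can compute it exactly via $\det(I-2tM\Sigma)^{-1/2}$, with $M$ the symmetric matrix of the form and $\Sigma$ the block covariance. The target is $(1-r^2)^{3d/4}$, i.e. each coordinate contributes $(1-r^2)^{3/4}$. Heuristically, each transposition alone gives $(1-r^2)^{1/2}$ (as $\mu_2^\sfV(1/2)$ does for a $2$-cycle), and the shared vertex $a$ costs a factor $(1-r^2)^{-1/4}$. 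I would check this by evaluating the $6\times6$ determinant at $t=1/2$. If the equal-weight sum is not tight enough, I would optimize over a pair $(t_1,t_2)$ instead.

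\emph{Edge part.} Edges are affected by $\sigma_1$ or $\sigma_2$ only if they touch $\{a,b,c\}$. Group them by outside vertex $w\notin\{a,b,c\}$: each $w$ gives the three edges $(w,a),(w,b),(w,c)$. This configuration is, after relabeling, identical to a single feature coordinate, so it contributes the same factor $(1-\rho^2)^{3/4}$. There are $n-3$ such $w$, and the remaining edges $(a,b),(a,c),(b,c)$ form one more small triangle block. Bounding that block's MGF factor by $(1-\rho^2)^{3/4}$ as well, or at least by $1$ and absorbing the slack, gives a total of $(1-\rho^2)^{3(n-2)/4}$. These groups are mutually independent because they use disjoint edge variables.

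\emph{Main obstacle.} The hard step is the explicit $6\times 6$ determinant for the three-vertex block and confirming the exponent $3/4$ exactly. The same computation also has to handle the internal triangle edges, whose mapping under $\sigma_1,\sigma_2$ is slightly different. I would first try to simplify by writing $\bm{y}_{\pi^*(v)}=r\bm{x}_v+\sqrt{1-r^2}\,\bm{z}_v$ and integrating out $\bm{z}$ first. This reduces the problem to a $3\times3$ determinant in the $\bm{x}$ variables, which should factor cleanly at $t=1/2$.
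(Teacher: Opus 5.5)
Your proposal is correct and follows essentially the paper's argument: the paper also bounds $\indc{Y_1\ge 0}\indc{Y_2\ge 0}$ by $e^{(Y_1+Y_2)/2}$ and evaluates the resulting Gaussian determinant. Your local blocks (each outside vertex $w$, the triangle on $\{a,b,c\}$, and each feature coordinate) are exactly the nontrivial $3$-cycles in the orbit decomposition the paper diagonalizes, and each contributes exactly $\frac{1-\rho^2}{1-\rho^2/4}\le(1-\rho^2)^{3/4}$ (resp.\ with $r$). The triangle block is isomorphic to the others, with $(b,c)$ as the shared edge moved by both transpositions, so it also gives the full factor $(1-\rho^2)^{3/4}$; your fallback of bounding it by $1$ would only give the exponent $3(n-3)/4$, not the stated $3(n-2)/4$.
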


Next we prove a lower bound of $\epsilon_1$. For any $\pi'\in \maT_2$, we assume that $\pi^*(v)=\pi'(v)$ for any $v\in V(G_1)\backslash \sth{v_1,v_2}$, $\pi^*(v_1)=\pi'(v_2)$, and $\pi^*(v_2) = \pi'(v_1)$.
Consequently,\begin{align*}
    \epsilon_1 &= \prob{(G_1,G_2)\in \maE(\pi^*,\pi')}\\
    &=\mathbb{P}\Bigg[\varphi(\rho)\sum_{e\in E(G_1)} \beta_e(G_1)\pth{\beta_{\pi'(e)}(G_2)-\beta_{\pi^*(e)}(G_2)}\\&~~~~~~~+\varphi(r)\sum_{v\in V(G_1)}\bm{x}_v^\top \pth{\bm{y}_{\pi'(v)}-\bm{y}_{\pi^*(v)}}\ge 0\Bigg]\\&=\mathbb{P}\Bigg[\varphi(\rho)\sum_{v\in V(G_1)\backslash\sth{v_1,v_2}}(\beta_{vv_1}(G_1)-\beta_{vv_2}(G_1))(\beta_{\pi^*(vv_1)}(G_2)-\beta_{\pi^*(vv_2)}(G_2))\\&~~~~~~~+ \varphi(r) (\bm{x}_{v_1}-\bm{x}_{v_2})^\top (\bm{y}_{\pi^*(v_1)}-\bm{y}_{\pi^*(v_2)})\le 0   \Bigg]\\&\ge \prob{\sum_{v\in V(G_1)\backslash\sth{v_1,v_2}}(\beta_{vv_1}(G_1)-\beta_{vv_2}(G_1))(\beta_{\pi^*(vv_1)}(G_2)-\beta_{\pi^*(vv_2)}(G_2))\le 0}\\&~~~~\cdot \prob{(\bm{x}_{v_1}-\bm{x}_{v_2})^\top (\bm{y}_{\pi^*(v_1)}-\bm{y}_{\pi^*(v_2)})\le 0}.
\end{align*}

We then bound the probability of two events separately.
We note that \begin{align*}
    \begin{bmatrix}
        X_v\\Y_v
    \end{bmatrix}\triangleq\begin{bmatrix} \beta_{vv_1}(G_1)-\beta_{vv_2}(G_1) \\ \beta_{\pi^*(vv_1)}(G_2)-\beta_{\pi^*(vv_2)}(G_2) \end{bmatrix} \sim \mathcal{N}\left( \begin{bmatrix} 0 \\ 0 \end{bmatrix}, 2 \begin{bmatrix} 1 & \rho \\ \rho & 1 \end{bmatrix} \right).
\end{align*}

Let $\xi_v\overset{\mathrm{i.i.d.}}{\sim} \maN(0,1)$ for any $v\in V(G_1)\backslash\sth{v_1,v_2}$. We note that \begin{align*}
    &~\prob{\sum_{v\in V(G_1)\backslash\sth{v_1,v_2}}X_v Y_v\ge 0}\\=&~\expect{\prob{\sum_{v\in V(G_1)\backslash\sth{v_1,v_2}}X_v Y_v\ge 0\Big| \sth{v\in V(G_1)\backslash\sth{v_1,v_2}}}}\\\overset{\mathrm{(a)}}{=}&~\expect{\prob{\sum_{v\in V(G_1)\backslash\sth{v_1,v_2}}\rho X_v^2+\sqrt{2(1-\rho^2)}X_v\xi_v\le 0\Big| \sth{v\in V(G_1)\backslash\sth{v_1,v_2}}}}\\\overset{\mathrm{(b)}}{=}&~\expect{\prob{\maN(0,1)\ge \frac{\rho\sqrt{\sum_{v\in V(G_1)\backslash\sth{v_1,v_2}}X_v^2}}{\sqrt{2(1-\rho^2)}}\Big| \sth{v\in V(G_1)\backslash\sth{v_1,v_2}}}},
\end{align*}
where $\mathrm{(a)}$ is because $Y_v|X_v\sim \maN(\rho X_v,2(1-\rho^2))$ and $\sth{Y_v|X_v,v\in V(G_1)\backslash\sth{v_1,v_2}}$ are independent; $\mathrm{(b)}$ is because $$\sum_{v\in V(G_1)\backslash\sth{v_1,v_2}} X_v\xi_v\Bigg|\sth{X_v:v\in V(G_1)\backslash\sth{v_1,v_2}}\sim \maN\pth{0,2(1-\rho^2)\sum_{v\in V(G_1)\backslash\sth{v_1,v_2}} X_v^2}.$$ By Lemma~\ref{lem:chisquare}, since $\sum_{v\in V(G_1)\backslash\sth{v_1,v_2}}\frac{1}{2} X_v^2\sim \chi^2(n-2)$, we have \begin{align*}
    \prob{\sum_{v\in V(G_1)\backslash\sth{v_1,v_2}} X_v^2\le 2(n+\sqrt{n\log n})}=1-o(1).
\end{align*}
Consequently, \begin{align*}
    &~\prob{\sum_{v\in V(G_1)\backslash\sth{v_1,v_2}}X_v Y_v\ge 0}\\\ge&~\expect{(1-o(1))\prob{\maN(0,1)\ge \frac{\rho\sqrt{2(n+\sqrt{n\log n})}}{\sqrt{2(1-\rho^2)}}}}\\\overset{\mathrm{(a)}}{\ge}&~\frac{1-o(1)}{\sqrt{2\pi}}\exp\pth{-\frac{1}{2}\frac{\rho^2(n+\sqrt{n\log n})}{1-\rho^2}}\frac{2}{\frac{\rho\sqrt{n+\sqrt{n\log n}}}{\sqrt{1-\rho^2}}+\sqrt{4+\frac{\rho^2(n+\sqrt{n\log n})}{1-\rho^2}}}\\\overset{\mathrm{(b)}}{\ge}&~\frac{1}{16\sqrt{\log n}}(1-\rho^2)^{-\frac{1}{2}(n-2)(1+o(1))},
\end{align*}
where $\mathrm{(a)}$ is because $\prob{Z\ge t}\ge \frac{2}{t+\sqrt{t^2+4}}\frac{1}{\sqrt{2\pi}}\exp\pth{-\frac{1}{2}t^2}$ for $Z\sim \maN(0,1)$ ~\cite{Birnbaum1942inequality}; 
$\mathrm{(b)}$ is because $n\log\pth{\frac{1}{1-\rho^2}}\le (4-\epsilon)\log n$ implies $\frac{\rho^2}{1-\rho^2}(n+\sqrt{n\log n})\le 4\log n$, $\frac{1-o(1)}{\sqrt{2\pi}}\cdot\frac{2}{\sqrt{4\log n}+\sqrt{4+4\log n}}\ge \frac{1}{16\sqrt{\log n}}$, and $\frac{\rho^2}{1-\rho^2} = (1+o(1))\log\pth{\frac{1}{1-\rho^2}}$. 
It follows from~\cite[Proposition 4.3]{kunisky2022strong} that when $r^2 \ge \frac{40}{d}$, \begin{align*}
    \prob{(\bm{x}_{v_1}-\bm{x}_{v_2})^\top (\bm{y}_{\pi^*(v_1)}-\bm{y}_{\pi^*(v_2)})\le 0}\ge \frac{1}{1000\sqrt{d}}(1-r^2)^{\frac{d}{2}}.
\end{align*}
Consequently,\begin{align*}
    \epsilon_1\ge \frac{1}{16000\sqrt{d\log n
    }}(1-\rho^2)^{\frac{n-2}{2}(1+o(1))}(1-r^2)^{\frac{d}{2}}.
\end{align*}

By Lemma~\ref{lem:d=3}, for any $\pi_1\neq \pi_2\in \maT_2$ with $\sfd(\pi_1,\pi_2) = 3$, we have \begin{align*}
    \epsilon_2\triangleq\prob{(G_1,G_2)\in \maE(\pi^*,\pi_1)\cap \maE(\pi^*,\pi_2)}\le (1-\rho^2)^{\frac{3(n-2)}{4}}(1-r^2)^{\frac{3d}{4}}.
\end{align*}
By~\eqref{eq:prob-M-0} and~\eqref{eq:I-second-moment},\begin{align*}
    \prob{|\maM| = 0}&\le \frac{\expect{I^2}-(\expect{I})^2}{(\expect{I})^2}=\frac{\binom{n}{2}\epsilon_1+{6\binom{n}{3}\epsilon_2+6\binom{n}{4}\epsilon_1^2}-\binom{n}{2}^2\epsilon_1^2}{\binom{n}{2}^2 \epsilon_1^2}\le \frac{4}{n^2 \epsilon_1}+\frac{4\epsilon_2}{n\epsilon_1^2}.
\end{align*}
Since $n\log\pth{\frac{1}{1-\rho^2}}+d\log\pth{\frac{1}{1-r^2}}+4\log d\le (4-\epsilon)\log n$ , we obtain \begin{align*}
    n^2 \epsilon_1 &\ge \frac{1}{16000\sqrt{\log n}}\\&~~~~\cdot\exp\pth{2\log n-\frac{1}{2}\log d-\frac{n-2}{2}(1+o(1))\log\pth{\frac{1}{1-\rho^2}}-\frac{d}{2}\log\pth{\frac{1}{1-r^2}}}\\&\ge \frac{1}{16000\sqrt{\log n}}\exp\pth{\frac{\epsilon}{4}\log n}
\end{align*}
and \begin{align*}
    \frac{n\epsilon_1^2}{\epsilon_2}&\ge \frac{1}{256\cdot 10^6\log n}\\&~~~~\cdot\exp\pth{\log n-\log d-\frac{n-2}{4}(1+o(1))\log\pth{\frac{1}{1-\rho^2}}-\frac{d}{4}\log\pth{\frac{1}{1-r^2}}}\\&\ge\frac{1}{256\cdot 10^6\log n}\exp\pth{\frac{\epsilon}{8}\log n}. 
\end{align*}
Therefore, we obtain $\prob{|\maM| = 0}\le \frac{4}{n^2\epsilon_1}+\frac{4\epsilon_2}{n\epsilon_1^2}=o(1)$, we finish the proof.

\subsection{Proof of Proposition~\ref{prop:convergenceguarantee}} 
Recall $f$: 
$$f(\Pi)\triangleq\lambda \Vert A_1\Pi-\Pi A_2 \Vert_F^2+(1-\lambda) \sum_{i=1}^d \Vert B_1^i\Pi-\Pi B_2^i\Vert_F^2.$$
To prove Proposition~\ref{prop:convergenceguarantee}, we need the following proposition to establish the convergence of function $f$.
\begin{proposition}\label{prop:gradient}
    For any two graphs $G_1,G_2$, there exists a universal constant $L = L(G_1,G_2)$ such that for any $\eta\le L^{-1}$, we have \begin{align*}
        |f(\Pi^K) - f(\Pi')|\le \frac{1}{2\eta K} \Vert \Pi^0-\Pi'\Vert_F^2\le \frac{n}{\eta K}
    \end{align*}
    for any integer $K\ge 1$, where $\Pi'\in \arg\min_{\Pi\in \mathbb{W}^n} f(\Pi)$ and $\Pi^0$ is the initial state.
\end{proposition}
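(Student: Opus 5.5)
The plan is to recognize Proposition~\ref{prop:gradient} as the standard $O(1/K)$ guarantee for projected gradient descent on a convex, $L$-smooth function over a closed convex set, and to check the two hypotheses for $f$ on $\mathbb W^n$. For convexity: $\Pi\mapsto A_1\Pi-\Pi A_2$ and $\Pi\mapsto B_1^i\Pi-\Pi B_2^i$ are linear maps $\mathcal L_0,\mathcal L_i$ on $\mathbb{R}^{n\times n}$. Hence $f(\Pi)=\lambda\|\mathcal L_0\Pi\|_F^2+(1-\lambda)\sum_i\|\mathcal L_i\Pi\|_F^2$ is a positive semidefinite quadratic form, and therefore convex. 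For smoothness: $\nabla f(\Pi)=2\lambda\mathcal L_0^*\mathcal L_0\Pi+2(1-\lambda)\sum_i\mathcal L_i^*\mathcal L_i\Pi$ is linear, so $\nabla f$ is $L$-Lipschitz with
\[
L=2\lambda\|\mathcal L_0\|_{op}^2+2(1-\lambda)\sum_i\|\mathcal L_i\|_{op}^2 .
\]
Using $\|A_1\Pi-\Pi A_2\|_F\le(\|A_1\|_2+\|A_2\|_2)\|\Pi\|_F$ and the analogous bound for the diagonal $B$'s, one can take
\[
L=2\lambda(\|A_1\|_2+\|A_2\|_2)^2+2(1-\lambda)\sum_{i=1}^d(\|B_1^i\|_2+\|B_2^i\|_2)^2 ,
\]
which depends only on $(G_1,G_2)$. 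Finally, $\mathbb W^n$ is compact and convex, so a minimizer $\Pi'$ exists and the Euclidean projection is well defined and nonexpansive.

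Next I would run the textbook argument. From $L$-smoothness and $\eta\le 1/L$, together with the projection characterization $\langle \Pi^k-\eta\nabla f(\Pi^k)-\Pi^{k+1},\,Z-\Pi^{k+1}\rangle\le 0$ for all $Z\in\mathbb W^n$, one obtains the descent inequality for the gradient mapping, $f(\Pi^{k+1})\le f(\Pi^k)$. Combined with convexity, it gives the three-point bound
\[
f(\Pi^{k+1})-f(\Pi')\le \frac{1}{2\eta}\bigl(\|\Pi^k-\Pi'\|_F^2-\|\Pi^{k+1}-\Pi'\|_F^2\bigr).
\]
Summing over $k=0,\dots,K-1$ telescopes the right side. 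Monotonicity of $f(\Pi^k)$ then lets us replace the average by the last iterate, giving $f(\Pi^K)-f(\Pi')\le \|\Pi^0-\Pi'\|_F^2/(2\eta K)$. The absolute value in the statement is harmless, since $f(\Pi^K)\ge f(\Pi')$ because $\Pi^K\in\mathbb W^n$.

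For the last inequality I bound the diameter of the Birkhoff polytope. For $\Pi,\Pi'\in\mathbb W^n$, the entries lie in $[0,1]$ and each row sums to $1$, so $\|\Pi\|_F^2\le\sum_{ij}\Pi_{ij}=n$. Hence
\[
\|\Pi^0-\Pi'\|_F^2\le 2\|\Pi^0\|_F^2+2\|\Pi'\|_F^2\le 4n ,
\]
which only gives $2n/(\eta K)$. To get the stated constant $n$, I would instead expand $\|\Pi^0-\Pi'\|_F^2=\|\Pi^0\|_F^2+\|\Pi'\|_F^2-2\langle\Pi^0,\Pi'\rangle\le 2n$, using that the inner product is nonnegative for entrywise nonnegative matrices. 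Dividing by $2\eta K$ gives exactly $n/(\eta K)$.

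The only real subtlety is the smoothness constant, namely making sure $L$ is finite and data-dependent rather than universal, since the proposition's wording permits $L=L(G_1,G_2)$; everything else is routine. I expect no genuine obstacle. The place requiring care is the last-iterate, as opposed to averaged-iterate, bound, which needs the sufficient-decrease step $f(\Pi^{k+1})\le f(\Pi^k)-\tfrac{1}{2\eta}\|\Pi^{k+1}-\Pi^k\|_F^2$ under $\eta\le 1/L$.
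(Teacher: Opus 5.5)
Your proposal is correct and follows essentially the paper's proof: $\nabla f$ is Lipschitz with the operator-norm constant, you use the projection optimality inequality with $Z=\Pi'$ and with $Z=\Pi^k$, convexity gives the three-point bound, and telescoping together with monotone decrease yields the last-iterate rate. The only difference is the diameter bound $\|\Pi^0-\Pi'\|_F^2\le 2n$, where your elementary argument ($\Pi_{ij}^2\le\Pi_{ij}$ and $\langle\Pi^0,\Pi'\rangle\ge 0$) replaces the paper's Birkhoff--von Neumann extreme-point argument and is equally valid.
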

The proof of Proposition~\ref{prop:convergenceguarantee} is deferred to Appendix~\ref{apd:proof-prop-gradient}.
Without loss of generality, in the following analysis, we suppose $\pi^*=\mathrm{id}$ for simplicity. To obtain the convergence guarantee of $\|\Pi^K-\Pi'\|$, we need the following lemma:
\begin{lemma}\label{lem:distanceunionbound}
    For the distance matrix $D\in \mathbb R^{n\times n}$ defined as $D_{ij}=\|\bm x_i-\bm y_j\|_2^2$, for any $0<\delta<1$, if $d>32\log\frac{n}{\sqrt\delta}$, then with probability at least $1-\delta$, $\min_{i,j}D_{i,j}\ge (1-r)d$.
\end{lemma}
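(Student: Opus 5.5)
Since $\pi^*=\mathrm{id}$, we have $\bm{x}_i,\bm{y}_j$ for $i\neq j$ independent $\maN(0,I_d)$ vectors, while $(\bm{x}_i,\bm{y}_i)$ are correlated with correlation $r$. The plan is to lower bound each entry $D_{ij}$ separately with failure probability at most $\delta/n^2$, and then take a union bound over the $n^2$ pairs $(i,j)$.

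\textbf{Step 1: reduce each entry to a chi-square.} For each pair, $\bm{x}_i-\bm{y}_j\sim\maN(0,s I_d)$ with
\begin{align*}
s = 2(1-r)\ \text{ if } i=j, \qquad s = 2\ \text{ if } i\neq j.
\end{align*}
In both cases $s\ge 2(1-r)$, so
\begin{align*}
D_{ij}=\Vert \bm{x}_i-\bm{y}_j\Vert^2 \overset{d}{=} s\,\chi^2_d \quad\text{(stochastically dominates } 2(1-r)\chi^2_d).
\end{align*}
Hence it suffices to show $\prob{\chi^2_d\le d/2}\le \delta/n^2$. Then $D_{ij}\ge 2(1-r)\cdot d/2=(1-r)d$ on the complement.

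\textbf{Step 2: lower-tail bound.} Apply the Laurent--Massart lower tail bound
\begin{align*}
\prob{\chi^2_d\le d-2\sqrt{dx}}\le e^{-x}.
\end{align*}
Take $x=d/16$, so that $d-2\sqrt{dx}=d/2$. This gives
\begin{align*}
\prob{\chi^2_d\le d/2}\le e^{-d/16}.
\end{align*}
Alternatively, a direct Chernoff computation with the MGF $(1+2\theta)^{-d/2}$ works, and would give the sharper exponent $\tfrac{d}{2}\bigl(\tfrac12-\log 2\bigr)$ in the bound $\exp(\cdot)$, roughly $e^{-0.096d}$.

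\textbf{Step 3: union bound.} The condition $d>32\log\frac{n}{\sqrt\delta}=16\log\frac{n^2}{\delta}$ gives
\begin{align*}
e^{-d/16}<\frac{\delta}{n^2}.
\end{align*}
Summing over all $n^2$ entries yields total failure probability at most $\delta$, which proves the lemma.

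The only delicate point is matching the constant $32$, which is where I expect the main obstacle. The Laurent--Massart bound with $x=d/16$ matches it exactly, so no slack is needed. The diagonal case must use variance $2(1-r)$ rather than $2$, and that is exactly where the factor $(1-r)$ in the statement comes from.
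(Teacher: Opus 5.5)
Your proposal is correct and follows the paper's proof essentially step for step. You reduce each $D_{ij}$ to a scaled $\chi^2_d$ (scale $2(1-r)$ on the diagonal, $2$ off it), bound $\prob{\chi^2_d<d/2}\le e^{-d/16}$, and union bound over the $n^2$ entries; the only cosmetic difference is that you invoke Laurent--Massart with $x=d/16$, whereas the paper uses its Lemma~\ref{lem:chisquare} with deviation $1/2$, which yields the same $e^{-d/16}$.
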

\begin{proof}[Proof of Lemma~\ref{lem:distanceunionbound}]
    Since $\pi^*=\mathrm{id}$, for correlated pair $\bm x_i, \bm y_i$, we have $\bm x_i-\bm y_i\sim \calN(\bm 0, 2(1-r)I_{d})$, which implies $D_{ii}=\|\bm x_i-\bm y_i\|^2\sim 2(1-r)\chi^2_{d}$. For independent pair $\bm x_k, \bm y_j$, $k\ne j$, $\bm x_k-\bm y_j\sim \calN(\bm 0, 2I_d)$, $D_{kj}=\|\bm x_k-\bm y_j\|^2\sim 2\chi^2_d$.
    
    By Lemma~\ref{lem:chisquare}, we have
    $$\prob{D_{ii}<(1-r)d}\le \exp\pth{-\frac1{16}d},\quad \prob{D_{kj}<d}\le \exp\pth{-\frac1{16}d}.$$
    Taking union bound, we obtain
    $$\prob{\min_{i,j}D_{ij}<(1-r)d}\le n^2\exp\pth{-\frac1{16}d}\le \delta.$$
\end{proof}

    The Hessian of $f$ has matrix expression
    \begin{align*}
\nabla^2 f&=2\lambda(I\otimes A_1-A_2^\top\otimes I)^\top(I\otimes A_1-A_2^\top\otimes I)+2(1-\lambda)\operatorname{diag}(\{D_{ij}\}_{1\le i,j\le n})\\
&\succeq 2(1-\lambda)\min_{1\le i,j\le n}(D_{ij})I,
    \end{align*}
    where $A\succeq B$ means that for symmetric matrices $A$ and $B$, $A-B$ is positive semidefinite.
    Denote $m\triangleq 2(1-\lambda)\min_{i,j}(D_{ij})$. Since $\Pi'$ minimizes $f$ on $\mathbb W_n$, it satisfies $\langle\nabla f(\Pi'),\Pi-\Pi'\rangle\ge 0$. Therefore,  for any $\Pi\in\mathbb W_n$, $f(\Pi)\ge f(\Pi')+\langle\nabla f(\Pi'),\,\Pi-\Pi'\rangle+\frac{m}{2}\|\Pi-\Pi'\|_F^2\ge f(\Pi')+\frac m2\|\Pi-\Pi'\|_F^2.$
    Following Lemma~\ref{lem:distanceunionbound}, for any $0<\delta<1$, if $d>32\log\frac{n}{\sqrt\delta}$, then with probability at least $1-\delta$, $\min_{i,j}D_{i,j}\ge (1-r)d$, which implies
    $$\|\Pi^K-\Pi'\|_F^2\le \frac{2}{m}|f(\Pi^K)-f(\Pi')|\le \frac{n}{(1-\lambda)(1-r)d\eta K},$$
    where the second inequality follows from Proposition~\ref{prop:gradient}.

\subsection{Proof of Proposition~\ref{prop:possible-all-lambda}}\label{apd:proof-prop-all-lambda}
We consider the following estimator:\begin{align*}
    \hat \pi_\lambda=\mathop{\text{argmax}}_{\pi\in \maS_n}\left\{\lambda\sum_{e\in E(G_1)}\beta_e(G_1)\beta_{\pi(e)}(G_2)+(1-\lambda)\sum_{v\in V(G_1)}\bm x_v^\top\bm y_{\pi(v)}\right\}.
\end{align*}
Note that for any $\tau\in \mathbb R$ we have 
\begin{align}
    \nonumber \{d(\hat \pi_\lambda,\pi)=k\}\subseteq&~ \{\exists\; \pi'\in \mathcal T_k, s.t., \lambda\sum_e\beta_e(G_1)\beta_{\pi(e)}(G_2)+(1-\lambda)\sum_v \bm x_v^\top\bm y_{\pi(v)}\\\nonumber
    &-\lambda\sum_e\beta_e(G_1)\beta_{\pi'(e)}(G_2)-(1-\lambda)\sum_v \bm x_v^\top\bm y_{\pi'(v)}\le 0\}\\\nonumber
    =&~\{\exists\; \pi'\in \mathcal T_k, s.t., \lambda\left(\sum_{e\notin \mathcal E}\beta_e(G_1)\beta_{\pi(e)}(G_2)-\sum_{e\notin \mathcal E}\beta_e(G_1)\beta_{\pi'(e)}(G_2)\right)\\\nonumber
    &+(1-\lambda)\left(\sum_{v\notin F} \bm x_v^\top\bm y_{\pi(v)}-\sum_{v\notin F} \bm x_v^\top\bm y_{\pi'(v)}\right)\le 0\}\\\label{eq:apd-term1}
    \subseteq &~ \left\{\exists\; \pi'\in \mathcal T_k, s.t., \lambda\sum_{e\notin \mathcal E}\beta_e(G_1)\beta_{\pi(e)}(G_2)+(1-\lambda)\sum_{v\notin F} \bm x_v^\top\bm y_{\pi(v)}<\tau\right\}\\\label{eq:apd-term2}
    &~\bigcup\left\{\exists\; \pi'\in \mathcal T_k, s.t., \lambda\sum_{e\notin \mathcal E}\beta_e(G_1)\beta_{\pi'(e)}(G_2)+(1-\lambda)\sum_{v\notin F} \bm x_v^\top\bm y_{\pi'(v)}\ge\tau\right\}.
\end{align}

We then upper bound the error probability for~\eqref{eq:apd-term1} and~\eqref{eq:apd-term2}, respectively.
We first consider~\eqref{eq:apd-term1}.

Let $X=(X_1,\cdots,X_{N_k},\tilde X_1,\cdots, \tilde X_k)^\top$ and $Y=(Y_1,\cdots, Y_{N_k},\tilde Y_1,\cdots,\tilde Y_k)^\top$, where $(X_i, Y_i)\overset{i.i.d.}{\sim}\maN(\begin{bmatrix}
        0\\0
    \end{bmatrix}, \begin{bmatrix}
        1&\rho\\\rho&1
    \end{bmatrix})$, and $(\tilde X_i, \tilde Y_i)\overset{i.i.d.}{\sim}\maN(\begin{bmatrix}         \bm0\\\bm0     \end{bmatrix}, \begin{bmatrix}         I_d&rI_d\\rI_d&I_d     \end{bmatrix})$.

Then $$W\triangleq\lambda\sum_{e\notin \binom F2}\beta_e(G_1)\beta_{\pi(e)}(G_2)+(1-\lambda)\sum_{v\notin F} \bm x_v^\top\bm y_{\pi(v)}\stackrel d=X^\top AY,$$ where $A = \mathrm{diag}\sth{\lambda I_{N_k},(1-\lambda)I_{dk}}$ with $\Vert A\Vert_F^2 = \lambda^2 N_k+(1-\lambda)^2 dk$ and $\Vert A\Vert_2 = \lambda\vee (1-\lambda)$. By Lemma \ref{lem:hwforweaksignal}, there exists a universal constant $C$, such that with probability at least $1-\delta_0$,
    $$|W-(\rho\lambda N_k+r(1-\lambda)kd)|\le  C\pth{\Vert A\|_F\sqrt{\log\frac{1}{\delta_0}}+\|A\|_2\log \frac{1}{\delta_0}}.$$ 
    Let $\tau=(\rho\lambda N_k+r(1-\lambda)kd)-C(\|A\|_F\sqrt{\log\frac1{\delta_0}}\vee \|A\|_2\log \frac{1}{\delta_0})$ with $\delta_0 = \exp(-2k\log n)$:\begin{align}\label{eq:def-of-tau-apd}
        \tau = (\rho\lambda N_k+r(1-\lambda)kd) - C(\|A\|_F\sqrt{2k\log n}+\|A\|_2\cdot 2k\log n).
    \end{align} 

We obtain \begin{align}\label{eq:term1-proof}
    \prob{\bigcup_{\pi'\in \calT_k}\{\lambda\sum_{e\notin \mathcal E}\beta_e(G_1)\beta_{\pi(e)}(G_2)+(1-\lambda)\sum_{v\notin F} \bm x_v^T\bm y_{\pi(v)}<\tau\}}\le \binom nk\delta_0\le n^{-k},
\end{align}
where the last inequality is because $\delta_0 = \exp(-2k\log n)$ and $\binom{n}{k}\le n^k$.

We then focus on~\eqref{eq:apd-term2}. We first introduce the following lemma.

\begin{lemma}\label{lem:term2-error-bound}
   Under Assumption~\ref{assm:choiceoflambda}, if $d=\omega(\log n)$ and $n\log\frac1{1-\rho^2}+d\log\frac1{1-r^2}\ge C_0\log n$ for some $C_0\ge \frac{(32(13+C))^2(1+\Gamma)}{\delta^2}$, then for $\tau$ in~\eqref{eq:def-of-tau-apd} and any $\lambda\in (\delta, 1-\delta)$, we have
    \begin{align}\label{eq:term2-proof}
        \prob{\bigcup_{\pi'\in T_k}\left\{\lambda\sum_{e\notin \mathcal E}\beta_e(G_1)\beta_{\pi'(e)}(G_2)+(1-\lambda)\sum_{v\notin F} \bm x_v^T\bm y_{\pi'(v)}\ge\tau\right\}}\le n^{-2k}.
    \end{align}
\end{lemma}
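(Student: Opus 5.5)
We bound the probability in~\eqref{eq:term2-proof} by a union bound over $\mathcal T_k$ together with a Chernoff bound, following the ``bad event of strong noise'' part of the proof of Proposition~\ref{prop:partial-possible}. Write
\[
Z_\lambda \triangleq \lambda\sum_{e\notin \binom F2}\beta_e(G_1)\beta_{\pi'(e)}(G_2)+(1-\lambda)\sum_{v\notin F}\bm x_v^\top\bm y_{\pi'(v)} .
\]
Since $|\mathcal T_k|\le n^k$, it suffices to show $\prob{Z_\lambda\ge\tau}\le n^{-3k}$ for each fixed $\pi'\in\mathcal T_k$. We bound $\prob{Z_\lambda\ge\tau}\le e^{-t\tau}\expect{e^{tZ_\lambda}}$ and factor the moment generating function into an edge part and a vertex part, which are independent.

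\textbf{Step 1: the MGF bound.} We reuse the orbit decomposition of $\sigma=(\pi^*)^{-1}\circ\pi'$. The argument of Lemma~\ref{lem:cumulant} applies with the weights $\varphi(\rho),\varphi(r)$ replaced by $\lambda,1-\lambda$; equivalently, substitute $t\varphi(\rho)\to t\lambda$ and $t\varphi(r)\to t(1-\lambda)$ in the cumulants. This gives
\[
\log\expect{e^{tZ_\lambda}}\le \tfrac{N_k}{2}\kappa_2^{\sfE}+\tfrac{k}{2}\bigl(\kappa_1^{\sfE}-\tfrac12\kappa_2^{\sfE}+\tfrac12\kappa_2^{\sfV}\bigr).
\]
For small arguments we have $\kappa_2^{\sfE}\approx t^2\lambda^2(1+\rho^2)$, $\kappa_2^{\sfV}\approx d\,t^2(1-\lambda)^2(1+r^2)$, and $\kappa_1^{\sfE}=O(t\lambda\rho+t^2\lambda^2)$. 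The key point is that $\expect{Z_\lambda}$ involves only the $O(k)$ fixed edges of $\sigma^{\sfE}$ and no features. Hence, up to lower-order terms, the log-MGF is at most
\[
t^2\bigl(\lambda^2N_k+(1-\lambda)^2dk\bigr)+O(k).
\]

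\textbf{Step 2: lower bound on $\tau$.} By Assumption~\ref{assm:choiceoflambda}, with $\rho,r=o(1)$, both $n\rho^2$ and $dr^2$ are at least $C_0\log n/(1+\Gamma)$. Using $\|A\|_F=\sqrt{\lambda^2N_k+(1-\lambda)^2dk}$, $\|A\|_2\le1$, $\lambda,1-\lambda\ge\delta$, and $N_k\ge nk/4$, we have $\mu\triangleq\rho\lambda N_k+r(1-\lambda)kd$. Cauchy--Schwarz gives $\mu\ge\delta\sqrt{k\|A\|_F^2}\sqrt{(n\rho^2+dr^2)/8}$ type bounds. We want to show that the deviation term $C(\|A\|_F\sqrt{2k\log n}+2k\log n)$ is at most $\mu/2$ once $C_0$ is as large as stated; the $(13+C)^2/\delta^2$ form of $C_0$ signals exactly this comparison. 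This yields $\tau\ge\mu/2$.

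\textbf{Step 3: optimize $t$.} Choose $t\asymp \mu/\|A\|_F^2$, which is small because $\rho,r=o(1)$. The exponent then becomes
\[
-\frac{c\,\mu^2}{\|A\|_F^2}+O(k)+O(k\lambda\rho t).
\]
We show $\mu^2/\|A\|_F^2\ge c'\delta^2 k(n\rho^2+dr^2)\ge c'\delta^2 C_0 k\log n/(1+\Gamma)$, which exceeds $4k\log n$ by the choice of $C_0$. Combined with the union bound, this gives $n^{-2k}$.

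\textbf{Main obstacle.} The main obstacle is Step 3, specifically the mismatch between the signal and the noise scales. The signal $\mu$ weights the two sources by $\rho\lambda$ and $r(1-\lambda)$, while the variance weights them by $\lambda^2$ and $(1-\lambda)^2$. Without Assumption~\ref{assm:choiceoflambda}, one component could dominate the variance while contributing negligible signal. We plan to handle this by bounding $\mu^2/\|A\|_F^2\ge \min\{\rho^2N_k,r^2dk\}\cdot\delta^2$ and invoking the ratio bound $\Gamma$ to compare $\min$ with the sum. We also need to make sure the cumulant expansions remain valid for this choice of $t$, which requires $t\lambda,t(1-\lambda)\lesssim1$; this holds since $t=O(\rho+r)$.
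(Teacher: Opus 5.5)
Your proposal is correct and is essentially the paper's argument. The shared ingredients are: a union bound over $\mathcal T_k$; Chernoff with the orbit-based cumulant bound of Lemma~\ref{lem:cumulant} reweighted by $\lambda,1-\lambda$; and the estimate $\mu/\|A\|_F\ge\min\{\rho\sqrt{N_k},\,r\sqrt{kd}\}$ combined with Assumption~\ref{assm:choiceoflambda}, which is exactly the paper's key step. The one difference is how you handle the Hanson--Wright slack. You absorb it first to get $\tau\ge\mu/2$ and take $t\asymp\mu/\|A\|_F^2$, kept $o(1)$ by $\lambda,1-\lambda\ge\delta$. The paper instead leaves the slack in the exponent and takes $t_0=\frac{1}{16}\sqrt{2k\log n}/\|A\|_F$, so its signal term scales like $\sqrt{C_0}$ rather than your $C_0$.

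One caution. Your displayed bounds $\mu\gtrsim\delta\|A\|_F\sqrt{k(n\rho^2+dr^2)}$ and $\mu^2/\|A\|_F^2\ge c'\delta^2k(n\rho^2+dr^2)$ do not hold with a $\Gamma$-free constant; also, Cauchy--Schwarz gives an upper, not a lower, bound on $\mu$. Rely instead on the min-bound from your last paragraph, which directly gives $\mu^2/\|A\|_F^2\gtrsim kC_0\log n/(1+\Gamma)$.
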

By~\eqref{eq:apd-term1},~\eqref{eq:apd-term2},~\eqref{eq:term1-proof}, and~\eqref{eq:term2-proof}, we obtain\begin{align*}
    \prob{\hat\pi_\lambda\neq \pi^*}\le \sum_{k\ge 1}\pth{n^{-k}+n^{-2k}} = o(1).
\end{align*}

\begin{proof}[Proof of Lemma~\ref{lem:term2-error-bound}]
Let $Z_\lambda\triangleq\lambda\sum_{e\notin \mathcal E}\beta_e(G_1)\beta_{\pi'(e)}(G_2)+(1-\lambda)\sum_{v\notin F} \bm x_v^T\bm y_{\pi'(v)}$. Following a similar argument with Appendix~\ref{apd:proof-exact-possible}, we have $$\prob{Z_\lambda\ge\tau}\le e^{-t\tau}\expect{e^{tZ}}=\exp\left(-t\tau+\frac{N_k}{2}\kappa_2^{\sfE,\lambda}(t)+\frac{k}{2}\Big(\kappa_1^{\sfE,\lambda}(t)-\frac12\kappa_2^{\sfE,\lambda}(t)\Big)+\frac{k}{2}\kappa_2^{\sfV,\lambda}(t)\right),$$
where 
\begin{align*}
    \kappa_2^{\sfE,\lambda}(t)&\triangleq -\frac12\log(1-2t^2\lambda^2(1+\rho^2)+t^4\lambda^4(1-\rho^2)^2),\\  \kappa_2^{\sfV,\lambda}(t)&\triangleq  -\frac d2\log(1-2t^2(1-\lambda)^2(1+r^2)+t^4(1-\lambda)^4(1-r^2)^2),
\end{align*}
and \begin{align*}
    \kappa_1^{\sfE,\lambda}(t)&\triangleq -\frac12\log(1-2t\rho\lambda-t^2\lambda^2(1-\rho^2)),\\
    \kappa_1^{\sfV,\lambda}(t)&\triangleq -\frac d2\log(1-2tr(1-\lambda)-t^2(1-\lambda)^2(1-r^2)).
\end{align*} 

When $t\le \frac1{16}$, $(1\pm\rho)^2,(1\pm r)^2\le 4$, and thus $4t^2\le 1/64$. Since $-\log(1-x)\le \frac{x}{1-x}\le 2x$ holds for all $x\le 1/2$, we have
\begin{align*}
\kappa_2^{\sfE,\lambda}(t)&=-\frac12\log(1-t^2\lambda^2(1+\rho)^2)-\frac12\log(1-t^2\lambda^2(1-\rho)^2)\\
&\le t^2\lambda^2(1+\rho)^2+t^2\lambda^2(1-\rho)^2\le 4t^2\lambda^2,
\end{align*}
where the last inequality follows from $(1+x)^2+(1-x)^2\le 4$ for $0\le x\le 1$. Similarly,
$$\kappa_2^{\sfV,\lambda}(t)\le 4dt^2(1-\lambda)^2,\quad \kappa_1^{\sfE,\lambda}\le 2t\rho\lambda +t^2\lambda^2.$$
Recall that \begin{align}
        \nonumber \tau &= (\rho\lambda N_k+r(1-\lambda)kd) - C(\|A\|_F\sqrt{2k\log n}+\|A\|_2\cdot 2k\log n)\\\label{eq:def-of-tau-2}&=(\rho\lambda N_k+r(1-\lambda)kd)-C\left(\Big(\sqrt{\lambda^2N_k+(1-\lambda)^2kd}\sqrt{2k\log n}\Big)+ \Big((\lambda\vee (1-\lambda))\cdot 2k\log n\Big)\right).
    \end{align} 
Note that $N_k = nk(1-\frac{k+1}{2n})\ge \frac{kn}{3}$. 
Let $P=\rho\lambda(N_k-k)+r(1-\lambda)kd$, $Q=\lambda^2N_k+(1-\lambda)^2kd$.
We have \begin{align}
    \nonumber&~-t\tau+\frac{N_k}{2}\kappa_2^{\sfE,\lambda}(t)+\frac{k}{2}\Big(\kappa_1^{\sfE,\lambda}(t)-\frac12\kappa_2^{\sfE,\lambda}(t)\Big)+\frac{k}{2}\kappa_2^{\sfV,\lambda}(t)\\ \nonumber
    \overset{\mathrm{(a)}}{\le} &~ -t\tau+2N_kt^2\lambda^2+kt\rho\lambda+\frac k2t^2\lambda^2+2kd t^2(1-\lambda)^2\\ \nonumber
    \le&~ -t\tau+3t^2(\lambda^2N_k+(1-\lambda)^2kd)+kt\rho\lambda\\ \nonumber
     \overset{\mathrm{(b)}}{\le}&~ -t\pth{\rho\lambda(N_k-k)+r(1-\lambda)kd}+3t^2(\lambda^2N_k+(1-\lambda)^2kd)\\ \nonumber
    &~ +tC\sqrt{2k\log n(\lambda^2N_k+(1-\lambda)^2kd)}+tC2k\log n\\\label{eq:chernoff-mid-1}
    =&~-tP+3t^2Q+tC\sqrt{2Qk\log n}+tC2k\log n,
\end{align}
where $\mathrm{(a)}$ is because $\kappa_2^{\sfE,\lambda}(t)\le 4t^2 \lambda^2$, $\kappa_1^{\sfE,\lambda}(t)-\frac12\kappa_2^{\sfE,\lambda}(t)\le \kappa_1^{\sfE,\lambda}(t)\le 2t\rho \lambda+t^2\lambda^2$, and $\kappa_2^{\sfV,\lambda}(t)\le 4dt^2(1-\lambda)^2$; $\mathrm{(b)}$ follows from~\eqref{eq:def-of-tau-2} and $\lambda\vee (1-\lambda)\le 1$.

Pick $t_0=\frac{1}{16}\pth{\sqrt{\frac{2k\log n}{Q}}\wedge 1}$. Then  $3t_0^2Q\le \frac 18k\log n$, $t_0C\sqrt{2k\log nQ}\le \frac{C}8\log n$, and $t_0C2k\log n\le \frac{C}8\log n$.
Under Assumption~\ref{assm:choiceoflambda}, since $n\log\frac1{1-\rho^2}+d\log\frac1{1-r^2}\ge C_0\log n$, we have $$n\log\pth{\frac1{1-\rho^2}}\ge \frac{C_0\log n}{1+\Gamma},\quad d\log\pth{\frac1{1-r^2}}\ge \frac{C_0\log n}{1+\Gamma}.$$
Since $n = \omega(\log n)$ and $d = \omega(\log n)$, we have $\rho = o(1)$ and $r = o(1)$. Hence $\log(1/(1-\rho^2)) = (1+o(1)) \rho^2$ and $\log(1/(1-r^2)) = (1+o(1))r^2$. Therefore, \begin{align*}
    n\rho^2 \ge \frac{C_0\log n}{2(1+\Gamma)},\quad dr^2 \ge \frac{C_0\log n}{2(1+\Gamma)}.
\end{align*}
Note that $N_k-k = kn(1-\frac{k+3}{2n})\ge \frac{kn}{3}$ for sufficiently large $n$. Consequently, if $t_0 = \frac{1}{16}$, since $\lambda,1-\lambda\ge \delta$, then \begin{align*}
    t_0P&\ge \frac{\delta}{16}(\frac{\rho kn}{3}+rkd)\ge \frac{\delta k}{64}(n\rho+rd)\\
    &\ge \frac{\delta k}{64}\pth{\sqrt n\sqrt{\frac12 \frac{C_0\log n}{1+\Gamma}}+\sqrt d\sqrt{\frac12\frac{C_0\log n}{1+\Gamma}}}\ge \frac{\sqrt{C_0}\delta k}{128}\frac{\log n}{\sqrt{1+\Gamma}}.
\end{align*}
If $t_0=\frac{1}{16}\sqrt{\frac{2k\log n}{Q}}$, then  
\begin{align*}
    t_0P&\ge \frac{\sqrt{2k\log n}}{32}\frac{\frac12\lambda\rho N_k+(1-\lambda)rkd
    }{\sqrt{\lambda^2N_k+(1-\lambda)^2kd}}\\
    &\stackrel{\text{(a)}}{\ge} \frac{\delta\sqrt{2k\log n}}{32}\frac{(\frac\rho2\sqrt{N_k}\wedge r\sqrt{kd})(\lambda\sqrt{N_k}+(1-\lambda)\sqrt{kd})}{\lambda \sqrt{N_k}+(1-\lambda)\sqrt{kd}}\\
    &=\frac{\delta\sqrt{2k\log n}}{32}(\frac\rho2\sqrt{N_k}\wedge r\sqrt{kd})\ge  \frac{\delta\sqrt{2k\log n}\sqrt{k}}{128}(\sqrt{n\rho^2}\wedge \sqrt{dr^2})\\
    &\ge \frac{\delta}{128}\frac{\sqrt{C_0}k\log n}{\sqrt{1+\Gamma}},
\end{align*}
where $\mathrm{(a)}$ is because $\lambda,1-\lambda \ge \delta $ and $\sqrt{\lambda^2 N_k+(1-\lambda)^2 kd}\le \lambda\sqrt{N_k}+(1-\lambda )\sqrt{kd}$. Therefore, by Chernoff's bound,\begin{align*}
    \prob{Z_\lambda \ge \tau }&\le \exp(-t_0\tau)\expect{\exp(t_0 Z_\lambda)}\\&=\exp\left(-t\tau+\frac{N_k}{2}\kappa_2^{\sfE,\lambda}(t)+\frac{k}{2}\Big(\kappa_1^{\sfE,\lambda}(t)-\frac12\kappa_2^{\sfE,\lambda}(t)\Big)+\frac{k}{2}\kappa_2^{\sfV,\lambda}(t)\right)\\&\overset{\mathrm{(a)}}{\le} \exp\pth{-t_0P+3t_0^2Q+t_0C\sqrt{2kQ\log n }+t_0C2k\log n}\\&\overset{\mathrm{(b)}}{\le} \exp\pth{-\frac{\delta\sqrt{C_0}}{128\sqrt{1+\Gamma}}k\log n+\frac{1+C}{4}k\log n}\le \exp(-3k\log n),
\end{align*}
where $\mathrm{(a)}$ follows from~\eqref{eq:chernoff-mid-1}; $\mathrm{(b)}$ is because $3t_0^2Q\le \frac 18k\log n$, $t_0C\sqrt{2k\log nQ}\le \frac{C}8\log n$, $t_0C2k\log n\le \frac{C}8\log n$, and $t_0P\ge \frac{\delta}{128}\frac{\sqrt{C_0}k\log n}{\sqrt{1+\Gamma}}$.
Applying union bound yields
\begin{align*}
    &\prob{\bigcup_{\pi'\in T_k}\left\{\lambda\sum_{e\notin \mathcal E}\beta_e(G_1)\beta_{\pi'(e)}(G_2)+(1-\lambda)\sum_{v\notin F} \bm x_v^T\bm y_{\pi'(v)}\ge\tau\right\}}\\
        \le &~\binom nk k!\exp\left(\inf_{t>0}\sth{-t\tau+\frac{N_k}{2}\kappa_2^{\sfE,\lambda}(t)+\frac{k}{2}\Big(\kappa_1^{\sfE,\lambda}(t)-\frac12\kappa_2^{\sfE,\lambda}(t)\Big)+\frac{k}{2}\kappa_2^{\sfV,\lambda}(t)}\right)\\
        \le &~ n^k\exp\left(-3k\log n\right)\\
        \le &~\exp\left(k\log n-3k\log n\right)=n^{-2k}.
\end{align*}
\end{proof}

\subsection{Proof of Proposition~\ref{prop:gradient}}\label{apd:proof-prop-gradient}

Let $L\triangleq 2\sth{\lambda(\|A_1\|^2+\|A_2\|^2)^2+(1-\lambda)\sum_{i=1}^d(\|B_1^i\|_2+\|B_2^i\|_2)^2}$. We first show that $\nabla f$ is $L$-Lipschitz. Define the linear operator $$T(X)\triangleq\pth{\sqrt \lambda (A_1X-XA_2), \sqrt{1-\lambda}(B_1^1X-XB_2^1),\cdots, \sqrt{1-\lambda} (B_1^dX-XB_2^d) },$$
    then $f(\Pi)=\|T(\Pi)\|_F^2=\langle \Pi, T^*T\Pi\rangle$ and $\nabla f(\Pi)=2T^*T(\Pi)$, where \(T^*\) denotes the adjoint of \(T\) with respect to the Frobenius inner product 
\(\langle X,Y\rangle_F = \operatorname{tr}(X^\top Y)\). Therefore, \begin{align}\label{eq:L-lip}
    \|\nabla f(X)-\nabla f(Y)\|_F\le 2\|T\|^2\|X-Y\|_F.
\end{align} For each component of $T(X)$, $\|A_1X-XA_2\|_F\le (\|A_1\|_2+\|A_2\|_2)\|X\|_F$, and similarly, $\|B_1^iX-XB_2^i\|_F\le (\|B_1^i\|_2+\|B_2^i\|_2)\|X\|_F$, which implies 
    $$\|T\|^2\le \lambda(\|A_1\|^2+\|A_2\|^2)^2+(1-\lambda)\sum_{i=1}^d(\|B_1^i\|_2+\|B_2^i\|_2)^2.$$
Combining this with~\eqref{eq:L-lip}, we conclude that $\nabla f$ is $L$-Lipschitz.

Recall that $\Pi^{k+1}=\mathsf{Proj}_{\mathbb W^n}(\Pi^k-\eta\nabla f(\Pi^k)).$
Let $Y^k=\Pi^k-\eta\nabla f(\Pi^k)$. Since 
    \begin{align*}
        \|X-Y^k\|_F^2&=\|X-\Pi^k+\eta\nabla f(\Pi^k)\|_F^2\\
        &=\|X-\Pi^k\|_F^2+2\eta\langle \nabla f(\Pi^k),X-\Pi^k\rangle+\eta^2\|\nabla f(\Pi^k)\|_F^2,
    \end{align*}
    we have
    \begin{align*}
        \langle\nabla f(\Pi^k), X-\Pi^k\rangle+\frac1{2\eta}\|X-\Pi^k\|_F^2=\frac{1}{2\eta}\|X-Y^k\|_F^2-\frac\eta2\|\nabla f(\Pi^k)\|_F^2.
    \end{align*}
Therefore, the Euclidean projection
    \begin{align*}
        \mathsf{Proj}_{\mathbb W^n}(\Pi^k-\eta\nabla f(\Pi^k))&=\argmin_{X\in \mathbb W^n}\|X-Y^k\|_F^2\\
        &=\argmin_{X\in \mathbb W^n}\sth{\langle\nabla f(\Pi^k), X-\Pi^k\rangle+\frac1{2\eta}\|X-\Pi^k\|_F^2}.
    \end{align*}
     Since $\mathbb W^n$ is convex, we have that $\Pi^{k+1}+t(\Pi-\Pi^{k+1})\in \mathbb W^n$ for any $t\in (-1,1)$ and $\Pi\in \mathbb W^n$. Since $\Pi^{k+1}$ minimizes $g(X)\triangleq\frac12\|X-Y^k\|_F^2$, we have $\frac{d}{dt}g(\Pi^{k+1}+t(\Pi-\Pi^{k+1}))\Big|_{t=0+}\ge 0$, which implies \begin{align}\label{eq:project2}
         \langle \Pi^{k+1}-Y^k,\Pi-\Pi^{k+1}\rangle\ge 0
     \end{align} for any $\Pi \in \mathbb W^n$. Consequently, take $\Pi = \Pi'$ yields that \begin{align}
         \nonumber \langle \nabla f(\Pi^k),\Pi^{k+1}-\Pi'\rangle &\le \frac{1}{\eta}\langle \Pi^{k+1}-\Pi^k,\Pi^{k+1}-\Pi' \rangle  \\\label{eq:project-prop}&=\frac1{2\eta}\pth{\|\Pi^k-\Pi'\|_F^2-\|\Pi^{k+1}-\Pi'\|_F^2-\|\Pi^{k+1}-\Pi^k\|_F^2}
     \end{align}

We then establish the upper bound for $|f(\Pi^K) - f(\Pi')|$. For $L-$Lipschitz function $f$, we have 
    $$f(Y)\le f(X)+\langle \nabla f(X), Y-X\rangle +\frac L2\|Y-X\|_F^2,\quad \forall X,Y\in \mathbb W^n,$$
    which implies 
    \begin{align}\label{eq:project1}
        f(\Pi^{k+1})\le f(\Pi^k)+\langle \nabla f(\Pi^k),\Pi^{k+1}-\Pi^k\rangle+\frac{L}{2}\|\Pi^{k+1}-\Pi^{k}\|_F^2.
    \end{align}
We decompose the second term as 
    $$\langle \nabla f(\Pi^k),\Pi^{k+1}-\Pi^k\rangle=\langle\nabla f(\Pi^k), \Pi^{k+1}-\Pi'\rangle+\langle \nabla f(\Pi^k),\Pi'-\Pi^k\rangle.$$    
    Since $f$ is convex, $f(\Pi')\ge f(\Pi^k)+\langle \nabla f(\Pi^k), \Pi'-\Pi^k\rangle$. Combining this with~\eqref{eq:project-prop} and~\eqref{eq:project1}, we obtain that 
\begin{align*}
        f(\Pi^{k+1})-f(\Pi')&\le \frac1{2\eta}\pth{\|\Pi^k-\Pi'\|_F^2-\|\Pi^{k+1}-\Pi'\|_F^2-\|\Pi^{k+1}-\Pi^k\|_F^2}+\frac L2\|\Pi^{k+1}-\Pi^k\|_F^2\\
        &= \frac1{2\eta}\pth{\|\Pi^k-\Pi'\|_F^2-\|\Pi^{k+1}-\Pi'\|_F^2}+\frac {L\eta-1}{2\eta}\|\Pi^{k+1}-\Pi^k\|_F^2\\
        &\le \frac1{2\eta}\pth{\|\Pi^k-\Pi'\|_F^2-\|\Pi^{k+1}-\Pi'\|_F^2},
    \end{align*}
    where the last inequality follows from $\eta\le 1/L$.

Take $\Pi = \Pi^k$ in~\eqref{eq:project2}, we obtain that \begin{align*}
    \langle \nabla f(\Pi^k),\Pi^{k+1}-\Pi^k\rangle\le -\frac1\eta\|\Pi^{k+1}-\Pi^k\|_F^2.
\end{align*}
Combining this with~\eqref{eq:project1}, we obtain 
\begin{align*}
    f(\Pi^{k+1})-f(\Pi^k)\le \frac{L\eta-2}{2\eta}\|\Pi^{k+1}-\Pi^k\|_F^2\le 0.
\end{align*}
    Taking sum for $k=0,1,\cdots, K-1$, since $f(\Pi^K)\le f(\Pi^k)$ for any $k=0,1,\cdots K-1$,
    $$K|f(\Pi^K)-f(\Pi')|=K(f(\Pi^K)-f(\Pi'))\le \sum_{k=0}^{K-1}f(\Pi^k)-Kf(\Pi')\le \frac1{2\eta}\|\Pi^0-\Pi'\|_F^2.$$
    The Birkhoff-von Neumann theorem (see, e.g. \cite[Theorem 8.7.2]{horn2012matrix}) states that a doubly stochastic matrix is a convex combination of permutation matrices, which implies $\mathbb W^n$ is the convex hull of $n\times n$ permutation matrices. For any $P,Q\in \mathbb W^n$, $(P,Q)\mapsto \|P-Q\|_F^2$ is convex for each variable, hence the maximum admits on the extreme points of $\mathbb W^n$, i.e., $P,Q$ are permutation matrices. For permutation matrices $P,Q$, $\tr(P^\top P)=\tr(Q^\top Q)=n$. Therefore, 
    $$\|\Pi^0-\Pi'\|_F^2\le \|P-Q\|_F^2=\tr(P^\top P)+\tr(Q^\top Q)-2\tr(P^\top Q)\le 2n.$$
    Consequently, 
    $$|f(\Pi^K)-f(\Pi')|\le \frac{1}{2\eta K} \Vert \Pi^0-\Pi'\Vert_F^2 \le  \frac{n}{\eta K}.$$

\section{Proof of Lemmas}

\subsection{Proof of Lemma~\ref{lem:hwforweaksignal}}
 Note that $W=X^{\top} AY = \frac{1}{4}(X+Y)^\top A(X+Y)-\frac{1}{4}(X-Y)^\top A(X-Y)$ and \begin{align*}
        \expect{(X+Y)^\top A(X+Y)} &= (2+2\rho)\varphi(\rho)N_k+(2+2r)\varphi(r)dk\\\expect{(X-Y)^\top A(X-Y)} &= (2-2\rho)\varphi(\rho)N_k+(2-2r)\varphi(r)dk.
    \end{align*}
    By Hanson-Wright inequality \cite{hanson1971bound}, there exists some universal constant $C$ such that \begin{align*}
        &\mathbb{P}\Bigg[\left| \frac{1}{4}(X+Y)^\top A(X+Y) - \expect{\frac{1}{4}(X+Y)^\top A(X+Y)}\right| \\&~~~~~~\ge \frac{C}{2}\pth{\Vert A\Vert_F \sqrt{\log\pth{\frac{1}{\delta_0}}}\vee\Vert A\Vert_2\log\pth{\frac{1}{\delta_0}}}\Bigg]\le \frac{\delta_0}{2},\\&\mathbb{P}\Bigg[\left| \frac{1}{4}(X-Y)^\top A(X-Y) - \expect{\frac{1}{4}(X-Y)^\top A(X-Y)}\right| \\&~~~~~~\ge \frac{C}{2}\pth{\Vert A\Vert_F \sqrt{\log\pth{\frac{1}{\delta_0}}}\vee\Vert A\Vert_2\log\pth{\frac{1}{\delta_0}}}\Bigg]\le \frac{\delta_0}{2}
    \end{align*} for any $\delta_0>0$.
    Consequently, \begin{align*}
        &~\prob{\left| X^\top A Y -\rho \varphi(\rho) N_k-r\varphi(r)dk\right|\ge C\pth{\Vert A\Vert_F \sqrt{\log\pth{\frac{1}{\delta_0}}}\vee\Vert A\Vert_2\log\pth{\frac{1}{\delta_0}}}}\\ \le &~\mathbb{P}\Bigg[\left| \frac{1}{4}(X+Y)^\top A(X+Y) - \expect{\frac{1}{4}(X+Y)^\top A(X+Y)}\right| \\&~~~~~~\ge \frac{C}{2}\pth{\Vert A\Vert_F \sqrt{\log\pth{\frac{1}{\delta_0}}}\vee\Vert A\Vert_2\log\pth{\frac{1}{\delta_0}}}\Bigg]\\+&~ \mathbb{P}\Bigg[\left| \frac{1}{4}(X-Y)^\top A(X-Y) - \expect{\frac{1}{4}(X-Y)^\top A(X-Y)}\right| \\&~~~~~~\ge \frac{C}{2}\pth{\Vert A\Vert_F \sqrt{\log\pth{\frac{1}{\delta_0}}}\vee\Vert A\Vert_2\log\pth{\frac{1}{\delta_0}}}\Bigg]\le \delta_0.
    \end{align*}
We finish the proof of Lemma~\ref{lem:hwforweaksignal}.

\subsection{Proof of Lemma~\ref{lem:cumulant}}

By~\eqref{eq:mgf-independent}, the cumulant generating function is given by\begin{align*}
    \log \expect{\exp\pth{tZ}} = \log \expect{\exp\pth{tZ^\sfV}}+\log\expect{\exp\pth{tZ^\sfE}}.
\end{align*}

We first calculate $\expect{\exp\pth{tZ^\sfE}}$.
Define the moment generating function (MGF) as $m_k^\sfE = \exp\pth{\kappa_k^\sfE}$ for any $k\ge 1$.
For any edge cycle $C = \sth{e_1,e_2,\cdots,e_k}$ with $e_{i+1} = \sigma^\sfE(e_i)$ for all $1\le i\le k-1$ and $e_1 = \sigma^\sfE(e_k)$, let $A_{i-1} = \beta_{e_i}(G_1)$ and $B_i = \beta_{\pi'(e_i)}(G_2)$ for any $1\le i\le k$, and we set $A_k = A_0$ for notational simplicity. Since $\pi^*(e_{i+1}) = \pi'(e_i)$, each pair $(A_i,B_i)$ follows bivariate normal distribution $\maN\pth{0,\begin{bmatrix}
    1&\rho\\\rho&1
\end{bmatrix})}$, and thus the conditional distribution is given by $A_i\vert B_i\sim \maN(\rho B_i,1-\rho^2)$. Consequently, the MGF is given by \begin{align*}
    m_k^\sfE &= \expect{\expect{\prod_{i=1}^k \exp\pth{t\varphi(\rho) A_{i-1} B_i}\big| B_1,\cdots B_k }}\\&=\expect{\prod_{i=1}^k \exp\pth{t\rho \varphi(\rho)B_{i-1}B_i+\frac{1}{2}t^2 \varphi(\rho)^2 B_{i}^2(1-\rho^2)}},
\end{align*}
where the last equality is because $\expect{\exp\pth{tX}} = \exp\pth{t\mu+\frac{1}{2} t^2 \sigma^2}$ for $X\sim \maN(\mu,\sigma^2)$.

Let $\lambda_1,\lambda_2$ denote the roots of the quadratic function $x^2 - \qth{1-t^2 \varphi(\rho)^2(1-\rho^2)} x + t^2 \varphi(\rho)^2 \rho^2 = 0$. Since $t \le \frac{1}{\rho}-2$, we have $\lambda_1+\lambda_2>0$ and the discriminant $\qth{1-t^2 \varphi(\rho)^2(1-\rho^2)}^2-4t^2 \varphi(\rho)^2 \rho^2 >0$. Since $\lambda_1 \lambda_2>0$, we have $\lambda_1>\lambda_2>0$. Define the matrix
\begin{align*}
    \mathbf{J}_{k} \triangleq \begin{bmatrix}
        \lambda_1^{1/2} & -\lambda_{2}^{1/2} &0& \cdots  &0 \\
        0&\lambda_1^{1/2 }&-\lambda_2^{1/2 } &\cdots &0\\
        0&0& \lambda_1^{1/2 }&\cdots &0\\
        \vdots & \vdots &\vdots &\ddots &-\lambda_2^{1/2 }\\
        -\lambda_2^{1/2 } &0&\cdots 0&0& \lambda_1^{1/2 }
    \end{bmatrix}\in \mathbb{R}^{k\times k}.
\end{align*}
Denote $\mathbf{B}_k = \qth{B_1,B_2,\cdots,B_k}^\top$. Then we have \begin{align*}
    m_{k}^\sfE & = \expect{\prod_{i=1}^k \exp\pth{t\rho \varphi(\rho)B_{i-1}B_i+\frac{1}{2}t^2 \varphi(\rho)^2 B_{i}^2(1-\rho^2)}} \\
    &= \idotsint  
 \pth{\frac{1}{\sqrt{2\pi}}}^k \exp\pth{-\frac{1}{2} \sum_{i=1}^{k}\pth{ B_i^2- \pth{2 t\rho\varphi(\rho) B_{i-1}B_{i}+t^2\varphi(\rho)^2(1-\rho^2)B_{i}^2}}} \,\\&~~~~\mathrm{d} B_1\cdots \mathrm{d} B_{k}\\
 &= \idotsint  
 \pth{\frac{1}{\sqrt{2\pi}}}^k \exp\pth{-\frac{1}{2} \sum_{i=1}^{k} \pth{\lambda_1^{1/2} B_{i-1}- \lambda_2^{1/2} B_{i}}^2} \,\mathrm{d} B_1\cdots \mathrm{d} B_{k}\\ &=\idotsint \pth{\frac{1}{\sqrt{2\pi}}}^{k} \exp\pth{-\frac{1}{2} \mathbf{B}_{k}^\top  {\mathbf{J}}_{k}^\top {\mathbf{J}}_{k} \mathbf{B}_{k}}\, \mathrm{d} B_1\cdots \mathrm{d} B_{k} = \qth{\det({\mathbf{J}}_{k})}^{-1}=\frac{1}{\lambda_1^{k/2}-\lambda_2^{k/2}}.
\end{align*}

We then calculate $\expect{\exp\pth{tZ^\sfV}}$.
Define the moment generating function (MGF) as $m_k^\sfV = \exp\pth{\kappa_k^\sfV}$ for any $k\ge 1$.
For any vertex cycle $C=\sth{v_1,\cdots,v_k}$ with $v_{i+1} = \sigma(v_i)$ for any $1\le i\le k-1$ and $v_1 = \sigma(v_k)$, let $\ti{A}_{i-1} = \bm{x}_{v_i}$ and $\ti{B}_{i} = \bm{y}_{\pi'(v_i)}$, and we set $\ti{A}_k = \ti{A}_0$ for notational simplicity. Since $\pi^*(v_{i+1}) = \pi'(v_i)$, each pair $(\tilde A_i,\tilde B_i)\sim \mathcal N(\bm 0, \begin{bmatrix}
     I_d&rI_d\\rI_d&I_d
 \end{bmatrix})$. Similarly,\begin{align*}
     m_k^\sfV &= \expect{\expect{\prod_{i=1}^k\exp\pth{t\varphi(r)\ti{A}_{i-1}^\top \ti{B}_i}\big| \ti{B}_1,\cdots \ti{B}_k}}\\&=\expect{\prod_{i=1}^k\exp\pth{tr\varphi(r)\ti{B}_{i-1}^\top \ti{B}_i+\frac{1}{2}t^2 \varphi(r)^2\ti{B}_i^\top \ti{B}_i(1-r^2)}}\\&=\prod_{j=1}^d \expect{\prod_{i=1}^k\exp\pth{tr\varphi(r)\ti{B}_{i-1,j}^\top \ti{B}_{i,j}+\frac{1}{2}t^2 \varphi(r)^2\ti{B}_{i,j}^\top \ti{B}_{i,j}(1-r^2)}},
 \end{align*}
where $\ti{B}_{i,j}$ denotes the $j-$th element of vector $\ti{B}_i$ and the last equality is because $\ti{B}_{i,j}$ and $\ti{B}_{i',j'}$ are independent for any $(i,j)\neq (i',j')$.
Let $\mu_1>\mu_2$ denote two roots of the quadratic equation $x^2-\qth{1-t^2\varphi(r)^2(1-r^2)}x+t^2\varphi(r)^2r^2=0$. Since $t\le \frac{1}{r}-2$, we have $\mu_1+\mu_2>0$ and the discriminant $\qth{1-t^2 \varphi(r)^2(1-r^2)}^2-4t^2\varphi(r)^2 r^2>0$. Since $\mu_1\mu_2>0$, we have $\mu_1>\mu_2>0$.
By a similar argument with calculation for the edge cycle, we have \begin{align*}
    m_k^\sfV = \pth{\frac{1}{\mu_1^{k/2}-\mu_2^{k/2}}}^{d}.
\end{align*}
For any $k\ge 2$, we have $\lambda_1^{k/2}-\lambda_2^{k/2}\ge (\lambda_1-\lambda_2)^{k/2}$, and thus $m_k^\sfE\le (m_2^\sfE)^{k/2}$ for any $k\ge 2$. Similarly, $m_k^\sfV\le (m_2^\sfV)^{k/2}$ for any $k\ge 2$. 
Recall $Z^\sfE$ and $Z^\sfV$ defined in~\eqref{eq:ZEZV}.
We have \begin{align*}
    \log \expect{\exp(tZ)} &= \log \expect{\exp(t Z^\sfE)}+\log\expect{\exp(tZ^\sfV)}\\
    &=\sum_{C\in \maC^\sfE\backslash\binom{F}{2}} \kappa_{|C|}^{\sfE}+\sum_{C\in \maC^\sfV\backslash F} \kappa_{|C|}^\sfV\\
    &\overset{\mathrm{(a)}}{\le} \sum_{i\ge 2}\sum_{C\in \maC_i^\sfE} \frac{|C|}{2} \kappa_2^\sfE(t)+\sum_{C\in \maC_1^\sfE\backslash\binom{F}{2}} \kappa_1^\sfE(t)+\sum_{i\ge 2}\sum_{C\in \maC_i^\sfV} \frac{|C|}{2}\kappa_2^\sfV(t)\\
    &=\sum_{C\in \maC^\sfE\backslash\binom{F}{2}} \frac{|C|}{2}\kappa_2^\sfE(t)+\sum_{C\in \maC_1^\sfE\backslash\binom{F}{2}}\pth{\kappa_1^\sfE(t)-\frac{1}{2}\kappa_2^\sfE(t)}+\sum_{C\in \maC^\sfV\backslash F} \frac{|C|}{2}\kappa_2^\sfV(t)\\
    &\overset{\mathrm{(b)}}{\le} \frac{N_k}{2} \kappa_2^\sfE(t)+\frac{k}{2}\pth{\kappa_1^\sfE-\frac{1}{2}\kappa_2^\sfE(t)+\frac{1}{2}\kappa_2^\sfV(t)},
\end{align*}
 where $\mathrm{(a)}$ is because $\kappa_k^\sfE(t)\le \frac{k}{2}\kappa_2^\sfE(t)$ and $\kappa_k^\sfV(t)\le \frac{k}{2}\kappa_2^\sfV(t)$ for any $k\ge 2$; $(\mathrm{b})$ is because $\sum_{C\in \maC^\sfE\backslash\binom{F}{2}} {|C|} = \binom{n}{2}-\binom{n-k}{2}=N_k$, $\sum_{C\in \maC^\sfV\backslash F} {|C|} = n-(n-k)=k$. It remains to show $|\maC_1^\sfE\backslash \binom{F}{2}|\le \frac{k}{2}$. Indeed, for any $e=uv\in C\in \maC_1^\sfE\backslash \binom{F}{2}$, we have $\pi'(uv) = \pi^*(uv)$. Since $e\notin \binom{F}{2}$, we have $\pi'(u) = \pi^*(v)$ and $\pi'(v) =\pi^*(u)$, which contribute two
mismatched vertices in the reconstruction of the underlying mapping. Since the total number of mismatched vertices for $\pi\in \maT_k$ equals $k$, we have $|\maC_1^\sfE\backslash \binom{F}{2}|\le \frac{k}{2}$. Therefore, we finish the proof of Lemma~\ref{lem:cumulant}.

\subsection{Proof of Lemma~\ref{lem:cumulant-small}}

The cumulant generating function is given by\begin{align*}
    \log \expect{\exp\pth{tY}} = \log \expect{\exp\pth{tY^\sfV}}+\log\expect{\exp\pth{tY^\sfE}}.
\end{align*}

We first calculate $\expect{\exp\pth{tY^\sfE}}$.
Define the moment generating function (MGF) as $\ti{m}_k^\sfE = \exp\pth{\mu_k^\sfE}$ for any $k\ge 1$.
For any edge cycle $C = \sth{e_1,e_2,\cdots,e_k}$ with $e_{i+1} = \sigma^\sfE(e_i)$ for all $1\le i\le k-1$ and $e_1 = \sigma^\sfE(e_k)$, let $A_{i-1} = \beta_{e_i}(G_1)$ and $B_i = \beta_{\pi'(e_i)}(G_2)$ for any $1\le i\le k$, and we set $A_k = A_0$ for notational simplicity. Since $\pi^*(e_{i+1}) = \pi'(e_i)$, each pair $(A_i,B_i)$ follows bivariate normal distribution $\maN\pth{0,\begin{bmatrix}
    1&\rho\\\rho&1
\end{bmatrix})}$, and thus the conditional distribution is given by $A_i\vert B_i\sim \maN(\rho B_i,1-\rho^2)$. Consequently, the MGF is given by \begin{align*}
    \ti{m}_k^\sfE &= \expect{\expect{\prod_{i=1}^k \exp\pth{t\varphi(\rho) (A_{i-1} B_i-A_{i-1}B_{i-1})}\big| B_1,\cdots B_k }}\\&=\expect{\prod_{i=1}^k \exp\pth{t\rho \varphi(\rho)B_{i-1}(B_i-B_{i-1})+\frac{1}{2}t^2 \varphi(\rho)^2( B_{i}-B_{i-1})^2(1-\rho^2)}}\\
    &=\expect{\prod_{i=1}^k\exp\pth{(t\rho \varphi(\rho)-t^2\varphi(\rho)^2(1-\rho^2))(B_{i-1}B_i-B_{i}^2)}},
\end{align*}
where the second equality is because $\expect{\exp\pth{tX}} = \exp\pth{t\mu+\frac{1}{2} t^2 \sigma^2}$ for $X\sim \maN(\mu,\sigma^2)$.

Let $\lambda_1,\lambda_2$ denote the roots of the quadratic function $$x^2 -  \qth{1-2(t^2\varphi(\rho)^2(1-\rho^2)-t\rho \varphi(\rho))}x + (t^2\varphi(\rho)^2(1-\rho^2)-t\rho \varphi(\rho))^2 = 0.$$ Since $0<t<1$, we have \begin{align*}
    f(t,\rho)\triangleq t^2\varphi(\rho)^2(1-\rho^2)-t\rho \varphi(\rho) = (t^2-t)\frac{\rho^2}{1-\rho^2}\in \pth{-\frac{1}{4},0}.
\end{align*}
Therefore, 
we have $\lambda_1+\lambda_2=1-2f(t,\rho)>0$, $\lambda_1\lambda_2 = f(t,\rho)^2>0$ and the discriminant $(1-2f(t,\rho))^2-4f(t,\rho)^2=1-4f(t,\rho) >0$, and thus $\lambda_1>\lambda_2>0$. Define the matrix
\begin{align*}
    \mathbf{J}_{k} \triangleq \begin{bmatrix}
        \lambda_1^{1/2} & -\lambda_{2}^{1/2} &0& \cdots  &0 \\
        0&\lambda_1^{1/2 }&-\lambda_2^{1/2 } &\cdots &0\\
        0&0& \lambda_1^{1/2 }&\cdots &0\\
        \vdots & \vdots &\vdots &\ddots &-\lambda_2^{1/2 }\\
        -\lambda_2^{1/2 } &0&\cdots 0&0& \lambda_1^{1/2 }
    \end{bmatrix}\in \mathbb{R}^{k\times k}.
\end{align*}
Denote $\mathbf{B}_k = \qth{B_1,B_2,\cdots,B_k}^\top$. Then we have \begin{align*}
    \ti{m}_{k}^\sfE & = \expect{\prod_{i=1}^k \exp\pth{(t\rho \varphi(\rho)-t^2\varphi(\rho)^2(1-\rho^2))(B_{i-1}B_i-B_{i}^2)}} \\&= \idotsint  
 \pth{\frac{1}{\sqrt{2\pi}}}^k \exp\pth{-\frac{1}{2} \sum_{i=1}^{k} B_i^2}\\&~~~~\exp\pth{\sum_{i=1}^{k} \qth{{(t\rho \varphi(\rho)-t^2\varphi(\rho)^2(1-\rho^2))(B_{i-1}B_i-B_{i}^2)}}} \,\mathrm{d} B_1\cdots \mathrm{d} B_{k}\\&= \idotsint  
 \pth{\frac{1}{\sqrt{2\pi}}}^k \exp\pth{-\frac{1}{2} \sum_{i=1}^{k} \pth{\lambda_1^{1/2} B_{i-1}- \lambda_2^{1/2} B_{i}}^2} \,\mathrm{d} B_1\cdots \mathrm{d} B_{k}\\ &=\idotsint \pth{\frac{1}{\sqrt{2\pi}}}^{k} \exp\pth{-\frac{1}{2} \mathbf{B}_{k}^\top  {\mathbf{J}}_{k}^\top {\mathbf{J}}_{k} \mathbf{B}_{k}}\, \mathrm{d} B_1\cdots \mathrm{d} B_{k}\\ &= \qth{\det({\mathbf{J}}_{k})}^{-1}=\frac{1}{\lambda_1^{k/2}-\lambda_2^{k/2}}.
\end{align*}

We then calculate $\expect{\exp\pth{tY^\sfV}}$.
Define the moment generating function (MGF) as $\ti{m}_k^\sfV = \exp\pth{\mu_k^\sfV}$ for any $k\ge 1$.
For any vertex cycle $C=\sth{v_1,\cdots,v_k}$ with $v_{i+1} = \sigma(v_i)$ for any $1\le i\le k-1$ and $v_1 = \sigma(v_k)$, let $\ti{A}_{i-1} = \bm{x}_{v_i}$ and $\ti{B}_{i} = \bm{y}_{\pi'(v_i)}$, and we set $\ti{A}_k = \ti{A}_0$ for notational simplicity. Since $\pi^*(v_{i+1}) = \pi'(v_i)$, each pair $(\tilde A_i,\tilde B_i)\sim \mathcal N(\bm 0, \begin{bmatrix}
     I_d&rI_d\\rI_d&I_d
 \end{bmatrix})$. Similarly,\begin{align*}
     m_k^\sfV &= \expect{\expect{\prod_{i=1}^k\exp\pth{t\varphi(r)(\ti{A}_{i-1}^\top \ti{B}_i-\ti{A}_{i-1}^\top \ti{B}_{i-1})}\big| \ti{B}_1,\cdots \ti{B}_k}}\\&=\expect{\prod_{i=1}^k\exp\pth{tr\varphi(r)\ti{B}_{i-1}^\top (\ti{B}_i-\ti{B}_{i-1})+\frac{1}{2}t^2 \varphi(r)^2(\ti{B}_i-\ti{B}_{i-1})^\top (\ti{B}_i-\ti{B}_{i-1})(1-r^2)}}\\&=\prod_{j=1}^d \expect{\prod_{i=1}^k\exp\pth{(tr\varphi(r)-t^2\varphi(r)^2(1-r^2))(B_{i-1,j}B_{i,j}-B_{i,j}^2)}},
 \end{align*}
where $\ti{B}_{i,j}$ denotes the $j-$th element of vector $\ti{B}_i$ and the last equality is because $\ti{B}_{i,j}$ and $\ti{B}_{i',j'}$ are independent for any $(i,j)\neq (i',j')$.
Let $\mu_1>\mu_2$ denote two roots of the quadratic equation $$x^2 -  \qth{1-2(t^2\varphi(r)^2(1-r^2)-tr \varphi(r))}x + (t^2\varphi(r)^2(1-r^2)-tr \varphi(r))^2 = 0.$$

Since $0<t<1$, we have \begin{align*}
    f(t,r)\triangleq t^2\varphi(r)^2(1-r^2)-tr \varphi(r) = (t^2-t)\frac{r^2}{1-r^2}\in \pth{-\frac{1}{4},0}.
\end{align*}
Therefore, 
we have $\lambda_1+\lambda_2=1-2f(t,r)>0$, $\lambda_1\lambda_2 = f(t,r)^2>0$ and the discriminant $(1-2f(t,r))^2-4f(t,r)^2=1-4f(t,r) >0$, and thus $\mu_1>\mu_2>0$.
By a similar argument with calculation for the edge cycle, we have \begin{align*}
    \ti{m}_k^\sfV = \pth{\frac{1}{\mu_1^{k/2}-\mu_2^{k/2}}}^{d}.
\end{align*}
For any $k\ge 2$, we have $\lambda_1^{k/2}-\lambda_2^{k/2}\ge (\lambda_1-\lambda_2)^{k/2}$, and thus $\ti{m}_k^\sfE\le (\ti{m}_2^\sfE)^{k/2}$ for any $k\ge 2$. Similarly, $\ti{m}_k^\sfV\le (\ti{m}_2^\sfV)^{k/2}$ for any $k\ge 2$.

Then, we upper bound $\log\expect{\exp\pth{tY}}$. We have \begin{align*}
    \log \expect{\exp(tY)} &= \log \expect{\exp(t Y^\sfE)}+\log\expect{\exp(tY^\sfV)}\\
    &=\sum_{C\in \maC^\sfE\backslash\maC^\sfE_1} \mu_{|C|}^{\sfE}+\sum_{C\in \maC^\sfV\backslash F} \mu_{|C|}^\sfV\\
    &\le \sum_{C\in \maC^\sfE\backslash\maC^\sfE_1}  \frac{|C|}{2} \mu_2^\sfE(t)+\sum_{C\in \maC^\sfV\backslash F} \frac{|C|}{2}\mu_2^\sfV(t),
\end{align*}
 where the inequality is because $\mu_k^\sfE(t)\le \frac{k}{2}\mu_2^\sfE(t)$ and $\mu_k^\sfV(t)\le \frac{k}{2}\mu_2^\sfV(t)$ for any $k\ge 2$.
 We note that \begin{align*}
     \mu_2^\sfE(t) = -\frac{1}{2}\log\pth{1+\frac{\rho^2}{1-\rho^2}(4t-4t^2)}<0,\text{ for any }0<t<1.
 \end{align*} Consequently,
 \begin{align*}
     \log \expect{\exp(tY)}&\le\sum_{C\in \maC^\sfE\backslash\maC^\sfE_1}  \frac{|C|}{2} \mu_2^\sfE(t)+\sum_{C\in \maC^\sfV\backslash F} \frac{|C|}{2}\mu_2^\sfV(t)\\
    &\le \frac{1}{2}\pth{N_k-\frac{k}{2}}\mu_2^\sfE(t)+\frac{k}{2}\mu_2^\sfV(t),
 \end{align*}
where the inequality is because $\sum_{C\in \maC^\sfE\backslash\binom{F}{2}} {|C|} = \binom{n}{2}-\binom{n-k}{2}=N_k$, $\sum_{C\in \maC^\sfV\backslash F} {|C|} = n-(n-k)=k$, and $|\maC_1^\sfE\backslash\binom{F}{2}|\le \frac{k}{2}$. We finish the proof of Lemma~\ref{lem:cumulant-small}.

\subsection{Proof of Lemma~\ref{lem:mutual-pack}}\label{apd:proof-mutual-pack}

We first lower bound the packing number $|\maM_\delta|$. For any $0<\delta<1$ and $\pi\in \maS_n$, let $B(\pi,r)\triangleq \sth{\pi':\sfd(\pi,\pi')\le r}$ denote the ball of radius $r$ centered at $\pi$. By a standard volume argument~\cite[Theorem 27.3]{polyanskiy2025information}, we have \begin{align*}
    |\maM_\delta| \ge \frac{|\maS_n|}{\max_\pi\vert B(\pi, (1-\delta )n)\vert } = \frac{n!}{\max_\pi\vert B(\pi, (1-\delta )n)\vert}. 
\end{align*}
To upper bound $\vert B(\pi,(1-\delta)n)\vert$, we first  choose $\delta n$ elements from the domain of $\pi$ and map to the same value as $\pi$, and the remaining
domain and range of size $n-\delta n$ and the mapping are selected arbitrarily. We get $B(\pi,(1-\delta)n)\le \binom{n}{\delta n} (n-\delta n)!$. Consequently,
\begin{align*}
    |\maM_\delta|\ge \frac{n!}{\max_\pi\vert B(\pi, (1-\delta )n)\vert}\ge \frac{n!}{\binom{n}{\delta n} (n-\delta n)!}=(\delta n)! \ge \pth{\frac{\delta n}{e}}^{\delta n}.
\end{align*}

We then upper bound the mutual information. 
Recall that the likelihood function is given by 
$$\mathcal P_{G_1,G_2\mid\pi^*}=\prod_{e\in E(G_1)}P(\beta_e(G_1),\beta_{\pi^*(e)}(G_2))\prod_{v\in V(G_1)}f(\bm x_v,\bm y_{\pi^*(v)}).$$
Next, we introduce an auxiliary distribution $\mathcal{Q}$ under which $G_1$ and $G_2$ are independent, while maintaining the same marginals as under $\mathcal{P}$. Denote $Q(\cdot,\cdot)$ as the distribution of two independent standard normals and $g(\bm{x}, \bm{y})$ as the multivariate normal distribution $\mathcal N\left(\bm 0,\begin{bmatrix}
    I_d&O\\ O&I_d
\end{bmatrix}\right)$. Then
$$\mathcal Q_{G_1,G_2}=\prod_{e\in E(G_1)}Q(\beta_e(G_1),\beta_{\pi^*(e)}(G_2))\prod_{v\in V(G_1)}g(\bm x_v, \bm y_{\pi (v)}).$$
The KL-divergence between the product measures $\maP_{G_1,G_2|\pi^*}$ and $\maQ_{G_1,G_2}$ is given by
$$D_{\mathrm{KL}}(\mathcal P_{G_1,G_2\mid \pi^*}\| \mathcal Q_{G_1,G_2})=\binom n2 D_{\mathrm{KL}}(P\|Q)+nD_{\mathrm{KL}}(f\|g).$$
We note that \begin{align*}
    D_{\mathrm{KL}}(P\Vert Q) &= \iint P(a,b)\log\pth{\frac{P(a,b)}{Q(a,b)}}\,\mathrm{d}a\mathrm{d}b\\
    &=\iint P(a,b)\qth{\frac{1}{2}\log\pth{\frac{1}{1-\rho^2}}+\frac{\rho ab}{1-\rho^2}-\frac{\rho^2(a^2+b^2)}{2(1-\rho^2)}}\,\mathrm{d}a\mathrm{d}b\\&=\frac{1}{2}\log\pth{\frac{1}{1-\rho^2}}+\frac{\rho^2}{1-\rho^2}-\frac{2\rho^2}{2(1-\rho^2)}=\frac{1}{2}\log\pth{\frac{1}{1-\rho^2}}.
\end{align*}
Similarly, $D_{\mathrm{KL}}(f\Vert g) = \frac{d}{2}\log\pth{\frac{1}{1-r^2}}$.
Consequently,\begin{align*}
    I(\pi^*; G_1, G_2)&=\E_{\pi^*}\qth{D_{\mathrm{KL}}(\mathcal P_{G_1,G_2\mid \pi^*}\|\mathcal P_{G_1,G_2})}\\
    &\le \max_{\pi\in\maS_n} D_{\mathrm{KL}}(\mathcal P_{G_1,G_2\mid \pi}\|\mathcal Q_{G_1,G_2})=\binom n2 \frac12\log(\frac1{1-\rho^2})+\frac{nd}{2}\log(\frac1{1-r^2}).
\end{align*}
\subsection{Proof of Lemma~\ref{lem:d=3}}

In this subsection, without loss of generality, we assume $V(G_1) = V(G_2) = [n]$. Define adjacent matrices $A,B\in \mathbb{R}^{n\times n}$  with $A_{ij} = \beta_{ij}(G_1)$ and $B_{ij}=\beta_{ij}(G_2)$ for any $1\le i<j\le n$. Let $X,Y\in \mathbb{R}^{n\times 1}$ with $X_i = \bm{x}_i$ and $Y_i = \bm{y}_i$ with $1\le i\le n$. For any $\pi\in \maS_n$, define $A^\pi\in \mathbb{R}^{n\times n}$ with $A^\pi_{ij} = A_{\pi(i)\pi(j)}$ for any $1\le i<j\le n$, $X^\pi\in\mathbb{R}^{n\times 1}$ with $X^\pi_i = X_{\pi(i)}$ for any $1\le i\le n$. For two matrices $A$ and $B$, define the inner product as $\langle A,B\rangle = \sum_{1\le i<j\le n}A_{ij}B_{ij}$. Then, \begin{align*}
    \score_\pi(G_1,G_2) &= \varphi(\rho)\sum_{e\in E(G_1)} \beta_e(G_1) \beta_{\pi(e)}(G_2)+\varphi(r)\sum_{v\in V(G_1)} \bm{x}_v \bm{y}_{\pi(v)}\\&=\varphi(\rho) \langle A,B^\pi\rangle+\varphi(r) \langle X, Y^\pi\rangle.
\end{align*}

For any $\pi_1\neq\pi_2\in \maT_2$ with $\sfd(\pi_1,\pi_2) = 3$, \begin{align*}
    &~\prob{(G_1,G_2)\in \maE(\pi^*,\pi_1)\cap \maE(\pi^*,\pi_2)}\\=&~\expect{\indc{\score_{\pi^*}(G_1,G_2)\le \score_{\pi_1}(G_1,G_2)}\indc{\score_{\pi^*}(G_1,G_2)\le \score_{\pi_2}(G_1,G_2)}}\\\le&~\expect{\exp\pth{\frac{1}{2}\pth{\score_{\pi_1}(G_1,G_2)- \score_{\pi^*}(G_1,G_2)}}\exp\pth{\frac{1}{2}\pth{\score_{\pi_2}(G_1,G_2)- \score_{\pi^*}(G_1,G_2)}}}\\=&~\expect{\exp\pth{\varphi(\rho)\pth{\frac{1}{2}\langle A,B^{\pi_1}\rangle+\frac{1}{2}\langle A,B^{\pi_2}\rangle-\langle A,B^{\pi^*}\rangle}}}\\&~~~~\cdot\expect{\exp\pth{\varphi(r)\pth{\frac{1}{2}\langle X,Y^{\pi_1}\rangle+\frac{1}{2}\langle X,Y^{\pi_2}\rangle-\langle X,Y^{\pi^*}\rangle}}}.
\end{align*}

For simplicity, we denote $\mathrm{d}A\mathrm{d}B = \mathrm{d}A_{12}\mathrm{d}A_{13}\cdots \mathrm{d}A_{n-1\, n}\mathrm{d}B_{12}\mathrm{d}B_{13}\cdots \mathrm{d}B_{n-1\, n}$.
For the first term, we note that \begin{align*}
    &~\expect{\exp\pth{\varphi(\rho)\pth{\frac{1}{2}\langle A,B^{\pi_1}\rangle+\frac{1}{2}\langle A,B^{\pi_2}\rangle-\langle A,B^{\pi^*}\rangle}}}\\=&~\pth{\frac{1}{2\pi\sqrt{1-\rho^2}}}^{\binom{n}{2}}\\&~~~~\cdot\idotsint\exp\pth{\frac{\varphi(\rho)}{2}\pth{\langle A,B^{\pi_1}\rangle+\langle A,B^{\pi_2}\rangle}-\frac{1}{2(1-\rho^2)}\pth{\Vert A\Vert_F^2+\Vert B\Vert_F^2}} \mathrm{d}A\mathrm{d}B.
\end{align*}
Let $\mathrm{vec}(A) = (A_{12},A_{13},\cdots,A_{21},\cdots A_{n-1\ n})$ for any adjacent matrix $A$. For any $\pi_1\neq \pi_2\in \maT_2$, define permutation matrices $\Pi^\sfE_1$ and $\Pi^\sfE_2\in \sth{0,1}^{\binom{n}{2}\times \binom{n}{2}}$ as \begin{align*}
    \mathrm{vec}(B^{\pi_1}) = \Pi_1^\sfE \mathrm{vec}(B),\quad \mathrm{vec}(B^{\pi_2}) = \Pi_2^\sfE \mathrm{vec}(B).
\end{align*}
Then, \begin{align*}
    &~\frac{\varphi(\rho)}{2}\pth{\langle A,B^{\pi_1}\rangle+\langle A,B^{\pi_2}\rangle}-\frac{1}{2(1-\rho^2)}\pth{\Vert A\Vert_F^2+\Vert B\Vert_F^2}\\=&~\frac{\rho}{2(1-\rho^2)}\mathrm{vec}(A)^\top (\Pi_1^\sfE+\Pi_2^\sfE)\mathrm{vec}(B)-\frac{1}{2(1-\rho^2)}(\Vert \mathrm{vec}(A)\Vert_2^2+\Vert \mathrm{vec}(B)\Vert_2^2)\\=&~-\frac{1}{2(1-\rho^2)}\begin{bmatrix}
      \mathrm{vec}(A)\\\mathrm{vec}(B)  
    \end{bmatrix}^\top \Sigma\begin{bmatrix}
      \mathrm{vec}(A)\\\mathrm{vec}(B)  
    \end{bmatrix},
\end{align*}
where $\Sigma\triangleq \begin{bmatrix}
        I_{\binom{n}{2}\times \binom{n}{2}}&-\frac\rho2 (\Pi^\sfE_1+\Pi_2^\sfE)\\-\frac{\rho}{2}
        (\Pi^\sfE_1+\Pi_2^\sfE)^\top&I_{\binom{n}{2}\times \binom{n}{2}}
    \end{bmatrix}$. Therefore, \begin{align*}
        &~\expect{\exp\pth{\varphi(\rho)\pth{\frac{1}{2}\langle A,B^{\pi_1}\rangle+\frac{1}{2}\langle A,B^{\pi_2}\rangle-\langle A,B^{\pi^*}\rangle}}}\\=&~\pth{\frac{1}{2\pi\sqrt{1-\rho^2}}}^{\binom{n}{2}}\idotsint\exp\pth{-\frac{1}{2(1-\rho^2)}\begin{bmatrix}
      \mathrm{vec}(A)\\\mathrm{vec}(B)  
    \end{bmatrix}^\top \Sigma\begin{bmatrix}
      \mathrm{vec}(A)\\\mathrm{vec}(B)  
    \end{bmatrix}}\mathrm{d}A\mathrm{d}B\\=&~\sqrt{\frac{(1-\rho^2)^{\binom{n}{2}}}{\det(\Sigma)}}.
    \end{align*}

Let $\sigma \triangleq \pi_2\circ \pi_1^{-1}$
Recall that $\maC_i^\sfE$ and $\maC_i^\sfV$ denote the set of edge orbits and  node orbits with length $i$ induced by $\sigma$. 
It follows from~\cite[Lemmas 4.2 and 4.3]{dai2019database} that \begin{align*}
    \det(\Sigma) &=\prod_{k=1}^n \pth{\prod_{j=1}^k\pth{1-\frac{\rho^2}{2}\pth{1+\cos\pth{\frac{2\pi j}{k}}}}}^{|\maC^\sfE_k|}\\&\ge (1-\rho^2)^{|\maC_1^\sfE|}\prod_{k=2}^{n}(1-\rho^2)^{k|\maC_k^\sfE|/2} = (1-\rho^2)^{\frac{1}{2}\pth{\binom{n}{2}+|\maC_1^\sfE|}},
\end{align*}
where the inequality follows from Lemma~\ref{lem:det_bound} and the last equality is because $\sum_{k=1}^n k|\maC_k^\sfE| = \binom{n}{2}$.
Therefore,\begin{align*}
    \expect{\exp\pth{\varphi(\rho)\pth{\frac{1}{2}\langle A,B^{\pi_1}\rangle+\frac{1}{2}\langle A,B^{\pi_2}\rangle-\langle A,B^{\pi^*}\rangle}}}\le (1-\rho^2)^{\frac{1}{4}\pth{\binom{n}{2}-|\maC_1^\sfE|}}.
\end{align*}
Similarly,\begin{align*}
    \expect{\exp\pth{\varphi(r)\pth{\frac{1}{2}\langle X,Y^{\pi_1}\rangle+\frac{1}{2}\langle X,Y^{\pi_2}\rangle-\langle X,Y^{\pi^*}\rangle}}}\le (1-r^2)^{\frac{\sfd(n-|\maC^\sfV_1|)}{4}}.
\end{align*}
For any $\pi_1\neq \pi_2\in\maT_2$ with $\sfd(\pi_1,\pi_2) = 3$.
Let $F\triangleq\sth{i\in[n]:\pi_1(i) = \pi_2(i)}$.
Then there exists $1\le i,j,k\le n$ with $i\neq j\neq k$ such that, \begin{align*}
 \pi_1(i) = \pi_2(j),\pi_1(j) = \pi_2(k),\pi_1(k)=\pi_2(i),\text{ and }\pi_1(\ell) = \pi_2(\ell) \text{ for any }\ell\in [n]\backslash\sth{i,j,k}.  
\end{align*}
Then,\begin{align*}
    |\maC_1^\sfV| &= |\sth{\ell\in [n]:\ell\notin \sth{i,j,k}}|=n-3,\\|\maC_1^\sfE|&=|(\ell_1,\ell_2):1\le \ell_1<\ell_2\le n,\ell_1,\ell_2\notin \sth{i,j,k}| =\binom{n-3}{2}.
\end{align*}


Consequently,\begin{align*}
    &~\prob{(G_1,G_2)\in\maE(\pi^*,\pi_1)\cap\maE(\pi^*,\pi_2)}\\\le&~\expect{\exp\pth{\varphi(\rho)\pth{\frac{1}{2}\langle A,B^{\pi_1}\rangle+\frac{1}{2}\langle A,B^{\pi_2}\rangle-\langle A,B^{\pi^*}\rangle}}}\\&~~~~\cdot\expect{\exp\pth{\varphi(r)\pth{\frac{1}{2}\langle X,Y^{\pi_1}\rangle+\frac{1}{2}\langle X,Y^{\pi_2}\rangle-\langle X,Y^{\pi^*}\rangle}}}\\\le&~(1-\rho^2)^{\frac{\binom{n}{2}-\binom{n-3}{2}}{4}}(1-r^2)^{\frac{3d}{4}}=(1-\rho^2)^{\frac{3(n-2)}{4}}(1-r^2)^{\frac{3d}{4}}.
\end{align*}

\section{Auxiliary Results}

\begin{lemma}\label{lem:kappa2bound}
    For $0<\rho^2<\frac{1}{10}$, we have
    $$-\frac{1-\rho^2}{2\rho^2}\log\left(1-\frac{\rho^2(2+\rho^2)}{(1-\rho^2)^2}\right)\le 1+4\rho^2.$$
\end{lemma}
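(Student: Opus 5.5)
Write $x=\rho^2\in(0,\tfrac1{10})$. The plan is to reduce the claim to an elementary inequality in $x$ and prove it by expanding the logarithm. Set
\[
u\triangleq \frac{x(2+x)}{(1-x)^2}.
\]
The claim is then equivalent to
\[
-\log(1-u)\le \frac{2x(1+4x)}{1-x}.
\]
First we check that the left-hand side is well defined. For $x<\tfrac1{10}$ we have $u\le \tfrac{0.1\cdot 2.1}{0.81}<0.26$, so $1-u>0$. A useful exact simplification is
\[
1-u=\frac{1-4x}{(1-x)^2},
\]
since $(1-x)^2-x(2+x)=1-4x$. The inequality therefore becomes
\[
2\log(1-x)-\log(1-4x)\le \frac{2x(1+4x)}{1-x}.
\]
The left side is $\log\frac{(1-x)^2}{1-4x}$, which is the natural closed form to work with.

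Next, expand the left side as a power series:
\[
-\log(1-4x)+2\log(1-x)=\sum_{m\ge1}\frac{4^m-2}{m}\,x^m=2x+7x^2+\tfrac{62}{3}x^3+\cdots.
\]
The right side expands as
\[
\frac{2x+8x^2}{1-x}=2x+10x^2+10x^3+\cdots.
\]
The linear terms agree, and the quadratic coefficient on the right (10) beats the left (7). The margin $3x^2$ must absorb the cubic and higher tail on the left. Crude bounds suffice here. For $m\ge3$ the coefficient satisfies $\frac{4^m-2}{m}\le \frac{4^m}{3}$, so the tail is at most
\[
\sum_{m\ge3}\frac{(4x)^m}{3}=\frac{64x^3}{3(1-4x)}\le \frac{64x^3}{3\cdot 0.6}.
\]
This is at most $3x^2$ provided $x\le \frac{3\cdot 1.8}{64}\approx 0.084$. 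Meanwhile the right side's higher terms are all nonnegative and can simply be dropped.

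The main obstacle is this last constant check: the crude tail bound covers only $x\le 0.084$, not the full range $x<0.1$. I would close the gap in one of two ways. The first is to keep the $x^3$ term of the right side, adding $10x^3$ of slack; the condition becomes $\frac{64x^3}{1.8}\le 3x^2+10x^3$, which holds for $x\le 0.1$ since $35.6x\le 3+10x$ needs only $x\le 0.117$. The second is to define
\[
g(x)=\frac{2x(1+4x)}{1-x}-\log\frac{(1-x)^2}{1-4x}
\]
and show $g(0)=0$ and $g'\ge0$ on $[0,\tfrac1{10}]$ by clearing denominators in $g'$ into a polynomial sign check. I expect the first route to work directly. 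Before relying on either route, I would sanity-check numerically that the inequality does hold at $x=0.1$: $\log(0.81/0.6)\approx 0.300$ versus $0.2\cdot 1.4/0.9\approx 0.311$.
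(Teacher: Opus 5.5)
Your proof is correct, and your first route already finishes it. For $x\le \frac1{10}$ you have $1-4x\ge 0.6$, so the tail is at most $\frac{64}{1.8}x^3\le 3x^2+10x^3$. The dropped terms $10x^m$ ($m\ge 4$) on the right are nonnegative, so the second route and the numerical check are not needed.

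The paper also sets $x=\rho^2$ and $u=\frac{x(2+x)}{(1-x)^2}$ and notes $1-u=\frac{1-4x}{(1-x)^2}$, but uses this identity only to confirm $1-u>0$ and to simplify. It then expands the logarithm in $u$ rather than in $x$, via $-\log(1-u)\le u+\frac{u^2}{2(1-u)}$, and clears denominators. This shows that the resulting upper bound on the left side, minus $1+4x$, equals exactly $\frac{3x(-21x^2+20x-2)}{4(1-x)(1-4x)}$. The sign is then settled by checking that the quadratic $-21x^2+20x-2$ is negative on $(0,\frac1{10}]$.

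Your route replaces that rational-function algebra with a coefficient comparison. Splitting $\log\frac{(1-x)^2}{1-4x}$ into two elementary series shows that the linear terms match exactly and that all the slack comes from the quadratic coefficient ($7$ versus $10$). Equivalently, the left side of the lemma is $1+\frac52x+O(x^2)$, so the constant $4$ is not tight. What remains is a geometric tail estimate. The paper's route in exchange reduces everything to one polynomial sign check, with no infinite tail beyond the single bound on $-\log(1-u)$.
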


\begin{proof}
Let $x \triangleq \rho^2 \in (0,\frac{1}{10})$ and define
$$
t \triangleq \frac{x(2+x)}{(1-x)^2}, \quad (0 < t < 1).
$$
Since $x < \frac{1}{4}$, we have
$$1 - t = \frac{(1-x)^2 - x(2+x)}{(1-x)^2} = \frac{1 - 4x}{(1-x)^2} > 0.$$
For any $0 < t < 1$,
$$
-\log(1-t) = \sum_{k=1}^\infty \frac{t^k}{k}
= t + \sum_{k=2}^\infty \frac{t^k}{k}
\le t + \frac12 \sum_{k=2}^\infty t^k
= t + \frac{t^2}{2(1-t)}.
$$
Consequently, we have
\begin{align*}
    -\frac{1-x}{2x} \log\!\left( 1 - \frac{x(2+x)}{(1-x)^2} \right)-(1+4x)
&\le \frac{1-x}{2x} \left( t + \frac{t^2}{2(1-t)} \right)-(1+4x)\\&= \frac{3x\,(-21x^2 + 20x - 2)}{4(4x^2 - 5x + 1)}.
\end{align*}
For $0 < x < \frac{1}{4}$, we have $4x^2-5x+1>0$.
The numerator factor
$f(x)= -21x^2 + 20x - 2$
is strictly increasing on $[0, \frac{1}{10}]$ (since $f'(x) = -42x + 20 > 0$), and
$f\pth{\frac{1}{10}} = -0.21 < 0.$
Thus $f(x) < 0$ for all $0 < x \le \frac{1}{10}$, making the whole fraction nonpositive. Therefore, for any $0<\rho^2<\frac{1}{10}$,
\[
-\frac{1-\rho^2}{2\rho^2} \log\!\left( 1 - \frac{\rho^2(2+\rho^2)}{(1-\rho^2)^2} \right)
\le 1 + 4\rho^2.\qedhere
\]
\end{proof}

\begin{lemma}[Chernoff's inequality for Chi-squared distribution]\label{lem:chisquare}
    Suppose $\xi$ follows the chi-squared distribution with $n$ degrees of freedom. Then, for any  $\delta>0$, \begin{align}\label{eq:concentration_for_chisquare}
        \prob{\xi>(1+\delta) n}&\le  \exp\pth{-\frac{n}{2}\pth{\delta-\log\pth{1+\delta}}},\\\prob{\xi<(1-\delta)n}&\le \exp\pth{-\frac{\delta^2}{4}n}.
    \end{align}
\end{lemma}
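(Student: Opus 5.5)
The plan is to apply the Cram\'er--Chernoff method with the chi-squared moment generating function, and then optimize over the free parameter. Write $\xi=\sum_{i=1}^n Z_i^2$ with $Z_i$ i.i.d.\ standard normal. For $0<\lambda<1/2$ we have $\expect{e^{\lambda Z_i^2}}=(1-2\lambda)^{-1/2}$, so
\begin{align*}
\prob{\xi>(1+\delta)n}\le \exp\pth{-\lambda(1+\delta)n-\frac n2\log(1-2\lambda)}.
\end{align*}
Minimizing the exponent over $\lambda$ gives $1-2\lambda=\frac{1}{1+\delta}$, that is, $\lambda=\frac{\delta}{2(1+\delta)}$. Substituting this value, the exponent becomes $-\frac{n}{2}\delta+\frac n2\log(1+\delta)$, which is exactly the first bound.

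For the lower tail, take $\lambda>0$ and use $\expect{e^{-\lambda Z_i^2}}=(1+2\lambda)^{-1/2}$. This gives
\begin{align*}
\prob{\xi<(1-\delta)n}\le \exp\pth{\lambda(1-\delta)n-\frac n2\log(1+2\lambda)}.
\end{align*}
We may assume $\delta<1$, since for $\delta\ge 1$ the probability is zero. The optimal choice is $1+2\lambda=\frac1{1-\delta}$, which yields the exponent $\frac n2\pth{\delta+\log(1-\delta)}$.

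It remains to show $\delta+\log(1-\delta)\le -\frac{\delta^2}{2}$ for $\delta\in(0,1)$. This follows from the series expansion $\log(1-\delta)=-\sum_{k\ge1}\delta^k/k\le -\delta-\delta^2/2$. Hence the exponent is at most $-\frac{n\delta^2}{4}$, which is the second bound.

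The only step that needs any care is this elementary inequality for the lower tail, together with checking that the chosen $\lambda$ lies in the valid range of the MGF: $\lambda<1/2$ for the upper tail, and $\lambda>0$ (which requires $\delta<1$) for the lower tail. Both are immediate. Everything else is routine: the argument is a standard Chernoff computation and needs no earlier result from the paper.
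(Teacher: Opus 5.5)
Your proof is correct: the Cram\'er--Chernoff bounds with the optimizers $\lambda=\delta/(2(1+\delta))$ and $\lambda=\delta/(2(1-\delta))$, the observation that the lower tail vanishes for $\delta\ge 1$, and the inequality $\log(1-\delta)\le-\delta-\delta^2/2$ give exactly the two stated bounds. The paper does not prove the lemma itself but only cites Ghosh (2021, Theorems 1 and 2), which rest on the same MGF-based Chernoff argument, so your derivation is a complete self-contained substitute for that citation.
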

\begin{proof}
    The result follows from \citet[Theorems 1 and 2]{ghosh2021exponential}.
\end{proof}

\begin{lemma}\label{lem:det_bound}
For any integer $k \ge 1$ and any $0 \le \rho \le 1$, we have
\[
\prod_{j=1}^{k}\left(\left(1-\frac{\rho^2}{2}\right)-\frac{\rho^2}{2}\cos\frac{2\pi j}{k}\right)
\;\;\geq\;\;(1-\rho^2)^{k/2}.
\]
\end{lemma}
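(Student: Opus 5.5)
Set $\theta_j=2\pi j/k$, so the $j$-th factor is $1-\frac{\rho^2}{2}(1+\cos\theta_j)=1-\rho^2\cos^2(\theta_j/2)$. Each factor lies in $[1-\rho^2,1]$, hence is nonnegative. It therefore suffices to show
\[
\sum_{j=1}^k \log\bigl(1-\rho^2\cos^2(\pi j/k)\bigr)\ \ge\ \frac{k}{2}\log(1-\rho^2).
\]
For $\rho=1$ both sides can vanish or be $-\infty$; we treat $\rho=1$ by continuity, or directly: the right-hand side is $0$ and the left-hand side is a product of nonnegative numbers, so the inequality is trivial. Assume $0\le\rho<1$ from now on.

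The key step is concavity. The function $g(u)=\log(1-\rho^2u)$ is concave on $[0,1]$. For $u\in[0,1]$ it therefore lies above its chord:
\[
g(u)\ \ge\ (1-u)\,g(0)+u\,g(1)=u\log(1-\rho^2).
\]
Apply this with $u_j=\cos^2(\pi j/k)\in[0,1]$ and sum over $j$. This gives
\[
\sum_{j=1}^k g(u_j)\ \ge\ \Bigl(\sum_{j=1}^k\cos^2(\pi j/k)\Bigr)\log(1-\rho^2).
\]
Since $\log(1-\rho^2)\le 0$, it remains to check that $\sum_j\cos^2(\pi j/k)\le k/2$.

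Compute the sum using $\cos^2(\pi j/k)=\frac12\bigl(1+\cos(2\pi j/k)\bigr)$, so that
\[
\sum_{j=1}^k\cos^2(\pi j/k)=\frac{k}{2}+\frac12\sum_{j=1}^k\cos(2\pi j/k).
\]
For $k\ge 2$ the last sum vanishes, as a sum of the real parts of all $k$-th roots of unity. This gives exactly $k/2$, and the lemma follows.

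The only real obstacle is the case $k=1$. There $\cos(2\pi)=1$, so the single factor is $1-\rho^2$, while the claimed bound is $(1-\rho^2)^{1/2}$, which is larger. So the inequality as stated fails for $k=1$ whenever $0<\rho<1$.

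I would handle this as follows. State and prove the lemma for $k\ge 2$. For $k=1$ record the exact value $1-\rho^2$. This matches how the lemma is used in the proof of Lemma~\ref{lem:d=3}: there the $k=1$ orbits are bounded separately by $(1-\rho^2)^{|\maC_1^\sfE|}$, and the lemma is invoked only for $k\ge 2$. So the downstream determinant bound is unaffected.
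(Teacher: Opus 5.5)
Your proof is correct and follows the paper's argument: the paper also rewrites each factor as $1-\rho^2\cos^2(\pi j/k)$, applies the chord bound $1-at\ge(1-a)^t$ from concavity of $t\mapsto\ln(1-at)$, and uses $\sum_{j=1}^k\cos^2(\pi j/k)=k/2$. Your $k=1$ caveat is also correct: the paper's identity $\sum_{j=1}^k\cos(2\pi j/k)=0$ fails at $k=1$, where the product equals $1-\rho^2<(1-\rho^2)^{1/2}$ for $0<\rho<1$, so the lemma as stated is false there. As you observe, the proof of Lemma~\ref{lem:d=3} handles the length-one orbits separately and only needs the bound for $k\ge 2$, so nothing downstream is affected.
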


\begin{proof}
We note that
\[
\left(1-\frac{\rho^2}{2}\right)-\frac{\rho^2}{2}\cos\frac{2\pi j}{k}
=1-\rho^2 \cos^2\left(\frac{\pi j}{k}\right),
\]
so the product equals
\[
P=\prod_{j=1}^k \Bigl(1-\rho^2\cos^2(\frac{\pi j}{k})\Bigr).
\]
For any $a,t\in[0,1]$, we have $1-at \ge (1-a)^t$, which follows from concavity of $g(t)=\ln(1-at)-t\ln(1-a)$ and $g(0)=g(1)=0$. Applying this with $a=\rho^2$ and $t=\cos^2(\pi j/k)$ yields
\[
1-\rho^2\cos^2(\frac{\pi j}{k}) \ge (1-\rho^2)^{\cos^2(\pi j/k)}.
\]
Multiplying over $j=1,\ldots,k$ gives 
\[
P\ge (1-\rho^2)^{\sum_{j=1}^k \cos^2(\pi j/k)}.
\]
Since $\sum_{j=1}^k \cos^2(\pi j/k)=\sum_{j=1}^k\frac{1+\cos(2\pi j/k)}{2}=\frac{k}{2}$, we conclude that
\[
P=\prod_{j=1}^k\pth{\left(1-\frac{\rho^2}{2}\right)-\frac{\rho^2}{2}\cos\frac{2\pi j}{k} }\ge(1-\rho^2)^{k/2}.
\]
\end{proof}

\bibliographystyle{alpha}
\bibliography{main}

\end{document}